\documentclass[11pt]{amsart} 

\usepackage{stmaryrd}
\usepackage{tikz-cd}
\usepackage[color=orange!10]{todonotes}
\usepackage{paralist}
\usepackage{amsthm}
\usepackage[british]{babel}
\usepackage{csquotes}
\usepackage{enumitem}
\usepackage{mathtools, mathdots}

\usepackage[width=5.65in,height=8.0in,centering]{geometry}
\definecolor{chianti}{rgb}{0.6,0,0}
\definecolor{meretale}{rgb}{0,0,.6}
\definecolor{leaf}{rgb}{0,.35,0}
\usepackage[colorlinks,plainpages,backref,urlcolor=meretale,citecolor=meretale,linkcolor=chianti]{hyperref}

\usepackage{tikz}
\usetikzlibrary{arrows,cd}

\usepackage{mathrsfs}
\newcommand{\mathds}{\mathbb}

\usepackage{amsmath,amssymb,amsthm}
\usepackage{ytableau}
\usepackage{thmtools,thm-restate,colonequals}
\theoremstyle{definition}

\newcommand{\myendsymbol}{\ensuremath{\diamondsuit}}

\numberwithin{equation}{section}

\declaretheorem[
  style=definition,
  title=Example,
  qed={$\myendsymbol$},
  refname={example,examples},
  Refname={Example,Examples},
  sharenumber=bla,
]{exa}

\declaretheorem[
  style=definition,
  title=Definition,
  qed={$\myendsymbol$},
  sharenumber=bla,
]{dfn}

\declaretheorem[
  style=definition,
  title=Remark,
  qed={$\myendsymbol$},
  sharenumber=bla,
]{rmk}

\declaretheorem[
  style=plain,
  title=Lemma,
  qed={},
  sharenumber=bla,
]{lem}

\declaretheorem[
  style=plain,
  title=Theorem,
  qed={},
  sharenumber=bla,
]{thmx}

\declaretheorem[
  style=plain,
  title=Corollary,
  qed={},
  sharenumber=bla,
]{cor}

\declaretheorem[
  style=plain,
  title=Proposition,
  sharenumber=bla,
]{prp}

\declaretheorem[
  style=definition,
  title=Question,
  qed={$\myendsymbol$},
  sharenumber=bla,
]{question}

\declaretheorem[
  style=definition,
  title=Setup,
  qed={$\myendsymbol$},
  sharenumber=bla,
]{setup}

\declaretheorem[
  style=plain,
  title=Theorem,
]{maintheorem}

\declaretheorem[
  style=definition,
  title=Question,
  qed={$\myendsymbol$},
  sharenumber=maintheorem,
]{mainquestion}

\makeatletter
\def\tagform@#1{\maketag@@@{(\ignorespaces\textbf{#1}\unskip\@@italiccorr)}}
\renewcommand{\eqref}[1]{\textup{{\normalfont(\ref{#1}}\normalfont)}}
\makeatother

\DeclareMathOperator{\codim}{codim}   
\DeclareMathOperator{\rank}{rank}     
\DeclareMathOperator{\cptdim}{cptdim} 
\DeclareMathOperator{\RI}{RI} 
\DeclareMathOperator{\ara}{ara} 
\DeclareMathOperator{\lcd}{lcd} 
\DeclareMathOperator{\height}{ht} 
\DeclareMathOperator{\init}{in} 
\DeclareMathOperator{\sign}{sign} 
\DeclareMathOperator{\chr}{char} 

\DeclareMathOperator{\grade}{grade}
\DeclareMathOperator{\Fitt}{Fitt} 
\DeclareMathOperator{\tr}{tr} 

\newcommand{\into}{\hookrightarrow}

\newcommand{\lto}{\longrightarrow}

\newcommand{\Hc}{H_{\text{c,\'et}}}

\newcommand{\KK}{\mathds{K}}     
\newcommand{\NN}{\mathds{N}}     
\newcommand{\QQ}{\mathds{Q}}     
\newcommand{\ZZ}{{\mathds{Z}}}   

\newcommand{\fraka}{{\mathfrak a}}

\newcommand{\frakm}{{\mathfrak m}}
\newcommand{\frakp}{{\mathfrak p}}

\newcommand{\et}{\mathrm{\acute{e}t}}

\tikzset{square matrix/.style={
    matrix of nodes,
    column sep=-\pgflinewidth, row sep=-\pgflinewidth,
    nodes={draw,
      minimum height=0.5cm,
      anchor=center,
      text width=0.6cm,
      align=center,
      inner sep=0pt
    },
  },
  square matrix/.default=0.55cm
}

\title[arithmetic rank and cohomological dimension of residual intersections]{On the arithmetic rank and cohomological dimension of some residual intersections}

\author[Batavia]{Manav Batavia}
\address{Department of Mathematics, Purdue University, West Lafayette, IN~47907, USA}
\email{\href{mailto:mbatavia@purdue.edu}{mbatavia@purdue.edu}}
\thanks{}

\author[Pandey]{Vaibhav Pandey}
\address{Department of Mathematics, Indian Institute of Technology Madras, Chennai, Tamil Nadu~600036, India}
\email{\href{mailto:vaibhavpandey@iitm.ac.in}{vaibhavpandey@iitm.ac.in}}
\thanks{}

\author[Walther]{Uli Walther}
\address{Department of Mathematics, Purdue University, West Lafayette, IN~47907, USA}
\email{\href{mailto:walther@purdue.edu}{walther@purdue.edu}}
\thanks{}

\keywords{arithmetic rank, local cohomology, residual intersection, algebra with a straightening law, $D$-module}

\subjclass[2020]{Primary 13C40, 13D45; Secondary 13A35, 13N10, 14F20}

\begin{document}
\begin{abstract}
We determine the arithmetic rank and cohomological dimension of each generic residual intersection of the ideal of submaximal Pfaffians of an alternating matrix of variables. This ideal fails to be a set-theoretic complete intersection. Our methods yield several structural results: in characteristic zero, we compute the $D$-module length of the top local cohomology module supported at this ideal and show that over the integers, it admits a $\QQ$-vector space structure. In particular, the top local cohomology module has no integer torsion. We also establish the $F$-regularity of the generic residual intersection rings considered here.

Further, for each admissible height $s$, we construct $s$-residual intersections of complete intersection ideals and height three Gorenstein ideals that are set-theoretic complete intersections in any characteristic. Finally, we pose a general question of independent interest concerning the behaviour of arithmetic rank.
\end{abstract}
\maketitle
\tableofcontents

\section{Introduction}

The \emph{arithmetic rank} of an ideal $I$ of a Noetherian ring $R$ is the least number of elements required to generate it up to radical, that is, 
\[
\ara(I)\colonequals \min\{k\geq 0:\text{there exist $f_{1},\cdots,f_{k}\in R$ such that $\sqrt{(f_{1}, \ldots, f_{k})} = \sqrt{I}$}\}.
\]
Note that when $R$ is a polynomial ring in finitely many variables over an algebraically closed field, $\ara(I)$ is the minimal number of equations needed to cut out the algebraic set $V(I)=V(\sqrt{I})$. The ideal $I$ is said to be a \emph{set-theoretic complete intersection} if $\ara(I)=\height(I)$.

Arithmetic rank is a notoriously difficult algebro-geometric invariant to compute. Its determination has motivated a long line of work including the foundational contributions of Cowsik--Nori \cite{CowsikNori} and Hartshorne \cite{HartshorneCI}. Subsequent computations of arithmetic rank for several families of ideals such as \cite{LyubeznikBAMS,BrunsSchwanzl,Barile,BarileLyubeznik} have illustrated
the rich interplay between constructing explicit set-theoretic generators, combinatorial connections, and cohomological obstructions.

Local cohomology provides a fundamental lower bound for arithmetic rank. Recall that the \emph{(local) cohomological dimension} of $I$ in $R$ is 
\[
\lcd(I)\colonequals\max\{n\:|\:H^n_I(R)\neq 0\}.
\]
We always have 
\[\height(I) \leq \lcd(I) \leq \ara(I).\]

We compute the arithmetic rank of generic residual intersections of height three Gorenstein ideals and study their top nonvanishing local cohomology modules. Residual intersections originated in classical enumerative geometry, notably in Chasles' 1864 solution to Steiner's problem of counting plane conics tangent to five general plane conics \cite{Chasles}. The modern algebraic theory of residual intersections began with Artin and Nagata \cite{Artin-Nagata}. Central to residual intersection theory is computing the generators of residual intersections and determining if it is Cohen--Macaulay. These questions have motivated an extensive body of research, see \cite{HunekeRI,HunekeUlrichRI,
HunekeUlrichGenericRI,UlrichANproperties,CEU,CNT,
EisenbudUlrichRI}. Within this theory, the residual intersections of height three Gorenstein ideals are particularly well-understood. The generators of these residual intersections have been computed explicitly, which we recall next. This provides the motivation for our computation of their set-theoretic generators.    

Let $\KK$ be a field or the integers $\ZZ$; let $n\geq2$ and $s\geq3$. Let $X$ and $Y$ be generic alternating $n\times n$ and generic $n\times s$ matrices, respectively. Set $R=\KK[X,Y]$ and
\[
M=\begin{pmatrix}
X&Y\\
-Y^{\tr} &0
\end{pmatrix},
\]
where $0$ indicates the zero matrix of size $s \times s$. Define $P(n,s)$ to be the ideal generated by the Pfaffians of the principal submatrices of $M$ containing $X$. 

When $n$ is odd, the signed submaximal Pfaffians $p_1,\ldots,p_n$ of $X$ generate a generic Gorenstein ideal $I$ of height three \cite[Theorem 2.1]{BuchsbaumEisenbudAJM}. Kustin and Ulrich \cite[Theorem 4.7]{KustinUlrich} proved that the $s$-residual intersection of $I$ is precisely $P(n,s)$, that is,
\[
\bigl((p_1,\ldots,p_n)Y\bigr):IR=P(n,s).
\]
Observe that when $n$ is even, the generators of $P(n,s)$ still make sense, though in this case this ideal is not a residual intersection. Our computations for odd $n$ require us to work with the larger family $P(n,s)$ allowing both parities, since our arguments pass between even and odd values of $n$. 

If $n$ is odd, by \cite[Theorem 3.3]{HunekeUlrichRI}, $P(n,s)$ is a prime ideal (of height $s$) and $R/P(n,s)$ is a Cohen--Macaulay ring by \cite[Theorem 1.4(b)]{HunekeUlrichGenericRI}. It follows from a well-known result of \cite{PeskineSzpiro} that $\lcd(P(n,s))=s$ if $\KK$ has positive characteristic. The free resolution of $P(n,s)$ is worked out in \cite{Kustin-Ulrich-Memoirs}. 

Our first main result determines the arithmetic rank and cohomological dimension of the ideal $P(n,s)$.

\begin{maintheorem}\label{mainthm:A}(cf. Theorems \ref{thm:cdP(n,s)} and \ref{thm:ara-in-char-p})
For all $s\geq 3$ and $n\geq 2$, we have
\[
\ara(P(n,s))=\binom{s}{2}-\binom{s-n}{2}+1.
\] 
Furthermore, if $\KK$ has characteristic $0$, then $\lcd(P(n,s))=\ara(P(n,s))$.
\end{maintheorem}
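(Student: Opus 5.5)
The plan is to prove the two inequalities
\[
\lcd(P(n,s))\leq \ara(P(n,s))\leq \binom{s}{2}-\binom{s-n}{2}+1
\]
in general, then a matching lower bound: on $\lcd$ in characteristic $0$, and on $\ara$ directly (through étale cohomology) in positive characteristic, where $\lcd=s$ is useless.

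\textbf{Upper bound.} The generators of $P(n,s)$ are indexed by subsets $J\subseteq\{1,\ldots,s\}$ of columns of $Y$ with $n+|J|$ even: the generator is the Pfaffian of the principal submatrix of $M$ on $X$ together with the rows and columns indexed by $J$. Expanding such a Pfaffian along the $Y$-block writes it as a bilinear combination of Pfaffians of $X$ and minors of $Y$. I would put these generators, or suitable products of Pfaffians of $X$ and minors of $Y$, into a poset $\Pi$ carrying a straightening-law (ASL) structure on $R/P(n,s)$ or on an ambient ASL. The aim is to show that $P(n,s)$ is, up to radical, the ideal generated by an order ideal of $\Pi$. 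Then the standard Bruns--Vetter/Bruns--Schwänzl argument gives set-theoretic generators: for each rank $r$, take the sum of the poset elements of rank $r$. This bounds $\ara$ by the rank of the order ideal plus one.

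The key point is then a combinatorial count showing this rank is $\binom{s}{2}-\binom{s-n}{2}$. Heuristically, a maximal chain moves through pairs of $Y$-columns, and only pairs meeting the first $n$ columns contribute, which gives $\binom{s}{2}-\binom{s-n}{2}$.

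\textbf{Lower bound on $\lcd$ in characteristic $0$.} I would argue by induction, passing between the parities of $n$. Inverting an entry of $X$, say $x_{12}$, and performing row and column operations on $M$ should give
\[
P(n,s)_{x_{12}}\cong P(n-2,s)\ \text{(in new variables) extended to a localized polynomial ring}.
\]
Inverting an entry of $Y$ should similarly relate $P(n,s)$ to $P(n-1,s-1)$ plus an extra generator. A Mayer--Vietoris or localization long exact sequence, of the form
\[
\cdots\to H^i_{P}(R)\to H^i_{P}(R_{y})\to H^{i+1}_{P+(y)}(R)\to\cdots,
\]
then transports nonvanishing from smaller cases up by one degree at a time. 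The base cases are $n=2$, where $P(2,s)$ is, up to radical, linked to the ideal of $2$-minors of $Y$ together with $x_{12}$, and $n\geq s$. For these I would use known values of $\lcd$ for generic determinantal and Pfaffian ideals in characteristic $0$ (Bruns--Schwänzl, Barile), or the de Rham and singular-cohomology comparison for the complement of $V(P(n,s))$ in $\mathbb{C}^N$.

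\textbf{Lower bound on $\ara$ in positive characteristic.} Here I would follow Bruns--Schwänzl: compute the étale cohomology with compact support of the complement $\mathbb{A}^N\setminus V(P(n,s))$ and show it is nonzero in the degree that forces at least $\binom{s}{2}-\binom{s-n}{2}+1$ equations. The same geometric stratification as in the characteristic-$0$ induction (localize at $x_{12}$, at $y_{ij}$) feeds long exact sequences in compactly supported cohomology.

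The main obstacle I expect is the lower bound. The localization isomorphisms must be verified carefully, and one has to show that the connecting maps in the long exact sequences do not kill the top class. I would first work out the small cases $n=2,3$ with $s=3$ to fix the degree bookkeeping, and only then attempt the general induction.
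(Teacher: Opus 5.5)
\textbf{Upper bound.} Your upper bound follows the paper's route in outline: a straightening law on the algebra generated by the Pfaffian generators, then rank sums via Bruns--Vetter. You leave the straightening law itself unproved. The paper derives it from a Kustin Pfaffian identity and shows the ASL is even \emph{symmetric}, and that symmetry is exactly what the lower bound needs.

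\textbf{The gap: the lower bound for $s>n$.} Localization only yields monotonicity.
Inverting $x_{12}$ does in fact identify $P(n,s)$ with an extension of $P(n-2,s)$. The Schur complement has a nonzero bottom-right block, but the resulting Pfaffians are a unitriangular combination of the generators of $P(n-2,s)$ in new variables.
Inverting $y_{n,s}$ identifies $P(n,s)$ with an extension of $P(n-1,s)$ itself, with the $n$-th column of $X$ becoming a new column of $Y$. It is not $P(n-1,s-1)$ plus one generator.
Either way you only get $\lcd(P(n,s))\geq\lcd(P(n-1,s))$ or $\lcd(P(n,s))\geq\lcd(P(n-2,s))$. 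That suffices for $s\leq n$, where $d(n,s)=\binom{s}{2}+1$ does not depend on $n$; this is the paper's Case 2.
For $s>n$, however, $d(n,s)-d(n-1,s)=s-n$. The sequence $H^i_P(R)\to H^i_P(R_y)\to H^{i+1}_{P+(y)}(R)$ cannot raise the degree for $P$ by itself. It only trades information with $P+(y)$, whose cohomological dimension you have no handle on.
Your base cases $n\geq s$ do not help either. There $P(n,s)$ is not a classical determinantal or Pfaffian ideal, and localization transports lower bounds upward in $n$, never down to $n<s$.

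\textbf{The missing idea.} Compare $V(P)$ with the linear space $V(X)$ via Mayer--Vietoris.
\begin{itemize}
\item Writing $P=I(n,s)+I_n(Y)$ with $I(n,s)\subseteq(X)$ gives $P+(X)=(X)+I_n(Y)$. By Bruns--Schw\"anzl its local (resp.\ \'etale) cohomological obstruction sits exactly in degree $\binom{n}{2}+n(s-n)+1=d(n,s)$.
\item On the other side, $P\cap(X)=\sqrt{I(n,s)}$. This rests on the Laplace-expansion identity $x_{pq}\,[i_1,\ldots,i_n]\in I(n,s)$.
\item The non-minor generators form an order ideal for the \emph{reversed} order. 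Symmetry of the ASL therefore gives $\ara(I(n,s))\leq d(n-2,s)<d(n,s)-1$.
\item Set $d=d(n,s)$ and note $\lcd((X))=\binom{n}{2}<d$. The segment $H^{d-1}_{I(n,s)}(R)\to H^{d}_{(X)+I_n(Y)}(R)\to H^{d}_{P}(R)\oplus H^{d}_{(X)}(R)\to H^{d}_{I(n,s)}(R)$ then gives $H^d_P(R)\cong H^d_{(X)+I_n(Y)}(R)\neq 0$.
\item The same Mayer--Vietoris argument in \'etale cohomology, for the complements $\KK^N\smallsetminus V(P)$ and $\KK^N\smallsetminus V(X)$, handles positive characteristic.
\end{itemize}
Only once this is known for $n=s-1$ does localization carry the bound to all $n\geq s$.
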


Our computation extends the study of generic residual intersections of complete intersection ideals in \cite{BMSMP}. For the upper bound, we equip the set $\mathcal P$ of Pfaffian generators with a partial order and prove that the auxiliary algebra $\KK[\mathcal P]$ is a\textit{ symmetric algebra with straightening law (ASL)}. This gives the explicit set-theoretic generators of $P(n,s)$ (see Corollary \ref{cor:dimauxring}).

The (cohomological) lower bound requires a different approach from \cite{BMSMP}. Rather than deriving it from invariant theory, we carefully analyse the Pfaffian generators of $P(n,s)$. Two key observations form the backbone of our computations:
\begin{itemize}[leftmargin=*, align=left]
    \item The ideal $P(n,s)\cap(X)$ is generated up to radical by a subset of $\mathcal P$ that forms an ideal of the poset structure on $\mathcal P$ with the reversed order (Lemma \ref{lem:PintX}).
    \item Localization at an entry of $Y$
    identifies $P(n,s)$ with an extension of $P(n-1,s)$
    (Lemma \ref{lem:localization}), thus interlacing the cases of even and odd $n$.
\end{itemize}

The usefulness of Lemma \ref{lem:localization} goes beyond our computations of arithmetic rank and cohomological dimension. We use it to prove that if $\KK$ is an $F$-finite field of positive characteristic, then the generic $s$-residual intersection ring $R/P(n,s)$ is strongly $F$-regular (Theorem \ref{thm:P(n,s)Freg}). This extends \cite[Theorem 4.3]{Pandey} where the $F$-regularity is established for the\textit{ generic link}, that is, the case $s=3$. In addition, we prove structural results about the top nonvanishing local cohomology supported at $P(n,s)$. Before stating our second main result, we recall that quite generally, a local cohomology module supported at an ideal of a polynomial ring $R$ over a field of characteristic zero is known to be a \textit{holonomic} module over the Weyl algebra $D$, and hence has finite length as a $D$-module. We prove:

\begin{maintheorem}\label{mainthm:B}(cf. Theorem \ref{thm:Dmodlen} and Corollary \ref{cor:integertorsion})
Set $P=P(n,s)$ in $R=\KK[X,Y]$. If $\KK$ is a field of characteristic zero, the $D$-module length of the local cohomology module $H^{\ara(P)}_P(R)$ is as follows: 
\[
\ell_D(H^{\ara(P)}_P(R)) = 
\begin{cases} 
1 & \text{if } s > n, \\ 
2 & \text{if } s = n. 
\end{cases}
\]
If $\KK=\ZZ$, then $H^{\ara(P)}_P(R)$ is a $\QQ$-vector space; in particular, it has no integer torsion.
\end{maintheorem}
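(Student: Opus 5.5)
The proof rests on two tools. The first is the localization Lemma~\ref{lem:localization}, which identifies $P(n,s)$ with an extension of $P(n-1,s)$ after inverting an entry $y$ of $Y$. The second is the Mayer--Vietoris sequence coming from Lemma~\ref{lem:PintX}, which describes $P\cap(X)$ up to radical. Write $c=\ara(P(n,s))=\binom{s}{2}-\binom{s-n}{2}+1$. By Theorem~\ref{mainthm:A}, in characteristic zero $H^c_P(R)\neq 0$ and $c=\lcd(P)$. The plan is to show that $H^c_P(R)$ is supported only on the most degenerate locus, and then to identify it with a known module.

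\textbf{Localization step.} After inverting $y$, $PR_y$ becomes the extension of $P(n-1,s)$ to a polynomial ring over $R_y$ in additional variables. The corresponding arithmetic rank is $\binom{s}{2}-\binom{s-n+1}{2}+1$, which is smaller than $c$ by $s-n$ when $s>n$, and equal to $c$ when $s=n$, since then the term $\binom{s-n}{2}$ vanishes. Hence:
\begin{itemize}
\item If $s>n$, then $H^c_P(R)_y=0$ for every entry $y$ of $Y$, so $H^c_P(R)$ is supported in $V(Y)$. On $V(Y)$ every Pfaffian involving $Y$ vanishes, and $P$ restricts to the ideal generated up to radical by the generators living in $(X)$.
\item If $s=n$, the localized top module is nonzero. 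It is the top cohomology of $P(n-1,n)$ with $s>n-1$, so by induction it has $D$-length one.
\end{itemize}

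\textbf{Identifying the module.} Next I use Lemma~\ref{lem:PintX} together with the Mayer--Vietoris sequence
\[
H^{c-1}_{P\cap(X)}\to H^{c}_{P+(X)}\to \cdots .
\]
The aim is to show that the top module supported in $V(Y)$, or in $V(X,Y)$, is $H^{ns+\binom n2}_{(X,Y)}(R)$, which is the injective hull of $\KK[X,Y]$ modulo its variables. This module is simple as a $D$-module. The required fact is that $H^c_P(R)\cong H^{N}_{\mathfrak m}(R)$-type modules supported at the origin, or at the variables. This gives $\ell_D=1$ when $s>n$.

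When $s=n$, there is an exact sequence
\[
0\to H^0_{(Y)}(H^c_P(R))\to H^c_P(R)\to \check{C}ech\text{-type localization}.
\]
The sub-object supported in $V(Y)$ has length one by the same argument. The quotient injects into the localized module, which is simple, and it is nonzero by the localization computation. Holonomicity then gives total length $2$, provided both pieces are nonzero. For the first piece, I would exhibit a nonzero class, for instance by showing that the Čech class $1/\prod f_i$ restricts nontrivially after killing $Y$.

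\textbf{Over $\ZZ$.} For each prime $p$, consider the sequence
\[
0\to R\xrightarrow{p}R\to R/pR\to 0.
\]
It gives
\[
H^{c}_P(R)\xrightarrow{p}H^c_P(R)\to H^c_P(R/pR)\to 0 .
\]
By Peskine--Szpiro, $H^c_P(R/pR)=0$ for $c>s$, so multiplication by $p$ on $H^c_P(R)$ is surjective. Injectivity needs $H^{c-1}_P(R/pR)=0$, which holds whenever $c-1>s$. When $c-1=s$, I would use instead that $c$ is the top degree, so that by right exactness of top local cohomology $H^c_P(R)\otimes R/pR=H^c_{P}(R/pR)=0$. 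Torsion, meanwhile, comes from the image of $H^{c-1}_P(R/pR)$, and I would show it vanishes using the localization argument over $\ZZ$ and the Mayer--Vietoris sequence, which are characteristic-free.

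\textbf{Main obstacle.} The main obstacle is the $s=n$ case: showing that the piece supported on $V(Y)$ is nonzero and simple, and in particular pinning down precisely which simple module it is.
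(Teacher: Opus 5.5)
\textbf{Main gap: the case $s=n$.} Here your argument fails, because the structure you posit is upside down.

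By Lemma \ref{lem:PintX} and Mayer--Vietoris (using $\ara I(n,n)\le d(n-2,n)=c-1$ and $\lcd(X)<c$), $H^c_P(R)$ is a quotient of $H^c_{I_n(Y)+(X)}(R)\cong H^1_{\det Y}(\KK[Y])\otimes_\KK E_X$, where $E_X=H^{\binom n2}_{(X)}(\KK[X])$. Moreover, $H^1_{\det Y}(\KK[Y])$ is uniserial, $M_1\subset\cdots\subset M_n$, with $M_i/M_{i-1}$ supported on $V(I_{n-i+1}(Y))$ \cite{RWW}. Hence $H^c_P(R)\cong (M_n/M_j)\otimes E_X$ for some $j$. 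Its only simple submodule is $M_{j+1}/M_j$, and the factor supported in $V(Y)$, namely $M_n/M_{n-1}$, is the \emph{head}, not a submodule. Since $H^c_P(R)_{y_{n,s}}\neq 0$ forces $j\le n-2$, the socle is not supported in $V(Y)$. Therefore $\Gamma_{(Y)}(H^c_P(R))=0$, and the nonzero class you plan to exhibit does not exist.

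Your upper bound is also unjustified. The localization $H^c_P(R)_y$ is simple over $D_{R_y}$, not over $D_R$. In fact $H^c_P(R)$ itself embeds in $H^c_P(R)_{y_{n,s}}$, yet it has length $2$.

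What is missing is the paper's combination of ingredients:
\begin{itemize}
\item the Mayer--Vietoris surjection;
\item uniseriality of $H^1_{\det Y}$;
\item non-vanishing after inverting $y_{n,s}$, which rules out $M_n/M_{n-1}$;
\item vanishing after further inverting a $2$-minor $[i,n\mid j,s]_Y$, which reduces to $P(n-2,n)$ with $d(n-2,n)=c-1$ and so rules out $j<n-2$.
\end{itemize}
Together these pin down $M_n/M_{n-2}$.

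\textbf{The case $s>n$.} Your outline lands on the right module, but the ``required fact'' is the whole content. Support at the origin only yields a finite direct sum of copies of $E$. To get a single copy you need the Mayer--Vietoris isomorphism $H^c_P(R)\cong H^c_{I_n(Y)+(X)}(R)$, the K\"unneth/Kashiwara reduction, and Witt's theorem \cite{Witt2012} that the top local cohomology of maximal minors of a non-square generic matrix is simple.

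\textbf{The case $\KK=\ZZ$.} Your Peskine--Szpiro argument is a genuinely more elementary route than the paper's, and it works whenever $c\ge s+2$. This holds for all $s\ge 4$, since then $c\ge 2s-2$. In that range both $H^{c-1}_P(R/pR)$ and $H^c_P(R/pR)$ vanish, so $p$ acts bijectively.

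For $s=3$, however, $c=4=s+1$ for every $n$. Then $H^{c-1}_P(R/pR)=H^s_P(R/pR)\neq 0$, and the $p$-torsion of $H^c_P(R)$ is exactly the image of the connecting map out of it. Your right-exactness remark only reproves $H^c_P(R)=pH^c_P(R)$, which is divisibility, not torsion-freeness. The appeal to ``localization and Mayer--Vietoris over $\ZZ$'' is not an argument either: for $s=3\le n$, Mayer--Vietoris gives no isomorphism, and localization would at best show that the torsion is supported in $V(Y)$.

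The paper handles all $s$ uniformly. It proves strong $F$-regularity of $\mathbb{F}_p[X,Y]/P(n,s)$ (Theorem \ref{thm:P(n,s)Freg}, via Lemma \ref{lem:localization} and an explicit regular sequence) and feeds this into \cite[Theorem 1.3]{JPSW}. Some input of this kind, controlling the connecting map $H^s_P(R/pR)\to H^{s+1}_P(R)$, is needed to complete your $s=3$ case.
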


Theorem \ref{mainthm:A} and \cite[Theorem 4.7]{BMSMP} show that the generic residual intersections of generic height three Gorenstein ideals and generic complete intersection ideals (of height at least two) are \textit{not} set-theoretic complete intersections. In Section \ref{sec:STCI}, we construct linear specializations of these generic residual intersections that are set-theoretic complete intersections, while preserving the height and the residual intersection property.

\begin{maintheorem}\label{mainthm:C}(cf. Theorems \ref{thm:STCIforBMSMP} and \ref{thm:STCIforGor})
Let $n\geq 2$.
\begin{enumerate}[leftmargin=*, align=left, label=(\alph*)]
    \item Let $I$ be an ideal generated by a regular sequence of $n$ elements in a Gorenstein local ring $A$. For every $s\geq n$, there is a polynomial extension $B$ of $A$ in which $IB$ admits an $s$-residual intersection that is a set-theoretic complete intersection.

    \item Let $\KK$ be a field, $n$ be odd, and $s\geq3$. Let $I$ be the ideal of submaximal Pfaffians of a generic alternating $n\times n$ matrix. Its generic $s$-residual intersection $P(n,s)$ admits a linear specialization to a set-theoretic complete intersection $P'$ that remains an $s$-residual intersection of a specialization $I'$ of $I$. Moreover, $I'$ is Gorenstein of height three with $\mu(I')=\mu(I)=n$.
\end{enumerate}
\end{maintheorem}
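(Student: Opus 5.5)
Both parts will be proved by writing down explicit specializations and then checking two things: the residual intersection property, via height and the colon ideal, and the set-theoretic complete intersection property, via an explicit regular sequence of length $s$ whose radical equals the residual intersection.

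\textbf{Part (a).} Let $I=(a_1,\ldots,a_n)$ be generated by a regular sequence in the Gorenstein local ring $A$, and let $B=A[Y]$ with $Y$ an $n\times s$ matrix of indeterminates. The generic $s$-residual intersection is $J=(\mathbf a Y):IB$. By the known description for complete intersections (Huneke), $J=(\mathbf a Y)+I_n(Y)$, the second summand being the maximal minors of $Y$. I will specialize $Y$ to a sparse matrix $Y'$ so that $I_n(Y')$ becomes set-theoretically generated by few elements. The first candidate is to let row $i$ of $Y'$ have generic entries only in a band of columns, making $Y'$ a ``ladder''. Then:
\begin{itemize}
\item the elements $\mathbf aY'$ give $s$ elements $g_1,\ldots,g_s$;
\item $I_n(Y')$ is generated up to radical by $n$-minors lying along the diagonals;
\item these minors can be absorbed, up to radical, into suitably modified $g_j$ by adding them to the columns.
\end{itemize}
The specialization I would aim for is $g_j=\sum_i a_iy_{i,j}$, with $Y'$ a Hankel or banded matrix such that $V(\mathbf aY')=V(I)\cup V(I_n(Y'))$ set-theoretically. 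Next I check that $\height (\mathbf aY'):I\geq s$, computed after localizing at the $a_i$ and using that $I_n(Y')$ has height $s-n+1$ generically for banded matrices. Specialization of the colon then follows from Cohen--Macaulayness and the height count, via the standard criterion of Huneke--Ulrich that residual intersections specialize when the height is preserved. \emph{The main obstacle} is choosing $Y'$ so that the modified $s$ elements cut out exactly $V(J)$. Here I would first try $s=n$ directly, where $J=(\mathbf aY')+(\det Y')$ and one can take $Y'$ upper triangular with $\det=\prod y_{ii}$, then induct on $s$ by adding columns.

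\textbf{Part (b).} Lemma~\ref{lem:localization} and the ASL structure suggest specializing $X$ and $Y$ so that the poset $\mathcal P$ collapses to a chain of length $s$. The construction is:
\begin{itemize}
\item choose $X'$ sparse, say tridiagonal alternating, so that the submaximal Pfaffians $I'$ remain of height three with $n$ minimal generators (check via the Buchsbaum--Eisenbud criterion: the grade of the Pfaffians is three);
\item choose $Y'$ with a staircase pattern;
\item verify that each Pfaffian generator of $P'$ lies, up to radical, in the ideal of the $s$ sums of the Pfaffians lying in the rank levels of $\mathcal P$. This is the usual Schmitt--Vogel/ASL argument: the sum of the elements in each level of a poset whose products straighten gives set-theoretic generators, and when the specialized poset has rank $s-1$ this yields exactly $s$ elements.
\end{itemize}
The residual property $P'=((p'_i)Y'):I'$ follows as in part (a), by checking $\height P'=s$ and invoking Huneke--Ulrich specialization. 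The hardest step is ensuring the specialized Pfaffians satisfy straightening relations compatible with a poset of rank $s-1$ rather than $\binom{s}{2}-\binom{s-n}{2}$.
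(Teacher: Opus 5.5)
Your proposal leaves open exactly the step that constitutes the theorem. In neither part do you produce $s$ elements cutting out the residual intersection, and the mechanisms you sketch do not work as stated.

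\textbf{Part (a).} The target ``$V(\mathbf aY')=V(I)\cup V(I_n(Y'))$'' is not the right one. One always has $V(\mathbf aY')=V(I)\cup V(J')$ with $J'=(\mathbf aY'):I$, and $V(I)\not\subseteq V(J')$. So the $s$ products never cut out $V(J')$ by themselves, and ``absorbing the minors into modified $g_j$'' gives no mechanism that would fix this. Likewise, ``induct on $s$ by adding columns'' has no content as stated.

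The missing idea is to \emph{trade} $s-n+1$ of the $g_j$ for new variables. Write the $s$ elements as the entries of $X'\underline f$, with $X'$ an $s\times n$ matrix (the transpose of your $Y'$). Put $x_{i,j}=c_{i+j-1}$ for $n+1\le i+j\le s+1$ and $x_{i,j}=0$ for $i+j>s+1$, where $c_n,\ldots,c_s$ are new variables and the entries with $i+j\le n$ stay generic. Then:
\begin{itemize}
\item The maximal minor on rows $k-n+1,\ldots,k$ is $\pm c_k^n$ modulo $(c_{k+1},\ldots,c_s)$. By descending induction, $\sqrt{I_n(X')}=(c_n,\ldots,c_s)$.
\item Every entry of the last $s-n+1$ rows lies in $(c_n,\ldots,c_s)$. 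Hence $\sqrt{J'}=\sqrt{(g_1,\ldots,g_{n-1},c_n,\ldots,c_s)}$, which is exactly $s$ elements.
\item Height $s$ follows because, modulo the $c_k$, the $g_i$ become $\sum_{j\le n-i}x_{i,j}f_j$, and these form a regular sequence.
\end{itemize}

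Your $s=n$ attempt misses precisely the point that the diagonal must carry a \emph{single} variable. With $\det Y'=\prod y_{ii}$ for distinct $y_{ii}$, $J'$ becomes reducible. Already for $n=2$ and $I=(a_1,a_2)$ generated by variables, $J'=(a_1,y_{22})\cap(y_{11},a_1y_{12}+a_2y_{22})$, and you give no two set-theoretic generators. With all $y_{ii}$ equal to one variable $b$, one gets $\sqrt{J'}=\sqrt{(b,g_2,\ldots,g_n)}$.

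\textbf{Part (b).} The ASL route cannot work as proposed, because specializing entries does not change the index poset $\mathcal P$.
\begin{itemize}
\item The straightening relations of Theorem~\ref{thm:ASL} survive specialization, but for the same poset of rank $\binom{s}{2}-\binom{s-n}{2}+1$. So Proposition~\ref{prop:SOPofASL} still yields that many elements, not $s$.
\item You give no construction of a new rank-$s$ poset on the specialized generators with the needed straightening property.
\item The generators cannot literally collapse to $s$ elements. A linear specialization modulo a sequence regular on $R/P$ preserves graded Betti numbers, so $\mu(P')=\mu(P)>s$, since $\ara(P)>s$.
\end{itemize}

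Your concrete choice of $X'$ also fails. For a tridiagonal alternating $X'$ of odd size, the submaximal Pfaffian omitting an even index vanishes, and the others are monomials in $x_{12},\ldots,x_{n-1,n}$. For $n=3$ this gives $I'=(x_{12},x_{23})$. So $I'$ has height two and $\mu(I')=(n+1)/2$, violating exactly the conclusions you need.

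The paper avoids ASLs here entirely. It places new variables $c_1,\ldots,c_s$ along anti-diagonals of both blocks: $x_{i,j}=c_{i+j-n}$; $y_{i,j}=c_{i+j-1}$ for $j\ge2$; $y_{i,1}=c_{2i-n}$. Entries are zero when the index exceeds $s$ and stay generic when it is $\le 0$. It then shows two things:
\begin{itemize}
\item Suitable Pfaffians equal $\pm c_k^{m}$ modulo $(c_{k+1},\ldots,c_s)$: $[k-n+1,\ldots,k]$ for $k\ge n$, and $[1,\ldots,k]$ or $[2,\ldots,k]$ by parity for $k<n$.
\item Every generator of $P'$ lies in $(c_1,\ldots,c_s)$; the only delicate one is $[1]$.
\end{itemize}
Hence $\sqrt{P'}=(c_1,\ldots,c_s)$, and height $s$ is automatic. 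Proposition~\ref{prop:specialization} then makes $P'$ a specialization of $P$. Height three of $I'$ is checked on three submaximal Pfaffians with pairwise coprime initial terms. The residual property comes from the Kustin--Ulrich structure theorem.
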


As noted in Corollary \ref{cor:araRI}, Theorem \ref{mainthm:A} produces a sharp upper bound for the arithmetic rank of any residual intersection of a height three perfect Gorenstein ideal, while the constructions in Theorem \ref{mainthm:C} show that the height lower bound is attained. The analogous conclusions hold for complete intersection ideals by \cite[Theorem 4.9]{BMSMP} and Theorem \ref{mainthm:C}.

Finally, in Section \ref{sec:questions}, we ask a natural question on the behaviour of arithmetic rank inspired by the nuances of our arithmetic rank computation in Theorem \ref{mainthm:A}. 
\begin{mainquestion}\label{que:mainqnD}
Let $\KK$ be a field, $R=\KK[x_1,\ldots,x_t]$, and $I$ a proper ideal of $R$. Let $z$ be a new variable; set $S=R[z]$. Is
\[\ara(IS+(z)) = \ara(I)+1?\qedhere\]
\end{mainquestion}
While Question \ref{que:mainqnD} is open in general, we establish affirmative answers in some cases and produce counterexamples when its hypotheses are weakened. We also formulate an equivalent version---Question \ref{question:alternate}---which also provides a method for computing cohomological dimension in certain examples (Examples \ref{exa:hypersurface} and \ref{exa:general}).

\subsection*{Notation}
All rings considered are Noetherian and unital. If $I$ is an ideal of the ring $R$, we write $\mu(I)$ for the minimal number of elements needed to generate $I$. The binomial coefficient $\binom{a}{b}$ is zero if $a<0$.  

\section{Preliminaries}

\subsection{Residual intersections}

Let $X$ and $Y$ be two irreducible closed subschemes of a Noetherian scheme $Z$ with $\codim_Z X \leq \codim_Z Y =s$ and $Y \nsubseteq X$. Then $Y$ is called a \emph{residual intersection of $X$} if the number of equations needed to define $X\cup Y$ as a subscheme of $Z$ is the smallest possible, namely $s$. However, in order to include the case where $X$ and $Y$ are reducible with $X$ possibly containing some component of $Y$, the following algebraic definition is more suited:

\begin{dfn}
Let $R$ be a polynomial ring and $I$ an ideal of $R$. Given an ideal $\mathfrak{a} \subsetneq I$ generated by $s$ elements, set $J = \mathfrak{a}:I$. If $\height(J) \geq s\geq\height(I)$ holds, then $J$ is an \emph{$s$-residual intersection} of $I$ (or $R/J$ is an $s$-residual intersection of $R/I$). 

If furthermore $I_{\frakp}=\fraka_{\frakp}$ for all $\frakp \in V(I)$ with $\height{\frakp}\leq s$, then $J$ is called a \textit{geometric} $s$-residual intersection of $I$. 
\end{dfn}

Residual intersections are natural generalizations of linkage: if $I$ is unmixed of height $g$, then the $g$-residual intersections of $I$ are precisely its links and the geometric $g$-residual intersections are precisely its geometric links. Furthermore, we may define the `most general' residual intersections by the following technical requirements. 

\begin{dfn}
 We say that an ideal $I$ in a Noetherian ring \emph{satisfies the condition $G_s$} for some natural number $s$ if $\mu(I_{\frakp})\leq \height (\frakp)$ for all prime ideals $\frakp$ containing $I$ such that $\height (\frakp)\leq s-1$. We say that $I$ \emph{satisfies $G_\infty$} if it satisfies $G_s$ for each $s\in\NN$, and this is equivalent to the condition that $\mu(I_{\frakp})\leq \height(\frakp)$ for each prime ideal $\frakp$ containing $I$.  
\end{dfn}

\begin{dfn}
Let $R$ be a polynomial ring and $I$ be an ideal of height $g>0$ satisfying $G_{s+1}$, where $s\geq g$. Choose any generating set $\underline{f}\colonequals f_1, \ldots, f_n$ of $I$ and let $Y$ be an $s \times n$ matrix of indeterminates. Let $\mathfrak{a}$ be the ideal generated by the entries of the matrix $Y[f_1\dots f_n]^{\tr}$. Then we set \[ 
\RI(s;I)=\RI(s;\underline{f})\colonequals \mathfrak{a}R[Y]:IR[Y]\] and call this ideal a \emph{generic $s$-residual intersection} of $I$. Generic residual intersections are essentially independent of the generating set as they are isomorphic up to adjoining finitely many variables due to \cite[Lemma 2.2]{HunekeUlrichGenericRI}. 
\end{dfn}
We note that the arithmetic rank and the cohomological dimension supported at an ideal are unaffected by polynomial extensions (see Remark \ref{rmk:araretract}), so these numbers are well-defined in view of the above definition.

Generic residual intersections are geometric residual intersections \cite[Theorem 3.3]{HunekeUlrichRI} and under mild assumptions, specialize to any $s$-residual intersection of the given ideal \cite[Theorem 1.5]{HunekeUlrichGenericRI}.

\subsection{Algebras with straightening law}\label{subsec-ASLintro}

\begin{dfn}
    Fix  a ring $A$ and a finite poset $\mathcal{H} \subseteq A$. A \textit{standard monomial} is a product of a weakly increasing sequence of elements of $\mathcal{H}$: 
    \[
    \alpha_1 \cdots \alpha_k \ \text{where} \ \alpha_{1} \leq \alpha_{2} \leq \cdots \leq \alpha_{k}.
    \]
    The empty product $1$ is also a standard monomial.
\end{dfn}

\noindent Assume now that $A$ is an $S$-algebra for a ring $S$ and that the elements of $\mathcal{H}$ generate $A$ as an $S$-algebra. 
\begin{dfn}\label{def:ASL}
    The ring $A=S[\mathcal{H}]$ is an \textit{algebra with a straightening law (ASL)} over $\mathcal{H}$ if it satisfies the following axioms:
\begin{itemize}
    \item (ASL-1) The algebra $A$ is a free $S$-module with basis given by the set of standard monomials.
    \item (ASL-2) If $\alpha, \beta \in \mathcal{H}$ are incomparable, and if 
    \[
    \alpha \cdot \beta = \sum_{\substack{c_i\neq 0 \\ \gamma_{i1}\leq \cdots \leq \gamma_{im_i}}}
    c_i \cdot (\gamma_{i1} \gamma_{i2} \cdots \gamma_{im_i})\
    \]
    is the unique expression of $\alpha \cdot \beta$ as a linear combination of standard monomials in $\mathcal{H}$ 
    then $\gamma_{i1} < \alpha$ and $\gamma_{i1} < \beta$ for all $i$. 
    The relations in (ASL-2) are called the\textit{ straightening relations} of the algebra $A$. \qedhere
\end{itemize}
\end{dfn}

\begin{dfn}\label{def:symASL}
    An ASL is \textit{symmetric} if it is also an ASL with the poset order reversed. In other words, a ring $A=S[\mathcal{H}]$ is a symmetric ASL if it satisfies (ASL-1) and (ASL-2) from above with the additional condition that $\alpha<\gamma_{im_i}$ and $\beta<\gamma_{im_i}$ for all $i$. 
\end{dfn}

\begin{dfn}[{\cite[Page 55]{BrunsVetter}}]
   Given a finite poset $\mathcal{H}$ and $\mu\in \mathcal{H}$, define 
   \[
   \text{rank}(\mu)=\max\{k\,|\,\exists \text{ a chain }\mu=\mu_k>\mu_{k-1}>\cdots>\mu_1 \text{ with }\mu_i\in \mathcal{H}\, \, \forall\,  i\}.
   \]
   We also define $\text{rank}(\mathcal{H})=\max\{\text{rank}(\mu):\mu\in \mathcal{H}\}.$ Set $\rank({\emptyset})=0$.
\end{dfn}

\begin{prp}[{\cite[Lemma 5.9]{BrunsVetter}}]\label{prop:SOPofASL}
    Let $\mathcal{H}$ be a finite poset and $A=S[\mathcal{H}]$ be an ASL. Let $x_i=\sum_{\rank(\mu)=i}\mu$ for all $1\le i\le \rank(\mathcal{H})$. Then
    \begin{flalign*}
    &&\sqrt{ (x_1,\dots,x_{\rank(\mathcal{H})})} = \sqrt{(\mathcal{H})}. &&\qed
\end{flalign*}
\end{prp}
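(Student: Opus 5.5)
The plan is to prove the statement (Proposition~\ref{prop:SOPofASL}) by showing that every $\alpha\in\mathcal H$ is nilpotent modulo $J\colonequals(x_1,\dots,x_{r})$, where $r=\rank(\mathcal H)$. The inclusion $J\subseteq(\mathcal H)$ is clear, so only the reverse radical containment needs work. Since $\sqrt{(\mathcal H)}$ is generated up to radical by the elements of $\mathcal H$, it suffices to show $\mathcal H\subseteq\sqrt J$.

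I would argue by induction on the rank, proving the following claim for $k=1,\dots,r$: every $\mu$ with $\rank(\mu)\le k$ lies in $\sqrt J$. The key structural input is a consequence of (ASL-2): if $\alpha,\beta$ are incomparable, then $\alpha\beta$ is an $S$-linear combination of standard monomials whose smallest factor $\gamma$ satisfies $\gamma<\alpha$ and $\gamma<\beta$. Such a $\gamma$ has rank strictly less than $\min(\rank\alpha,\rank\beta)$. Also, two distinct elements of the same rank are incomparable, since if $\alpha<\beta$ then $\rank\beta>\rank\alpha$.

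For the inductive step, fix $k$ and assume all elements of rank $<k$ lie in $\sqrt J$. Let $\mu$ have rank $k$ and write
\[
x_k=\mu+\sum_{\nu\neq\mu,\ \rank\nu=k}\nu .
\]
Multiply by $\mu$. Each product $\mu\nu$ with $\nu\ne\mu$ of rank $k$ is a product of incomparable elements, so by (ASL-2) it lies in the ideal generated by elements of rank $<k$, hence in $\sqrt J$ by induction. Thus $\mu^2=\mu x_k-\sum_\nu\mu\nu\in\sqrt J$, and therefore $\mu\in\sqrt J$. The base case $k=1$ is the same computation: products of distinct minimal elements are incomparable, and (ASL-2) then forces each such product to be $0$, because there is no $\gamma$ strictly below a minimal element. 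Note that ASL-1 is not actually needed beyond making the straightening expression meaningful.

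The main obstacle is making sure (ASL-2) applies in exactly this form, namely that the product $\mu\nu$ lies in the ideal generated by the $\gamma_{i1}$. This holds because every standard monomial in the expansion contains $\gamma_{i1}$ as a factor, so $\mu\nu\in(\gamma_{i1})_i\subseteq\sqrt J$. With that checked, the induction closes and gives $\mathcal H\subseteq\sqrt J$, which proves the equality of radicals.
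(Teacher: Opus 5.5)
Your argument is correct. It is essentially the standard proof of Bruns--Vetter's Lemma~5.9, which the paper quotes without proof: you induct on rank, use that distinct elements of equal rank are incomparable so that by (ASL-2) their products lie in the ideal generated by elements of smaller rank, and deduce $\mu^2=\mu x_k-\sum_{\nu}\mu\nu\in\sqrt{J}$; your point that every term of a straightening relation must actually contain a factor $\gamma_{i1}$ (so no constant term occurs) is exactly the reading of (ASL-2) the argument requires.
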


\begin{dfn}
    A non-empty subset $\mathcal{C}$ of a poset $\mathcal{H}$ is called an \emph{ideal} if 
    for all $\gamma\in \mathcal{C}$ and $\delta \in \mathcal{H}$ such that $\delta\leq\gamma$, we have $\delta\in \mathcal{C}$.
\end{dfn}

\begin{prp}[{\cite[Proposition 5.22]{BrunsVetter}}]\label{prop:araofposetideal}
    Let $\mathcal{H}$ be a finite poset and $A=S[\mathcal{H}]$ be a symmetric graded ASL. Then for every ideal $\mathcal{C}$ of the poset $\mathcal{H}$, the arithmetic rank of the ideal $(\mathcal{C})$ of $A$ is the rank of $\mathcal{C}$. \qed
\end{prp}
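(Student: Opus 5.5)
The plan is to prove the two inequalities $\ara((\mathcal C))\le\rank(\mathcal C)$ and $\ara((\mathcal C))\ge\rank(\mathcal C)$ separately. Only the second should need symmetry.

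\emph{Upper bound.} Put $r=\rank(\mathcal C)$ and, in analogy with Proposition~\ref{prop:SOPofASL}, set $y_i=\sum_{\gamma\in\mathcal C,\ \rank(\gamma)=i}\gamma$ for $1\le i\le r$. Since $\mathcal C$ is a poset ideal, the rank of an element of $\mathcal C$ computed in $\mathcal C$ agrees with its rank in $\mathcal H$. I will show by induction on $i$ that every $\gamma\in\mathcal C$ of rank $i$ lies in $\sqrt{(y_1,\dots,y_i)}$.

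Fix such a $\gamma$ and set $J=\sqrt{(y_1,\dots,y_{i-1})}$; by induction $J$ contains every element of $\mathcal H$ of rank $<i$. Write $\gamma y_i=\gamma^2+\sum\gamma\beta$, where $\beta$ runs over the other rank-$i$ elements of $\mathcal C$. Distinct elements of the same rank are incomparable, so (ASL-2) expresses each $\gamma\beta$ as a combination of standard monomials whose first factor is $<\gamma$. That factor has rank $<i$ and therefore lies in $J$. Hence $\gamma^2\equiv\gamma y_i \pmod J$, so $\gamma\in\sqrt{(y_1,\dots,y_i)}$. For $i=1$ the sum of products is simply $0$, because no element lies below a minimal one. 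Taking $i=r$ gives $\sqrt{(y_1,\dots,y_r)}=\sqrt{(\mathcal C)}$.

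\emph{Lower bound.} By Krull's height theorem it suffices to show that some minimal prime of $(\mathcal C)$ has height $\ge r$. I would first reduce to the case where $S$ is a field. Tensoring with $\kappa(\frakp)$ for a suitable prime $\frakp$ of $S$ keeps the ASL axioms, keeps symmetry, and keeps the poset. Arithmetic rank can only drop under this base change, since images of set-theoretic generators remain set-theoretic generators. So a lower bound over $\kappa(\frakp)$ gives one over $S$.

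Over a field, the plan has three steps.
\begin{enumerate}
\item Use the standard facts from \cite{BrunsVetter}: $\dim A=\rank(\mathcal H)$, and for a poset ideal $\mathcal C$ the quotient $A/(\mathcal C)$ is again an ASL, on $\mathcal H\setminus\mathcal C$. Hence $\dim A/(\mathcal C)=\rank(\mathcal H\setminus\mathcal C)$.
\item Use symmetry: $\mathcal H\setminus\mathcal C$ is a poset ideal for the reversed order, and $A$ is also an ASL for that order.
\item Pick a chain $\gamma_1<\dots<\gamma_r$ in $\mathcal C$ and extend it to a maximal chain of $\mathcal H$.
\end{enumerate}
The aim is to produce a prime $\frakp\supseteq(\mathcal C)$ with $\dim A/\frakp\le\rank(\mathcal H)-r$. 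I would get it from the degeneration of $A$ to the discrete ASL, which is the Stanley--Reisner ring of the order complex of $\mathcal H$. There the heights of the ideals generated by $\mathcal C$ and by $\mathcal H\setminus\mathcal C$ can be read off from chains: a facet of the order complex avoiding $\mathcal C$ misses at least $r$ vertices because of the chosen chain. I would then transfer this height bound back to $A$ by upper semicontinuity under the Gr\"obner/standard-monomial deformation.

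\emph{Expected obstacle.} The hard part is this transfer of the height estimate from the discrete ASL back to $A$. A dimension count alone only controls $\dim A/(\mathcal C)$, not the height of a minimal prime, unless $A$ is equidimensional. I expect the reversed-order ASL structure given by symmetry to be precisely what supplies this control, and I would try to extract it from the reversed-order version of Proposition~\ref{prop:SOPofASL}.
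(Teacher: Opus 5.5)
Your upper bound is correct: it is the argument of Proposition~\ref{prop:SOPofASL} run inside $\mathcal C$. There is one small slip. The induction shows that $J$ contains the elements of $\mathcal C$ of rank $<i$, not those of $\mathcal H$. That is enough, because the first factor of each standard monomial in the straightening of $\gamma\beta$ lies below $\gamma$, hence in $\mathcal C$. The base change to a residue field is also fine.

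The lower bound has a genuine gap. The sufficient condition you aim for is a minimal prime of $(\mathcal C)A$ of height at least $r$. Such a prime need not exist, even for symmetric graded ASLs over a field. So neither the degeneration to the discrete ASL nor semicontinuity can produce it. For example, take $\mathcal H=\{a_1<a_2<a_3\}\sqcup\{b\}$ and $\mathcal C=\{a_1,a_2,a_3\}$, with the discrete ASL $A=\KK[a_1,a_2,a_3,b]/(a_1b,a_2b,a_3b)$. Its straightening relations are $a_ib=0$, so it is vacuously symmetric. Then $(\mathcal C)A$ is prime with $A/(\mathcal C)A\cong\KK[b]$. It is a minimal prime of $A$ (the line component), so it has height $0$, whereas $\rank\mathcal C=3$. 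Symmetry therefore does not supply the control you hoped for. Also, your stated aim, a prime $\frakp\supseteq(\mathcal C)$ with $\dim A/\frakp\le\rank\mathcal H-r$, is met by the homogeneous maximal ideal. It says nothing unless $\frakp$ is minimal over $(\mathcal C)$.

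The missing idea is to stop working in $A$ and pass to a quotient in which $(\mathcal C)$ becomes the irrelevant ideal. Your step 2 already contains the ingredients.
\begin{itemize}
\item By symmetry, $A$ is an ASL on $\mathcal H$ with the reversed order, and there $\mathcal H\setminus\mathcal C$ is a poset ideal.
\item So $\bar A=A/(\mathcal H\setminus\mathcal C)A$ is a graded ASL on $\mathcal C$ with the reversed order. Over a field, $\dim\bar A=\rank\mathcal C$.
\item Since $\bar A/(\mathcal C)\bar A=A/(\mathcal H)A=S$, the ideal $(\mathcal C)\bar A$ is the homogeneous maximal ideal of $\bar A$. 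Its height is $\dim\bar A=\rank\mathcal C$.
\item Arithmetic rank can only drop along $A\to\bar A$. Krull's height theorem in $\bar A$ then gives $\ara((\mathcal C)A)\ge\rank\mathcal C$.
\end{itemize}
In the example above, $\bar A=\KK[a_1,a_2,a_3]$. This is, to my recollection, the argument behind \cite[Proposition 5.22]{BrunsVetter}, which the paper simply cites. With it, your steps 1 and 3 and the Gr\"obner degeneration become unnecessary.
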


\subsection{\'Etale cohomology}
Let $\KK$ be an algebraically closed field, $V$ be a quasiprojective variety over $\KK$, and $G$ be an Abelian group of order co-prime to $\chr \KK$ (if $\chr\KK>0$).

We denote by $H^i_{\et}(V,G)$ the $i^{\text{th}}$ \'etale cohomology group with coefficients in the constant sheaf on $V$ given by $G$. The Mayer-Vietoris sequence is a critical tool in our \'etale cohomology computations.  

\begin{prp}[{\cite[III.2.24]{Milne}}]\label{prop:et-mv}
    If $V = V_{1} \cup V_{2}$ with $V_{i}$ open in $V$, then there is a long exact sequence
    \[
        \cdots\to H^{i}_{\et}(V,G) \to
        H^{i}_{\et}(V_{1},G) \oplus H^{i}_{\et}(V_{2},G) \to
        H^{i}_{\et}(V_{1} \cap V_{2},G) \to
        H^{i+1}_{\et}(V,G) \to\cdots 
    \]
\end{prp}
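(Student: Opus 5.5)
We would deduce the Mayer--Vietoris sequence from a short exact sequence of complexes obtained by evaluating an injective resolution of the constant sheaf on the three opens $V$, $V_1\sqcup V_2$ (that is, $V_1$ and $V_2$ separately) and $V_1\cap V_2$. Write $j_1\colon V_1\to V$, $j_2\colon V_2\to V$ and $j_{12}\colon V_1\cap V_2\to V$ for the open immersions, and consider the étale sheaves $\ZZ_U\colonequals j_!\ZZ$, the extension by zero of the constant sheaf, for $j\colon U\to V$ open.

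The first step is to show that
\[
0\to \ZZ_{V_1\cap V_2}\to \ZZ_{V_1}\oplus\ZZ_{V_2}\to \ZZ_V\to 0
\]
is exact on the étale site of $V$. Here the first map is $(\iota,-\iota)$ and the second is the sum. Exactness can be checked on stalks at geometric points $\bar x$, since étale stalks detect exactness. We split into cases according to which of $V_1$, $V_2$ contain $x$. Because $V=V_1\cup V_2$, the point $x$ lies in at least one of them, and in each case the stalk sequence reduces to one of
\[
0\to0\to\ZZ\to\ZZ\to0
\qquad\text{or}\qquad
0\to\ZZ\to\ZZ^2\to\ZZ\to0 .
\]
Both are exact.

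The second step uses the adjunction $\operatorname{Hom}(j_!\ZZ,\mathcal F)=\mathcal F(U)$. Applying $\operatorname{Hom}(-,\mathcal I)$ to the sequence above for an injective sheaf $\mathcal I$ yields an exact sequence
\[
0\to\mathcal I(V)\to\mathcal I(V_1)\oplus\mathcal I(V_2)\to\mathcal I(V_1\cap V_2)\to0 .
\]
Right exactness comes from injectivity of $\mathcal I$; without it only the sheaf axiom, i.e.\ left exactness, would be available.

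Next, take an injective resolution $G\to\mathcal I^\bullet$ on $V_{\et}$ and apply this sequence termwise, giving a short exact sequence of complexes. Its long exact cohomology sequence is the desired one, once we identify $H^i(\mathcal I^\bullet(U))$ with $H^i_{\et}(U,G)$ for each open $U$. This identification holds because the restriction $j^*$ is exact and has the exact left adjoint $j_!$. Hence $j^*$ preserves injectives, so $j^*\mathcal I^\bullet$ is an injective resolution of $G|_U$ and $\mathcal I^\bullet(U)=\Gamma(U,j^*\mathcal I^\bullet)$ computes étale cohomology of $U$.

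The only point needing care, and the one we expect to be the main obstacle, is the surjectivity in the second step, equivalently the exactness of $j_!$ and the stalk check at geometric points. Everything else is formal homological algebra. The hypotheses that $\KK$ is algebraically closed and that $|G|$ is prime to $\chr \KK$ are not needed for this argument; they matter only for later computations.
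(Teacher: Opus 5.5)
Your proof is correct and complete. The paper does not prove this statement; it only cites Milne, so there is no internal argument to compare against.

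What you give is the standard self-contained derivation. In effect you take the long exact $\operatorname{Ext}_V(-,G)$ sequence of
\[
0\to \ZZ_{V_1\cap V_2}\to \ZZ_{V_1}\oplus\ZZ_{V_2}\to\ZZ_V\to 0,
\]
and combine it with the identification $\operatorname{Ext}^i_V(j_!\ZZ,G)\cong H^i_{\et}(U,G)$. That identification is exactly what your injective-resolution step and the observation that $j^*$ preserves injectives establish.

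The other common route is through \v{C}ech cohomology. One uses the \v{C}ech-to-derived-functor spectral sequence
\[
\check H^p(\{V_1,V_2\},\underline{H}^q(G))\Rightarrow H^{p+q}_{\et}(V,G).
\]
For a two-member Zariski covering the alternating \v{C}ech complex lives in degrees $0$ and $1$ only. The spectral sequence therefore has two columns and collapses to the same long exact sequence.

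Your approach avoids spectral sequences altogether. It also makes plain that nothing about $\KK$ or $G$ is used: the sequence holds for any abelian sheaf on $V_{\et}$, as you remark. The \v{C}ech approach, by contrast, extends more naturally to coverings with more than two members.

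One detail is worth writing out. Under the identification $\operatorname{Hom}(j_!\ZZ,\mathcal I)=\mathcal I(U)$, the maps induced by $\ZZ_{V_1\cap V_2}\to\ZZ_{V_i}$ and $\ZZ_{V_i}\to\ZZ_V$ are the restriction maps, up to your chosen sign. This ensures the resulting long exact sequence carries the expected maps.
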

The computation of \'etale cohomology groups is relevant in the context of arithmetic rank due to the following proposition. 

\begin{prp}[{\cite[Proposition 2.9]{LyubeznikAnnals}}]\label{prop:ara-from-et}
    Suppose $V$ is a variety of dimension $d$ over $\KK$, and admits a cover by $k$ affine open sets. For any $\ell$-torsion Abelian group $G$ with $\ell$ invertible in $\KK$, 
    \[
    H^i_{\et}(V,G)=0 \text{ for } i>d+k-1.
    \]
\end{prp}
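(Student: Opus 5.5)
The plan is to induct on the number $k$ of affine open sets, using the Mayer--Vietoris sequence of Proposition~\ref{prop:et-mv} for the inductive step and Artin's vanishing theorem for the base case. To make the induction go through, I would prove the slightly stronger statement: \emph{for every separated scheme $V$ of finite type over $\KK$ with $\dim V\leq d$ that is covered by $k$ affine open sets, $H^i_{\et}(V,G)=0$ for all $i>d+k-1$.} The strengthening is needed because intersections of the opens may be reducible or of smaller dimension. Quasiprojective varieties are separated, so this includes the case in the statement.

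\textbf{Reduction to finite coefficients.} Since $G$ is $\ell$-torsion, it is the filtered colimit of its finitely generated subgroups, each of which is a finite $\ell$-group. Étale cohomology of a quasi-compact quasi-separated scheme commutes with filtered colimits of sheaves. So it suffices to treat finite $\ell$-torsion $G$, that is, constructible torsion sheaves with torsion prime to $\chr \KK$.

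\textbf{Base case $k=1$.} Here $V$ is affine of dimension at most $d$ over the algebraically closed field $\KK$. Artin's affine vanishing theorem (SGA~4, Exp.~XIV) gives $H^i_{\et}(V,\mathcal F)=0$ for $i>\dim V$ and every torsion sheaf $\mathcal F$. This is exactly the claim $H^i_{\et}(V,G)=0$ for $i>d=d+k-1$. I expect this step to be the only genuinely deep input, and I would cite it rather than reprove it.

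\textbf{Inductive step.} Write $V=U_1\cup W$, where $U_1$ is affine and $W=U_2\cup\cdots\cup U_k$.
\begin{itemize}
\item $W$ is open in $V$, has dimension at most $d$, and is covered by $k-1$ affines.
\item Because $V$ is separated, each $U_1\cap U_j$ is affine. Hence $U_1\cap W$ is covered by the $k-1$ affines $U_1\cap U_j$ and has dimension at most $d$.
\end{itemize}
Proposition~\ref{prop:et-mv} gives the exact segment
\[
H^{i-1}_{\et}(U_1\cap W,G)\to H^{i}_{\et}(V,G)\to H^{i}_{\et}(U_1,G)\oplus H^{i}_{\et}(W,G).
\]
Now fix $i>d+k-1$.
\begin{itemize}
\item $H^i_{\et}(U_1,G)=0$ by the base case, since $i>d$.
\item $H^i_{\et}(W,G)=0$ by induction, since $i>d+(k-1)-1$.
\item $H^{i-1}_{\et}(U_1\cap W,G)=0$ by induction, since $i-1>d+(k-1)-1$.
\end{itemize}
By exactness the middle term vanishes, which completes the induction. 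Apart from Artin vanishing, the argument is formal bookkeeping of indices.
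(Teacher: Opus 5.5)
Your proof is correct and is the standard argument behind the cited result of Lyubeznik; the paper itself gives no proof and only cites it. You induct on $k$ via the Mayer--Vietoris sequence of Proposition~\ref{prop:et-mv}, with Artin's affine vanishing as the base case, and your strengthening to $\dim V\leq d$ together with affineness of $U_1\cap U_j$ from separatedness handles the index bookkeeping correctly. The preliminary reduction to finite coefficients is harmless but unnecessary, since Artin vanishing already holds for arbitrary torsion sheaves on affine schemes of finite type over a separably closed field.
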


We also denote by $\Hc^i(V,G)$ the $i^{\text{th}}$ \'etale cohomology group with compact support, and define 
\[
\cptdim(V)\colonequals \inf\{i \ge 0 \mid \Hc^{i}(V,G) \neq 0\}.
\]

\begin{prp}[Poincar\'e duality, {\cite[VI.11.1]{Milne}}]\label{prop:Poincareduality}
    If $V$ is smooth of algebraic dimension $d$ and $G=\ZZ/\ell\ZZ$ with $\ell$ invertible in $\KK$, then there is an isomorphism $H_{\text{\'et}}^{i}(V,G) \cong \Hc^{2 \dim(V) - i}(V,G)$ for every $i$. 
\end{prp}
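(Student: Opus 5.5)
The plan is to deduce the isomorphism from the perfect cup-product pairing of \'etale Poincar\'e duality, and then remove the twists and the dual, which is possible because $\KK$ is algebraically closed and the coefficient group is $\ZZ/\ell\ZZ$. We may assume $V$ is connected, since both sides are additive over connected components. The proof has three steps.

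\textbf{Step 1: the trace map.} Let $\Lambda=\ZZ/\ell\ZZ$ and $d=\dim V$. First construct a trace map
\[
\tr\colon \Hc^{2d}(V,\mu_\ell^{\otimes d})\to \Lambda
\]
and show it is an isomorphism when $V$ is connected. The construction goes by induction on $d$, fibring $V$ locally (after shrinking) over a smooth variety of dimension $d-1$ with curve fibres. The base case is curves. There one uses the Kummer sequence
\[
0\to\mu_\ell\to\mathbb{G}_m\xrightarrow{\ \ell\ }\mathbb{G}_m\to 0,
\]
together with $H^1(C,\mathbb{G}_m)=\operatorname{Pic}(C)$ and the degree map on the smooth compactification of the curve.

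\textbf{Step 2: perfectness of the pairing.} Next show that the cup-product pairing
\[
H^i_{\et}(V,\Lambda)\times \Hc^{2d-i}(V,\mu_\ell^{\otimes d})\to \Hc^{2d}(V,\mu_\ell^{\otimes d})\xrightarrow{\ \tr\ }\Lambda
\]
is perfect. I would work in the derived category and prove the stronger statement that $Rf_!$ is left adjoint to $f^!$ for $f\colon V\to\operatorname{Spec}\KK$, with $f^!\Lambda\cong\mu_\ell^{\otimes d}[2d]$ for smooth $f$. This reduces by d\'evissage, via a factorisation into \'etale maps and smooth curve fibrations, to two cases:
\begin{itemize}
\item \'etale morphisms, where the adjunction is formal;
\item relative dimension one, which is the curve case handled through the Jacobian and the Weil pairing.
\end{itemize}
Both sides are compatible with the Mayer--Vietoris sequence of Proposition~\ref{prop:et-mv} and with the analogous sequence for compact supports. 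Hence a five-lemma argument over an affine cover globalises the local statement.

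\textbf{Step 3: removing the twist and the dual.} Since $\KK$ is algebraically closed, it contains a primitive $\ell$-th root of unity, so $\mu_\ell\cong\Lambda$ non-canonically and the Tate twists can be dropped. This gives
\[
H^i_{\et}(V,\Lambda)\cong \operatorname{Hom}_\Lambda\bigl(\Hc^{2d-i}(V,\Lambda),\Lambda\bigr).
\]
By the finiteness theorem, these cohomology groups are finite $\Lambda$-modules. When $\ell$ is prime they are finite-dimensional vector spaces, and when $\ell$ is composite they are finite $\ZZ/\ell\ZZ$-modules; in both cases such a module is (non-canonically) isomorphic to its $\Lambda$-dual. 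This yields the stated isomorphism $H^i_{\et}(V,G)\cong \Hc^{2d-i}(V,G)$.

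The main obstacle is Step 2, the perfectness of the pairing. Specifically, the hard points are the curve case and checking that the trace maps are compatible under composition of fibrations. In the paper this is precisely the content imported from \cite[VI.11.1]{Milne}, so one would cite it rather than reprove it.
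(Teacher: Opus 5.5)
Your proposal is correct and follows the paper's route. The paper gives no argument beyond importing the perfect duality pairing of \cite[VI.11.1]{Milne}, and your Steps 1--2 only sketch that imported theorem. Your Step 3 is exactly the implicit step that turns Milne's pairing into the isomorphism as stated: you trivialise $\mu_\ell^{\otimes d}$ using a primitive $\ell$-th root of unity in the algebraically closed field $\KK$, then use that a finite $\ZZ/\ell\ZZ$-module is non-canonically isomorphic to its $\ZZ/\ell\ZZ$-dual. One small point: your reduction to connected components tacitly requires every component to have dimension $\dim V$. This holds for the open subsets of affine space where the paper applies the result.
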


\begin{prp}\label{prop:les-triple}
    If $V = W \sqcup U$ with $U$ open in $V$, then there is a long exact sequence 
    \begin{equation*}
        \cdots\to\Hc^{i}(U,G) \to \Hc^{i}(V,G) \to \Hc^{i}(W,G) \to \Hc^{i+1}(U,G) \to\cdots
    \end{equation*}
\end{prp}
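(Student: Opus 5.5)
This is the standard excision sequence for cohomology with compact support. The plan is to derive it from a short exact sequence of sheaves on $V$ by pushing it forward to a compactification.

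Let $j\colon U\hookrightarrow V$ be the open immersion and $i\colon W\hookrightarrow V$ the complementary closed immersion. For the constant sheaf $G_V$ there is the standard short exact sequence of sheaves on $V_{\et}$
\[
0\lto j_!j^*G_V\lto G_V\lto i_*i^*G_V\lto 0,
\]
that is, $0\to j_!G_U\to G_V\to i_*G_W\to 0$. Exactness can be checked on stalks at geometric points, distinguishing whether the point lies in $U$ or in $W$.

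By Nagata's compactification theorem, choose a proper variety $\bar V$ and an open dense immersion $f\colon V\hookrightarrow \bar V$. By definition, $\Hc^i(V,G)=H^i_{\et}(\bar V,f_!G_V)$, which is independent of the choice of compactification (as in \cite{Milne}). Extension by zero $f_!$ is exact, so applying it to the sequence above gives a short exact sequence on $\bar V$:
\[
0\lto (f\circ j)_!G_U\lto f_!G_V\lto f_!i_*G_W\lto 0.
\]
We then take the long exact cohomology sequence on $\bar V$ and identify the three terms.

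1. The map $f\circ j\colon U\hookrightarrow\bar V$ is an open immersion into a proper variety. Hence $H^i_{\et}(\bar V,(f\circ j)_!G_U)=\Hc^i(U,G)$.

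2. The middle term is $H^i_{\et}(\bar V,f_!G_V)=\Hc^i(V,G)$ by definition.

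3. Let $\bar W$ be the closure of $W$ in $\bar V$, with closed immersion $i'\colon\bar W\hookrightarrow\bar V$. Since $W$ is closed in $V$, we have $W=\bar W\cap V$, so $f'\colon W\hookrightarrow \bar W$ is an open immersion into a proper variety. We need the identification
\[
f_!\,i_*G_W\cong i'_*\,f'_!G_W .
\]
This is checked on stalks at geometric points $\bar x$ of $\bar V$.
   - If $\bar x\in V$, both sides have stalk $G$ when $\bar x\in W$ and $0$ otherwise.
   - If $\bar x\notin V$, the left side has stalk $0$ by definition of $f_!$.
   - On the right side, for $\bar x\in\bar W\setminus W$ the stalk of $f'_!G_W$ vanishes, and for $\bar x\notin\bar W$ the stalk of $i'_*$ vanishes.

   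Since $i'_*$ is exact and preserves cohomology, $H^i_{\et}(\bar V,i'_*f'_!G_W)=H^i_{\et}(\bar W,f'_!G_W)=\Hc^i(W,G)$.

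Substituting these three identifications into the long exact sequence yields the claimed sequence
\[
\cdots\to\Hc^{i}(U,G)\to\Hc^{i}(V,G)\to\Hc^{i}(W,G)\to\Hc^{i+1}(U,G)\to\cdots.
\]

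The main point requiring care is the compatibility in step 3, namely $f_!i_*=i'_*f'_!$, together with the fact that compactly supported cohomology does not depend on the chosen compactifications of $U$ and $W$. Both are standard, and here they can be verified directly on stalks as above, so I do not expect a genuine obstacle.
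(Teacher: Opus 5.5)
Your proof is correct. The paper states this proposition without proof, as the standard excision sequence for compactly supported \'etale cohomology. Your derivation, pushing $0\to j_!G_U\to G_V\to i_*G_W\to 0$ forward along a compactification, is the standard argument behind it.

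One point to tighten: in step 3 the stalk comparison must be applied to an actual morphism of sheaves. The canonical isomorphism $f_!i_*=f_!i_!\cong (f\circ i)_!=(i'\circ f')_!\cong i'_!f'_!=i'_*f'_!$ supplies it. Alternatively, view both sheaves as subsheaves of $f_*i_*G_W=i'_*f'_*G_W$ and compare them there.
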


\subsection{Specialization}

\begin{dfn}
    Let $R= \bigoplus_{i \geq 0} R_i$ be a graded ring, $I\subseteq R$ be a proper homogeneous ideal, and $\underline{x}\subseteq R$ be a homogeneous regular sequence on both $R$ and $R/I$. We say that the ideal $I(R/(\underline{x}))=(I+(\underline{x}))/(\underline{x})$ in the ring $R/(\underline{x})$ is a \textit{homogeneous specialization} of $I$, and we also formulate this as \emph{the ideal $I$ specializes to} $I(R/(\underline{x}))$. If $\underline{x}\subseteq R_1$, we say that $I(R/(\underline{x}))$ is a \textit{linear specialization} of $I$. 
    
    As all specializations constructed in this paper are homogeneous, we often use the term `specialization' instead of `homogeneous specialization'.
\end{dfn}

\begin{prp}\label{prop:specialization}
    Let $\KK$ be a field or the integers, and let $R$ be an equidimensional finitely generated non-negatively graded $\KK$-algebra with $R_0=\KK$. Let $I\subseteq R$ be a proper homogeneous ideal such that $R/I$ is Cohen--Macaulay. Suppose that $\underline{x}\subseteq R$ is a homogeneous regular sequence on $R$ and assume that $\height(I)=\height(I(R/(\underline{x})))$. Then $\underline{x}$ is a regular sequence on $R/I$ and thus $I$ specializes to $I(R/(\underline{x}))$.
\end{prp}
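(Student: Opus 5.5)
The plan is to reduce to the local case, where the graded statement becomes a standard fact about Cohen--Macaulay local rings. Let $\frakm=R_+$ be the homogeneous maximal ideal, which is $R_+$ when $\KK$ is a field. Since $R_0=\KK$ and everything is homogeneous, a homogeneous sequence $\underline{x}=x_1,\ldots,x_r$ is regular on a graded module $M$ if and only if it is regular on $M_\frakm$. The same holds for $M=R$ and $M=R/I$, because associated primes of graded modules are homogeneous and hence contained in $\frakm$. Likewise, heights of homogeneous ideals can be computed after localizing at $\frakm$. So it suffices to show that $\underline{x}$ is regular on $(R/I)_\frakm$.

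For $\KK=\ZZ$, $R_0=\ZZ$ and there is no unique homogeneous maximal ideal. In that case I would localize at an arbitrary maximal ideal $\mathfrak n$ containing $I+(\underline{x})$; these are the only places where regularity could fail. Alternatively, I would reason directly with minimal primes of $I+(\underline{x})$. The argument below is written so that it uses only local statements.

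The key step is a dimension count. Since $R$ is equidimensional and finitely generated over a field or $\ZZ$, it is catenary, so $\dim R/J=\dim R-\height J$ holds for homogeneous $J$ in the graded-local situation. A regular sequence of length $r$ on $R$ gives
\[
\height(I+(\underline{x}))\le \height(I)+r,
\]
and in fact $\height\bigl(I(R/(\underline{x}))\bigr)=\height(I+(\underline{x}))-r$ computed in $R/(\underline{x})$. Here I use that $R/(\underline{x})$ is again equidimensional of dimension $\dim R-r$, which follows from regularity on $R$ together with the graded setup. The hypothesis $\height(I)=\height(I(R/(\underline{x})))$ therefore translates to
\[
\dim (R/I)_\frakm/(\underline{x})=\dim (R/I)_\frakm - r.
\]
That is, $\underline{x}$ is part of a system of parameters of the Cohen--Macaulay local ring $(R/I)_\frakm$. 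In a Cohen--Macaulay local ring every partial system of parameters is a regular sequence, so $\underline{x}$ is regular on $(R/I)_\frakm$, hence on $R/I$ by the graded remark above. By definition, $I$ then specializes to $I(R/(\underline{x}))$.

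The main obstacle is the height bookkeeping: translating the height hypothesis in $R/(\underline{x})$ into the dimension drop of $R/I$ modulo $\underline{x}$. This needs equidimensionality and catenarity, so that $\height J+\dim R/J=\dim R$ for homogeneous $J$ and in the quotient $R/(\underline{x})$. The case $\KK=\ZZ$ also requires care, since $\dim R$ there includes the arithmetic direction. I would handle it by working with the local rings at maximal ideals containing $I+(\underline{x})$ and invoking the same partial-system-of-parameters criterion there.
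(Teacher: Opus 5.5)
Your proposal is correct and follows the same route as the paper. Both arguments turn the height hypothesis into $\dim R/(I+(\underline{x}))=\dim R/I-\ell$, using $\dim R/(\underline{x})=\dim R-\ell$ together with $\height J+\dim R/J=\dim R$, and then conclude because a partial system of parameters in a Cohen--Macaulay ring is a regular sequence. Your explicit passage to $(R/I)_{\frakm}$ and back via homogeneity only spells out what the paper's last sentence leaves implicit. Your treatment of $\KK=\ZZ$ is only a sketch, but the paper does not treat that case separately or more carefully either.
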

\begin{proof}
    Let $\ell$ be the number of elements in $\underline{x}$. Since $\underline{x}$ is regular on $R$, 
    \[
    \dim(R/(\underline{x}))=\dim R - \ell.
    \]
    Then,
    \begin{align*}
        \dim(R/(I,\underline{x}))
        &=\dim(R/\underline{x})-\height(I(R/(\underline{x})))\\
        &=\dim R - \ell - \height(I)\\
        &=\dim(R/I)-\ell.
    \end{align*}
    As $R/I$ is Cohen--Macaulay, we can conclude that $\underline{x}$ is a regular sequence on $R/I$. 
\end{proof}

\begin{exa}
    Let $R=\KK[a,b,c,d,e,f]$ and $I=I_2\begin{pmatrix}
        a & b & c\\
        d & e & f
    \end{pmatrix}=(ae-bd,af-cd,bf-ce).$ It is well-known that $\height(I)=2$ and that $R/I$ is Cohen--Macaulay. Consider two sequences $\underline{x}:=c,f$ and $\underline{y}:=b-d,c-e$; clearly both are regular sequences on $R$. However, $\underline{x}$ is not a regular sequence on $R/I$ as $bf=0$ in $R/(I+(c))$. Moreover, 
    \[
        \height(I(R/(\underline{x})))=\height(ae-bd)=1<\height(I).   
    \]
    On the other hand, 
    \[
        \height(I(R/(\underline{y})))=\height(ac-b^2,af-bc,bf-c^2)=2,
    \] 
    since $R/(I+(\underline{y}))\cong \KK[u^3,u^2v,uv^2,v^3]$ is a Veronese subring of a polynomial ring in two variables.  Proposition \ref{prop:specialization} then confirms that the Veronese ideal $(ac-b^2,af-bc,bf-c^2)$ in $\KK[a,b,c,f]$ is a linear specialization of $I$ (without testing that $b-d,c-e$ is regular on $R/I$ explicitly).
\end{exa}

\section{Arithmetic rank of generic residual intersections of submaximal Pfaffians}
\label{sec:genericRI} 

\begin{setup}\label{setup}
Choose integers $n,s$ with $n\geq 2$ and $s\geq3$. Let  $X_n=(x_{ij})_{i<j}$ be an $n \times n$ alternating matrix of indeterminates and  $Y_{n,s}$  an $n \times s$ matrix of indeterminates. Let $\KK$ be a field or the integers, and denote 
\[
R_{n,s}\coloneqq\KK[X_n, Y_{n,s}].
\]
Then consider the matrix 
\[
M_{n,s}=\begin{pmatrix}X_{n}&Y_{n,s}\\-Y_{n,s}^{\tr}&0_{s,s}\end{pmatrix}
\] 
and define $P(n,s)\subseteq R_{n,s}$ to be the ideal generated by the Pfaffians of $M_{n,s}$ (of all possible sizes) that involve at least the entire top left block $X_{n}$.  Moreover, let $I(n,s)$ denote the ideal generated by only those Pfaffian generators of $P(n,s)$ that are not $n$-minors of $Y$.  Set \[
d(n,s)=\binom{s}{2}-\binom{s-n}{2}+1.
\]
When there is no danger of confusion, we may drop the labels $n$ and $s$ from the above notations.
\end{setup}

\begin{exa}\label{exa:P(n,s)}
    For $n=2$ and $s=4$, 
    \[M_{2,4}=\begin{pmatrix}
              0 & x_{12}  & y_{11} & y_{12} & y_{13} & y_{14}\\
        -x_{12} & 0       & y_{21} & y_{22} & y_{23} & y_{24}\\
        -y_{11} & -y_{21} & 0      & 0      & 0      & 0     \\
        -y_{12} & -y_{22} & 0      & 0      & 0      & 0     \\
        -y_{13} & -y_{23} & 0      & 0      & 0      & 0     \\
        -y_{14} & -y_{24} & 0      & 0      & 0      & 0     
    \end{pmatrix}.\]
    Note that $M_{2,4}$ has one Pfaffian of size two containing $X$, $\binom{4}{2}=6$ such of size four, and one such of size six. The size two Pfaffian is $x_{12}$, the size four Pfaffians are the various $y_{1i}y_{2j}-y_{1j}y_{2i}$, and the size six one is $0$. Thus 
    \[
    P(2,4)=(x_{12},\{y_{1i}y_{2j}-y_{1j}y_{2i}\:|\: 1\leq i<j\leq 4\}).
    \]

    For $n=5$ and $s=8$, $M$ is a $13\times 13$ alternating matrix. Here, $P(5,8)$ has as its generators $\binom{8}{1}=8$ Pfaffians of size six, $\binom{8}{3}=56$ Pfaffians of size eight, and $\binom{8}{5}=56$ Pfaffians of size ten. The alternating submatrices of $M_{5,8}$ of size twelve containing $X$ have `too many' zeroes (more specifically, the entire bottom right $7\times 7$ submatrix has only zero entries) and hence, Pfaffians equal to zero.
\end{exa}

We now make some remarks to begin our cohomological dimension and arithmetic rank computations.

\begin{rmk}\label{rmk:n=2}
For $n=2$, observe that
\[P(2,s)=(x_{12})+I_2 
\begin{pmatrix}
    y_{11} & y_{12} & \cdots & y_{1s} \\
    y_{21} & y_{22} & \cdots & y_{2s}
\end{pmatrix}.
\] The height of the ideal of $2$-minors of a $2\times s$ matrix is $s-1$ and the arithmetic rank is $2s-3$ \cite{BrunsSchwanzl}. Moreover, the arithmetic rank is detected by the nonvanishing of local cohomology (in characteristic zero) or \'etale cohomology (in any characteristic). In particular, $\height(P(2,s))=s$ and $\ara(P(2,s))=2s-2$ (see Remark \ref{rmk:ara-realized-by-local-cohomology}).  
\end{rmk}

\begin{rmk}\label{rmk:P(3,s)}
For $n=3$, a close look at the generators yields that
\[
P(3,s)=I_1\bigg(Y^{\tr}\begin{pmatrix}
    x_{23}\\
    -x_{13}\\
    x_{12}
\end{pmatrix}\bigg)+I_3(Y),
\] which equals the generic residual intersection of the ideal of variables $(x_{13},x_{23},x_{12})$ (cf. \cite[Example 3.4]{HunekeUlrichRI}). The arithmetic rank of generic residual intersections of ideals of variables has been computed \cite[Theorem 4.7]{BMSMP}, and in this case, equals $3s-5$. 
\end{rmk}

\begin{rmk}
Observe that the $n$-minors of $Y$, if they exist, are among the generators of $P(n,s)$. Indeed, if we consider the alternating submatrix of $M_{n,s}$ corresponding to the columns of $X$ and any $n$ columns of $Y$, the Pfaffian of this submatrix is the $n$-minor of $Y$ defined by the chosen $n$ columns of $Y$. However, if we were to consider the submatrix corresponding to $X$ and $m$ columns of $Y$, where $m>n$, the resulting Pfaffian would be $0$, as in Example \ref{exa:P(n,s)}.
\end{rmk}

In Setup \ref{setup}, we defined $I(n,s)$ to be the ideal generated by all the generators of $P(n,s)$ except the $n$-minors of $Y$. From the above discussion, we have that 
\[
P(n,s)=I(n,s)+I_n(Y) \text{ and } I(n,s)\subseteq(X).\qedhere
\] 
Denote each generator of $P(n,s)$ by indicating the columns of $Y$ chosen in the corresponding alternating submatrix. For instance, $[\;]$ denotes the Pfaffian of the submatrix $X$, and $[1,4,5]$ denotes the Pfaffian of the alternating submatrix formed by $X$ and the first, fourth and fifth columns of $Y$. Observe that if $n$ is odd, then $[i_1,\ldots,i_t]=0$ if $t$ is even and if $n$ is even, $[i_1,\ldots,i_t]=0$ if $t$ is odd. Denote the set of nonzero generators of $P(n,s)$ by $\mathcal{P}$, that is,
\[ 
\mathcal{P}\colonequals \{[i_1,\ldots,i_t]\:|\:0\leq t\leq n,\: t\equiv n \text{ mod } 2,\: 1\leq i_1<\cdots<i_t\leq s\}. 
\]

Similarly, denote the set of nonzero generators of $I(n,s)$ as $\mathcal{I}$. 

We next produce an upper bound on the arithmetic rank of $P(n,s)$ by producing a set of set-theoretic generators of $P(n,s)$. To do so, we analyse the structure of the `auxiliary ring' $\KK[\mathcal{P}]$. Assign 
\[
\deg(x_{i,j})=2 \text{ and }\deg(y_{i,j})=1;
\]
then each element of $\mathcal{P}$ (and $\mathcal I$) is homogeneous and $\KK[\mathcal{P}]$ is a non-negatively graded $\KK$-algebra. 

\subsection{Upper bound: The auxiliary ring $\KK[\mathcal{P}]$ is a symmetric ASL}

We refer to Subsection \ref{subsec-ASLintro} for language, and define a partial order $<$ on $\mathcal{P}$ as follows:
\[
[i_1,\ldots,i_t]\leq[j_1,\ldots,j_s] \text{ iff } t\geq s \text{ and } i_k\leq j_k \text{ for all } k\in\{1,\ldots,s\}.
\]

\begin{exa}
    Let $n=4$ and $s=4.$ The partial order on $\mathcal{P}$ is depicted by the following Hasse diagram:
    \[
    \begin{tikzcd}[cramped, column sep=small, row sep=small,baseline=(bottom.base)]
        && {[\;]} && \\
        \\
        && {[3,4]} \\
        \\
        && {[2,4]} \\
        \\
        {[1,4]} &&&& {[2,3]} \\
        \\
        && {[1,3]} \\
        \\
        && {[1,2]} \\
        \\
        && |[alias=bottom]| {[1,2,3,4]}
        \arrow[no head, from=1-3, to=3-3]
        \arrow[no head, from=3-3, to=5-3]
        \arrow[no head, from=5-3, to=7-1]
        \arrow[no head, from=5-3, to=7-5]
        \arrow[no head, from=7-1, to=9-3]
        \arrow[no head, from=7-5, to=9-3]
        \arrow[no head, from=9-3, to=11-3]
        \arrow[no head, from=11-3, to=13-3]
    \end{tikzcd} \qedhere
    \]
\end{exa}

\noindent We now introduce a result on Pfaffians which follows directly from an identity proved by Kustin. 

\begin{lem}\label{lem:pfaffstr}
    Let $M$ be a generic alternating matrix with entries from a Noetherian ring $R$. Define a partial order $<$ on the set of Pfaffians of $M$ (as above): 
    \[
    [i_1,\ldots,i_t]\leq[j_1,\ldots,j_s] \text{ iff } t\geq s \text{ and } i_k\leq j_k \text{ for all } k\in\{1,\ldots,s\}.
    \]
    Given any two incomparable Pfaffians $\alpha$ and $\beta$ of $M$, we can express $$\alpha\beta=\sum_{i}c_i\gamma_i\delta_i,$$ where $c_i\in \ZZ$, and $\gamma_i$ and $\delta_i$ are Pfaffians of $M$ such that $\alpha>\gamma_i$ and $\beta<\delta_i$ for all $i$. Here we use the convention that $[\emptyset]=1\in R$, where $\emptyset$ is the empty set.

    Moreover, the sum can be arranged such that whenever an index $j$ occurs in both $\alpha$ and $\beta$, it occurs in both $\gamma_i$ and $\delta_i$ for every nonzero summand.
\end{lem}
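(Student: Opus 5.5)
We would derive the lemma from Kustin's quadratic Pfaffian identity, which is the Pfaffian analogue of the Plücker relations. It reads as follows. For index sets $\mathbf{a}=(a_1,\ldots,a_t)$ and $\mathbf{b}=(b_1,\ldots,b_u)$ with $t\ge u$ (indices sorted, parities arranged so both Pfaffians are nonzero), take a position $k\le u$ at which comparability fails, i.e.\ $a_k>b_k$. Then
\[
\sum_{\sigma}\pm[\sigma(\mathbf{a}')]\,[\sigma(\mathbf{b}')]=0 ,
\]
where $\sigma$ runs over the shuffles redistributing the block $a_k,\ldots,a_t,b_1,\ldots,b_k$ between the two factors while preserving the sizes. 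The remaining indices $a_1,\ldots,a_{k-1}$ and $b_{k+1},\ldots,b_u$ stay fixed. This is the exact analogue of the Garnir/Plücker relation used in De Concini--Procesi's proof that Plücker coordinates form an ASL. The plan is first to recall this identity in the form proved by Kustin, with integer coefficients, valid over $\ZZ$ and hence over any $R$ by base change, and with the convention $[\emptyset]=1$.

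Next, given incomparable $\alpha=[\mathbf a]$ and $\beta=[\mathbf b]$, we order the pair so that $\alpha$ has at least as many indices. Incomparability then gives some $k$ with $a_k>b_k$, and we choose the smallest such $k$. Isolating the identity term $\alpha\beta$ in the relation expresses it as an integer combination of products $\gamma\delta$ obtained by shuffles. In each such term, $\gamma$ receives smaller indices in the initial positions (some $b_j\le b_k<a_k$ replaces an $a_i$ with $i\ge k$), so $\gamma<\alpha$ in the coordinatewise part of the order. Dually, $\delta$ receives larger indices, so $\delta>\beta$. We must also handle the size condition in the order ($t\ge s$ means "larger sets are smaller"). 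Shuffles may change sizes when the identity is applied to blocks of different lengths, and the sorted comparison must be checked after re-sorting. We expect this bookkeeping to be the main obstacle. What we would try first is to verify the comparison after sorting using the standard lemma that replacing an entry by a smaller one and re-sorting decreases every coordinate weakly. Then we would use strictness at position $k$.

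The resulting terms need not yet satisfy $\gamma_i<\alpha$ and $\delta_i>\beta$ with both being Pfaffians in standard form. So we iterate, using induction on a suitable measure, for instance the lexicographic position of the pair in the poset, or the sum of rank differences, which is bounded because the poset is finite. Any term $\gamma\delta$ that is still incomparable is re-expressed in the same way. Comparable terms we keep, arranging the factor order so that the smaller one is $\gamma$. Finiteness of the poset guarantees termination.

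For the final assertion, we note that an index $j$ appearing in both $\mathbf a$ and $\mathbf b$ can be moved by a shuffle only into the factor that already contains it. In that case the Pfaffian has a repeated index and vanishes (a Pfaffian with a repeated row is zero). Hence in every nonzero summand each common index remains in both $\gamma_i$ and $\delta_i$. This property is preserved under the iteration, since common indices of $\alpha,\beta$ stay common indices of each $\gamma,\delta$.
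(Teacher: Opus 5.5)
Your strategy of deriving the lemma from Kustin's shuffling identity is the paper's, but there is a genuine gap at the step ``we order the pair so that $\alpha$ has at least as many indices.'' This is not a harmless normalization. The conclusion $\gamma_i<\alpha$, $\delta_i>\beta$ is asymmetric, and the lemma is claimed for every ordered incomparable pair. That asymmetry is the point of the lemma: the proof of Theorem~\ref{thm:ASL} applies it to both $\alpha\beta$ and $\beta\alpha$ and compares the two standard expansions. The remark after the lemma also says that the relations in the literature are stated with the longer factor first, and that the orientation with the \emph{shorter} factor first is what is missing.

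So your argument only covers $|\alpha|\ge|\beta|$. In the complementary case your recipe may not even start. If $\alpha_k\le\beta_k$ for all $k\le|\alpha|$, incomparability comes from the length condition alone, and there is no position with $\alpha_k>\beta_k$. Take $\alpha=[1,2]$, $\beta=[1,3,4,5]$. Any $\gamma<[1,2]$ must have at least four indices and begin with $1,2$. The relation that works is
\[
[1,2][1,3,4,5]=\pm[1,2,4,5][1,3]\pm[1,2,3,5][1,4]\pm[1,2,3,4][1,5].
\]
It comes from expanding, along the second copy of $1$, the Pfaffian of the alternating matrix indexed by $1,1,2,3,4,5$, which vanishes because two rows coincide. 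In it the shorter first factor absorbs indices of $\beta$. The paper handles this case with \cite[Proposition~6.6]{Kustin} for $A=\{\alpha_1,\ldots,\alpha_q\}$, $C=\{\beta_1,\ldots,\beta_{q+1}\}$, $B=\{\beta_{q+2},\ldots,\beta_r\}$, $M=0$, $Q=r-q-2$. All other cases are handled at the first violating position $i+1$, whichever factor is longer, with $C=\{\alpha_{i+1},\ldots,\alpha_q,\beta_1,\ldots,\beta_{i+1}\}$, $M=q-i$, $Q=r-i-2$.

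The identity you recall is also wrong as stated. For Pfaffians there is no size-preserving shuffle relation: for example, $[1,2][3,4]-[1,3][2,4]+[1,4][2,3]=[1,2,3,4]\neq0$. So every straightening relation contains products whose factors have changed length, with $[\emptyset]=1$. Now $\gamma<\alpha$ forces $\gamma$ to have at least as many indices as $\alpha$, and $\delta>\beta$ forces $\delta$ to have at most as many as $\beta$. The real content is therefore an identity in which lengths only move in the permitted direction. Your sorting argument about replacing entries by smaller ones does not address this; in Kustin's identity it is exactly what the parameters $M$ and $Q$ control.

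No iteration is needed either. The lemma does not ask for standard monomials; straightening to standard form happens in Theorem~\ref{thm:ASL}. Moreover, your iteration reorders any pair $\gamma\delta$ with $|\gamma|<|\delta|$, so it yields inequalities for swapped roles that no longer chain with $\gamma<\alpha$ and $\delta>\beta$. Your argument for the ``moreover'' clause is sound once the correct identity is in place.
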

\begin{proof}
    Let $\alpha=[\alpha_1,\ldots,\alpha_q]$ and $\beta=[\beta_1,\ldots,\beta_r]$. As Pfaffians of odd order are zero, we may assume that $q$ and $r$ are even. If $q<r$ and $\alpha_i\leq\beta_i$ for $i\in\{1,\ldots,q\}$, let $A=\{\alpha_1,\ldots,\alpha_q\}$, $B=\{\beta_{q+2},\ldots,\beta_r\}$, and $C=\{\beta_1,\ldots,\beta_{q+1}\}.$ Applying \cite[Proposition 6.6]{Kustin} with $M=0$ and $Q=r-q-2$ gives us the desired result.
    
    In all the remaining cases, there exists $0\leq i<\min\{q,r\}$ such that $\alpha_j\leq\beta_j$ for all $j\leq i$ and $\alpha_{i+1}>\beta_{i+1}$. Here, let $A=\{\alpha_1,\ldots,\alpha_{i}\}$, $B=\{\beta_{i+2},\ldots,\beta_{r}\},$ and $C=\{\alpha_{i+1},\ldots,\alpha_q,\beta_1,\ldots,\beta_{i+1}\}$. Once again, an application of \cite[Proposition 6.6]{Kustin} with $M=q-i$ and $Q=r-i-2$ produces our result. 
\end{proof}

\begin{rmk}
    Lemma \ref{lem:pfaffstr} provides a shuffling relation on
    Pfaffians that addresses a gap in the derivation of the
    classical Pfaffian straightening law from the shuffling
    relations in \cite[Equation (6.3)]{Kustin} or
    \cite[Theorem]{ChiriviMaffei}.

    These relations are presented with the longer factor first, and their terms need not satisfy the order inequalities required of a straightening relation. To obtain both inequalities $\gamma_{i1}<\alpha$ and $\gamma_{i1}<\beta$ (see Definition \ref{def:ASL}), one straightens $\alpha\beta$ and $\beta\alpha$ separately and compares the resulting
    standard expansions using the linear independence of standard monomials (see for instance, the proof of Theorem \ref{thm:ASL}). This argument requires shuffling relations with the roles of the factors fixed, including when the first factor is shorter than the second. Lemma \ref{lem:pfaffstr} supplies the missing orientation and thereby completes this argument.

    The classical Pfaffian straightening law establishes that the $\KK$-algebra generated by the Pfaffians of a generic alternating matrix is an ASL. This result has been used extensively in the literature; see, for example, \cite{Barile,DeNegriSbarra}.
\end{rmk}

We represent each standard monomial in $\mathcal{P}$ as a tableau. Write the factors of a given standard monomial in weakly increasing order in $\mathcal{P}$. The Pfaffian $[i_1,\ldots,i_t]$ contributes 
a row with entries $i_1,\ldots,i_t$ to the tableau. If the element $[\;]$ is a factor, it contributes a row with the single entry $s+1$. For instance, with $s=n=4$ as in the example above, the product $[1,2,3,4][1,4]^2[2,4][3,4][\;]$ is represented by the diagram 
\[
\begin{ytableau} 
    1 & 2 & 3 & 4 \\ 
    1 & 4 \\ 
    1 & 4 \\
    2 & 4 \\ 
    3 & 4 \\
    5
\end{ytableau}
\]
The definition of the order shows that in a standard tableau: the row lengths weakly decrease, the entries in each row are strictly increasing, and the entries in each column are weakly increasing.

\begin{thm}\label{thm:ASL}
    The auxiliary ring $\KK[\mathcal{P}]$ is a symmetric ASL (see Definition \ref{def:symASL}). 
\end{thm}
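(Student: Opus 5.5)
We must verify (ASL-1) and (ASL-2), together with its reversed-order version. The plan is to obtain both from Lemma \ref{lem:pfaffstr}, applied to the alternating matrix $M_{n,s}$.

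\textbf{Generation by standard monomials.} Index the rows and columns of $M_{n,s}$ so that the block $X$ comes first. A Pfaffian of $M_{n,s}$ involving all of $X$ is then determined by its chosen $Y$-columns. The order in Lemma \ref{lem:pfaffstr}, restricted to these Pfaffians, should coincide with the order on $\mathcal P$ after relabelling: the common $X$-indices form a common prefix of both index sets, and longer sets are smaller in both orders. Take two incomparable $\alpha,\beta\in\mathcal P$ and write $\alpha\beta=\sum c_i\gamma_i\delta_i$. By the ``moreover'' clause, every index common to $\alpha$ and $\beta$ appears in both $\gamma_i$ and $\delta_i$, so each $\gamma_i,\delta_i$ again contains all of $X$. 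Hence they lie in $\mathcal P\cup\{0\}$, and the zero Pfaffians simply drop out. We have $\gamma_i<\alpha$ and $\delta_i>\beta$.

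Next we need a termination measure. For a product of two elements, use the pair (smaller factor, larger factor) in a suitable lexicographic sense, or a nonnegative integer such as $\sum_k(\text{row length})^2$ minus column sums on the associated tableau. Each shuffle should strictly improve this measure. Since $\mathcal P$ is finite and we work degree by degree, iterating the shuffles expresses every monomial as a $\ZZ$-combination of standard monomials. In particular the standard monomials span $\KK[\mathcal P]$ over $\KK$.

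\textbf{Linear independence (ASL-1).} This is the main obstacle. First I would show that leading terms of standard monomials are distinct, for a suitable monomial order on $\KK[X,Y]$. Choose a diagonal-type order, with $x$'s and $y$'s ordered compatibly with the tableau reading. The initial term of $[i_1,\dots,i_t]$ should then be a product of the antidiagonal entries $x$ and $y_{\cdot,i_k}$. The standard tableau should be recoverable from the product of these initial terms via an RSK-style argument, by reading off column content row by row. If this direct route becomes messy, the fallback is a dimension/Hilbert series comparison. Alternatively, reduce to the classical Pfaffian ASL of the generic alternating $(n+s)\times(n+s)$ matrix: there, standard monomials of Pfaffians are linearly independent (Lemma \ref{lem:pfaffstr} plus the classical theory). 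Then specialize the bottom-right block to $0$. Here one must argue that a standard relation among elements of $\mathcal P$ cannot be killed by this specialization. I would try to exhibit a grading or initial-term argument showing that the specialization is injective on the span of standard monomials in Pfaffians containing $X$. This works over $\ZZ$, and hence over any $\KK$.

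\textbf{(ASL-2) and symmetry.} Once linear independence holds, the standard expansion of $\alpha\beta$ is unique. Straighten $\alpha\beta$ by repeatedly applying Lemma \ref{lem:pfaffstr}. Since shuffling only replaces the first factor by smaller elements and the second by larger ones, every standard monomial that arises has minimal factor $<\alpha$ and maximal factor $>\beta$. Straighten $\beta\alpha$ the same way, with the roles swapped: the minimal factor is $<\beta$ and the maximal factor is $>\alpha$. Both expansions are expansions of the same element, so by uniqueness they agree. Therefore every standard monomial in the expansion has first factor below both $\alpha$ and $\beta$ and last factor above both. This gives (ASL-2) together with the additional condition in Definition \ref{def:symASL}, so $\KK[\mathcal P]$ is a symmetric ASL.
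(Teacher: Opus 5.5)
Your derivation of the straightening relations and of the symmetric condition matches the paper's argument. You straighten $\alpha\beta$ and $\beta\alpha$ separately and compare the two standard expansions, and you correctly use the ``moreover'' clause of Lemma~\ref{lem:pfaffstr} to keep $\gamma_i,\delta_i$ inside $\mathcal P\cup\{0\}$.

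The genuine gap is (ASL-1). You never prove that the standard monomials are linearly independent; you only list three possible routes. Both the axiom itself and your uniqueness step for (ASL-2) depend on this. The fallbacks do not close the gap:
\begin{itemize}
\item You have no independent source for the Hilbert series of $\KK[\mathcal P]$, so a dimension comparison has nothing to compare against.
\item The reduction to the classical Pfaffian ASL needs the specialization to be injective on the span of standard monomials in $\mathcal P$. That is precisely the statement to be proved, and you give no argument for it.
\end{itemize}

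The initial-term route does work, but the choice of order is the whole point, and the ``antidiagonal'' initial terms you describe fail. Take the lexicographic order with $y_{1,s}>y_{1,s-1}>\cdots>y_{1,1}>y_{2,s}>\cdots>y_{n,1}>x_{1,2}>\cdots>x_{n-1,n}$. Then $\init[i_1,\ldots,i_t]=y_{1,i_t}y_{2,i_{t-1}}\cdots y_{t,i_1}x_{t+1,t+2}\cdots x_{n-1,n}$. For $n=4$ and $s=6$, the distinct standard monomials $[1,2,5,6]\,[3,4]$ and $[1,2,3,4]\,[5,6]$ both have initial monomial $y_{1,4}y_{1,6}y_{2,3}y_{2,5}y_{3,2}y_{4,1}x_{3,4}$.

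What works is the diagonal choice $y_{1,1}>y_{1,2}>\cdots>y_{1,s}>y_{2,1}>\cdots>y_{n,s}>x_{1,2}>\cdots>x_{n-1,n}$. This gives $\init[i_1,\ldots,i_t]=y_{1,i_1}\cdots y_{t,i_t}\,x_{t+1,t+2}\cdots x_{n-1,n}$. In the initial monomial of a standard monomial, the exponent of $y_{j,i}$ is the number of entries $i$ in column $j$ of its tableau. The exponent of $x_{1,2}$ is the number of factors $[\;]$. Since the columns of a standard tableau are weakly increasing, these column multisets determine the tableau. Hence distinct standard monomials have distinct initial monomials, each with leading coefficient $\pm1$. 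This gives linear independence over any field and over $\ZZ$.

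A smaller point: your termination measure for straightening longer monomials is left unspecified and unverified. Lemma~\ref{lem:pfaffstr} only gives $\gamma_i<\alpha$ and $\delta_i>\beta$, so a rank-type measure would need each shuffle to be oriented with care. It is simpler to straighten only products of two elements, as the paper does; there termination is automatic because the first factor strictly decreases in the finite poset $\mathcal P$. Spanning then follows from the resulting two-sided relations.
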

\begin{proof} 
    We begin by showing that the standard monomials in $\mathcal{P}$ are linearly independent over $\KK$. To do so, we consider the lexicographic monomial order on $\KK[X,Y]$ defined by 
    $$y_{1,1}>y_{1,2}>\cdots>y_{1,s}>y_{2,1}>\cdots>y_{n,s}>x_{1,2}>\cdots>x_{1,n}>x_{2,3}>\cdots>x_{n-1,n}.$$ The initial monomial of $[i_1,\ldots,i_t]$ in $\KK[X,Y]$ with respect to this monomial order is $$\init([i_1,\ldots,i_t])=y_{1,i_1}y_{2,i_2}\ldots y_{t,i_t}x_{t+1,t+2}x_{t+3,t+4}\ldots x_{n-1,n}.$$
    To prove the $\KK$-linear independence of the standard monomials, it suffices to prove that every standard monomial has a distinct initial monomial. Consider a standard monomial $\mu$ represented by a standard tableau $T$. Define 
    \[
    d^X_{i,j}\colonequals\deg_{x_{i,j}}(\init(\mu)) \text{ and } d^Y_{i,j}\colonequals\deg_{y_{i,j}}(\init(\mu))
    \]
    The following observations show that the initial monomial of $\mu$ completely determines the multiset of entries in the tableau $T$:
    \begin{enumerate}
        \item The number of rows of $T$ is equal to the sum $$d^X_{1,2}+\sum_{i=1}^{s}d^Y_{1,i}.$$ The term $d^X_{1,2}$ is nonzero exactly if $[\;]$ is one of the elements of $\mathcal{P}$ appearing in the product $\mu$.
        \item If $d^X_{1,2}\neq0$, the multiset of entries in the first column of $T$ is $$\{s+1\}^{d^X_{1,2}}\cup\bigcup_{i=1}^s \{i\}^{d^Y_{1,i}},$$ where $\{i\}^d$ is the multiset with $d$ entries of $i$. If $d^X_{1,2}=0$, the multiset of entries in the first column of $T$ is $\bigcup_{i=1}^s \{i\}^{d^Y_{1,i}}$.
        \item For $j>1$, the multiset of entries in the $j^{th}$ column of $T$ is $$\bigcup_{i=1}^s \{i\}^{d^Y_{j,i}}.$$ 
    \end{enumerate}

    As any multiset can be uniquely arranged as a weakly increasing sequence and the columns of standard tableaux must be weakly increasing, the initial monomial $\init(\mu)$ uniquely defines a standard tableau $T$, and vice-versa. Therefore distinct standard monomials have distinct initial monomials and so the initial monomial of $\mu$ determines the tableau. 
    
    The fact that the standard monomials span $\KK[\mathcal{P}]$ as a $\KK$-module follows from the argument below which also proves that the products of incomparable elements in $\mathcal{P}$ can be `straightened'. Let $\alpha$ and $\beta$ be incomparable elements of $\mathcal{P}$. By Lemma \ref{lem:pfaffstr}, $$\alpha\beta=\sum_{i}c_i\gamma_i\delta_i$$ for $c_i\in \ZZ$, $\gamma_i,\delta_i\in \mathcal{P}$ such that $\alpha>\gamma_i$ and $\beta<\delta_i$ for all $i$. If $\gamma_i<\delta_i$ for all $i$, we are done. Suppose $\gamma_i\not<\delta_i$ for some $i$. As $\alpha$ and $\beta$ are incomparable, $\gamma_i\not>\delta_i$. Hence, $\gamma_i$ and $\delta_i$ are incomparable, and we may once again apply Lemma \ref{lem:pfaffstr} to the product $\gamma_i\delta_i$. 
    
    As $\mathcal{P}$ is a finite poset, we eventually obtain that $\alpha\beta$ is an integer combination of standard monomials $\gamma\delta$ such that $\gamma<\delta$, $\alpha>\gamma$ and $\beta<\delta$. We can now do the same for the product $\beta\alpha$. As the standard monomials are $\KK$-linearly independent (and $\alpha\beta=\beta\alpha$ in the commutative ring $\KK[\mathcal{P}])$, we obtain the same linear combination. This shows that for every standard monomial $\gamma\delta$ appearing in the expression of $\alpha\beta$, we have $\alpha,\beta>\gamma$ and $\alpha,\beta<\delta.$
\end{proof}

We can now leverage Proposition \ref{prop:SOPofASL} to obtain the dimension of $\KK[\mathcal{P}]$. 

\begin{cor}\label{cor:dimauxring}
     The ideal $(\mathcal{P})$ in $A=\KK[\mathcal{P}]$ is generated up to radical by the set
    \[
    \left\{ \sum_{\substack{\operatorname{rank}(\mu)=i}} \mu \mid i=1,\ldots,\binom{s}{2}-\binom{s-n}{2}+1 \right\}.
    \]
    Thus, the Krull dimension of $\KK[\mathcal{P}]$ is $\binom{s}{2}-\binom{s-n}{2}+1$ if $\KK$ is a field, and $\binom{s}{2}-\binom{s-n}{2}+2$ if $\KK=\ZZ$.
    
\end{cor}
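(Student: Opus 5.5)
The corollary will follow from Theorem \ref{thm:ASL} together with Proposition \ref{prop:SOPofASL}. The one real task is to compute $\rank(\mathcal{P})$ and show that it equals $\binom{s}{2}-\binom{s-n}{2}+1$.

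Granting that rank computation, the first assertion is immediate. Proposition \ref{prop:SOPofASL} says that the elements $x_i=\sum_{\rank(\mu)=i}\mu$ generate $(\mathcal P)$ up to radical. For the dimension, $A$ is graded (with the degrees fixed above), generated in positive degrees by $\mathcal P$, and $A_0=\KK$. So $A/\sqrt{(\mathcal P)}=\KK$. Over a field, $\dim A$ equals the number of homogeneous elements needed to generate the irrelevant ideal up to radical, which is at most $\rank(\mathcal P)$ by the above. For the reverse inequality I use the standard fact that a graded ASL has dimension $\rank(\mathcal H)$ (Bruns--Vetter). The reason is that $A$ degenerates to the discrete ASL, the Stanley--Reisner ring of the order complex, whose dimension is the maximal chain length; equivalently, the standard monomials supported on a maximal chain already give polynomial growth of that order. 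Over $\ZZ$, the algebra $A$ is free over $\ZZ$ by (ASL-1), and $A/\sqrt{(\mathcal P)}=\ZZ$, so the dimension goes up by $\dim\ZZ=1$.

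For the rank, a chain in $\mathcal P$ is a sequence of strictly increasing index sets. Going up the poset means either shortening the set by two (from $t$ to $t-2$, keeping the first $t-2$ entries weakly larger) or, at fixed length, increasing one entry by one. I would produce an explicit maximal chain and count its length. Start at the bottom element, which is $[1,\ldots,n]$ if $n\le s$, and otherwise the shortest-to-longest extreme $[1,\ldots,t_{\max}]$ with $t_{\max}\le s$, $t_{\max}\equiv n \pmod 2$. Then move upward, always preferring single increments, and drop two entries only when forced. The length of the chain is the number of increments plus the number of drops plus one. The increments move each entry $i_k$ from $k$ toward $s-t+k$, and the drops should be arranged so that the total equals $\sum_{t}\bigl(t(s-t)\bigr)$-type contributions, collapsing to $\binom{s}{2}-\binom{s-n}{2}$ before adding one for the top element $[\;]$.

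For the upper bound, I would define a function $\phi$ on $\mathcal P$ that increases by at least one on every covering relation and whose range has the right size. For $[i_1,\ldots,i_t]$, one candidate is
\[
\phi=\sum_k (i_k-k)+c(t),
\]
with $c(t)$ chosen so that a drop of length by two (with weakly increased entries) still raises $\phi$.

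Matching this with the chain count is the step I expect to be the main obstacle. It requires checking carefully how $c(t)$ must behave across the parity classes, and seeing that the $\binom{s-n}{2}$ correction arises exactly when $n<s$ and vanishes when $s\le n$, through the binomial convention. The sanity checks are $n=2$, giving $2s-2$ as in Remark \ref{rmk:n=2}, and $n=s=4$, giving rank $7$ in the Hasse diagram above.
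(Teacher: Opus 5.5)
Your reduction is the same as the paper's: Theorem \ref{thm:ASL} and Proposition \ref{prop:SOPofASL} reduce everything to computing $\rank(\mathcal P)$. Your dimension arguments are fine. The lower bound via the polynomial subring on a maximal chain is correct, and the $\ZZ$ step, though terse, follows either from $\dim A=\dim S+\rank\mathcal H$ for an ASL over $S$, or from $A\subseteq\ZZ[X,Y]$ being a finitely generated domain with $\dim A=\dim(A\otimes\QQ)+1$.

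The gap is that the rank itself, which is the whole content of the corollary, is never computed. You leave $c(t)$ undetermined, you do not count the greedy chain, and you flag the matching of the two bounds as an unresolved obstacle. (Also, for odd $n$ the top element is $[s]$, not $[\;]$; the latter is not in $\mathcal P$.)

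The missing idea is to choose $c(t)$ so that $\phi$ rises by \emph{exactly} one on every covering relation. Then you need neither an explicit chain nor a separate upper bound.

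\begin{itemize}
\item Same-length covers raise a single entry by one, so $\phi$ rises by one there.
\item A length-dropping cover must have the form $[i_1,\ldots,i_{t-2},s-1,s]<[i_1,\ldots,i_{t-2}]$. Otherwise, raising an entry of the smaller element, or deleting its last two entries, produces an intermediate element.
\item So you need $c(t-2)-c(t)=2(s-t)+1$. For instance $c(t)=\binom{t+1}{2}-ts$ works, and it turns $\phi$ into $\sum_k(i_k-s)$, which is exactly the function $w$ used in the paper.
\end{itemize}

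With this choice, $\mathcal P$ is graded. It has unique minimum $[1,\ldots,q]$ and unique maximum $[s]$ or $[\;]$, both with $\phi=0$. Hence every maximal chain has $\phi(\max)-\phi(\min)+1=qs-\binom{q+1}{2}+1$ elements. Take $q=n$ if $s\ge n$, and $q=s$ or $q=s-1$ (according to whether $s\equiv n\pmod 2$) if $s<n$. In every case this equals $\binom{s}{2}-\binom{s-n}{2}+1$, using $\binom{a}{2}=0$ for $a<0$.
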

\begin{proof}
    By Proposition \ref{prop:SOPofASL}, it suffices to show that the poset $\mathcal P$ is ranked and has rank $\binom{s}{2}-\binom{s-n}{2}+1.$ 
    
    The function 
    \[
    w([i_1,\ldots,i_t])=\sum_{k=1}^t (i_k-s)
    \]
    (with $w([\;])=0$) increases by exactly one along any upward covering relation. Thus, $\mathcal{P}$ is ranked, and the length of the longest chain in $\mathcal{P}$ (its rank), is one more than the difference between the value of $w$ at the smallest and largest elements of $\mathcal{P}$. The smallest element of $\mathcal P$ is $[1,\ldots,q]$; the value of $q$ depends on the relative sizes and parities of $s$ and $n$, which we address later. The largest element of $\mathcal P$ is $[s]$ if $n$ is odd, and $[\;]$ otherwise. Hence, 
    \[
        \rank(\mathcal P)=qs-\binom{q+1}{2}+1.
    \]

    If $s\geq n$, then $q=n$. If $s<n$ and $s\equiv n \text{ mod }2$, then $q=s$, otherwise $q=s-1$. In all cases, the rank evaluates to the desired value.
\end{proof}

Recall that $d(n,s)$ denotes the quantity $\binom{s}{2}-\binom{s-n}{2}+1$.

\begin{cor}\label{cor:araI(n,s)}
    The arithmetic rank of the ideal $(\mathcal{I})$ in $\KK[\mathcal{P}]$ is $$\binom{s}{2}-\binom{s-n+2}{2}+1=d(n-2,s).$$ 
\end{cor}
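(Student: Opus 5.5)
The plan is to realize $\mathcal{I}$ as an ideal of the poset $\mathcal{P}$ with the \emph{reversed} order, and then apply Proposition \ref{prop:araofposetideal}. By Theorem \ref{thm:ASL}, $A=\KK[\mathcal{P}]$ is a symmetric ASL, so it is also an ASL over $\mathcal{P}^{\mathrm{op}}$, and it is graded. Since a symmetric ASL stays symmetric when the order is reversed, Proposition \ref{prop:araofposetideal} applies to $\mathcal{P}^{\mathrm{op}}$. So it suffices to do two things: show that $\mathcal{I}$ is an ideal of $\mathcal{P}^{\mathrm{op}}$, and compute its rank there.

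\textbf{Step 1: $\mathcal{I}$ is an upward-closed subset of $\mathcal{P}$.} The set $\mathcal{I}$ consists of the elements $[i_1,\ldots,i_t]\in\mathcal{P}$ with $t<n$. Its complement in $\mathcal{P}$ consists of the $n$-minors $[i_1,\ldots,i_n]$ of $Y$. Suppose $\alpha=[i_1,\ldots,i_t]\in\mathcal{I}$ and $\delta=[j_1,\ldots,j_r]\geq\alpha$. The definition of the order forces $r\leq t<n$, so $\delta\in\mathcal{I}$. Hence $\mathcal{I}$ is an up-set in $\mathcal{P}$, that is, an ideal of $\mathcal{P}^{\mathrm{op}}$. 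It is nonempty because $[\;]$ or $[s]$ lies in it.

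\textbf{Step 2: computing the rank.} The rank of $\mathcal{I}$ in $\mathcal{P}^{\mathrm{op}}$ is the length of the longest chain inside $\mathcal{I}$. Such a chain starts at the top element of $\mathcal{P}$, which is $[\;]$ or $[s]$. We can reuse the rank function $w$ from the proof of Corollary \ref{cor:dimauxring}. The up-set $\mathcal{I}$ is convex, and $w$ increases by exactly one along covers, so $\mathcal{I}$ is graded by $w$. The rank is therefore $1+w(\mathrm{top})-\min_{\mathcal{I}}w$. The minimum of $w$ on $\mathcal{I}$ is attained at $[1,\ldots,q']$, where $q'$ is the largest admissible length below $n$ with $q'\equiv n \pmod 2$ and $q'\leq s$. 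This gives $q'=n-2$ when $s\geq n-2$. For smaller $s$, $q'$ is $s$ or $s-1$ according to parity, exactly as in the proof of Corollary \ref{cor:dimauxring}. Indeed, $[1,\ldots,q']$ is below every element of $\mathcal{I}$ of the same length, and longer elements have smaller $w$. The same computation as before then gives rank $q's-\binom{q'+1}{2}+1$. In other words, the formula of Corollary \ref{cor:dimauxring} holds with $n$ replaced by $n-2$, which yields $d(n-2,s)=\binom{s}{2}-\binom{s-n+2}{2}+1$.

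\textbf{Main obstacle.} The delicate point is the case analysis on $q'$ when $s<n$, and matching it to the convention $\binom{a}{b}=0$ for $a<0$. The case $n=2$ also needs care: there $\mathcal{I}=\{[\;]\}$, with rank $1$, and $d(0,s)=\binom{s}{2}-\binom{s}{2}+1=1$ agrees. A second check is that Proposition \ref{prop:araofposetideal} really only needs the ASL structure for the reversed order together with a grading. The grading $\deg x=2$, $\deg y=1$ makes every element of $\mathcal{P}$ homogeneous of positive degree, so this holds.
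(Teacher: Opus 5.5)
Your proposal is correct and is the paper's argument: $\mathcal{I}$ is an up-set of $\mathcal{P}$, hence a poset ideal for the reversed order of the symmetric graded ASL $\KK[\mathcal{P}]$, and Proposition~\ref{prop:araofposetideal} reduces the claim to computing $\rank\mathcal{I}$, which you do in more detail than the paper by identifying $\mathcal{I}$ with the interval from $[1,\ldots,q']$ to the top element (i.e.\ with the poset $\mathcal{P}$ for $n-2$ in place of $n$). One small slip: the phrase ``longer elements have smaller $w$'' is false in general (for odd $n\geq 5$ and $s\geq 5$, $w([1])=1-s<-3=w([s-2,s-1,s])$), but you do not need it, since every element of $\mathcal{I}$ has length at most $q'$ and $i_k\geq k$, so $[1,\ldots,q']$ lies below all of $\mathcal{I}$.
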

\begin{proof}
    Recall that 
    \[
    \mathcal{I}=\{[i_1,\ldots,i_t]\:|\:0\leq t<n,\: t\equiv n \text{ mod } 2,\: 1\leq i_1<\cdots<i_t\leq s\}.
    \] 
    Then $\mathcal I$ is an ideal of the poset $\mathcal P$ under the $>$ that is reverse to the poset order $<$ on $\mathcal{P}$. As $\mathcal{P}$ is a symmetric ASL, 
    \begin{align*}
    \ara((\mathcal{I}))=\operatorname{rank }\mathcal{I} &=\binom{s}{2}-\binom{s-n+2}{2}+1\\
    &=d(n-2,s)
    \end{align*} by Proposition \ref{prop:araofposetideal}. Here, the rank computation for $\mathcal{I}$ follows directly from Corollary \ref{cor:dimauxring} for $s<n$ and can be concluded by a similar maximal chain construction for $s\geq n$. 
\end{proof}

\noindent By Corollary \ref{cor:dimauxring} and Corollary \ref{cor:araI(n,s)}, we can conclude that $\ara(P(n,s))\leq d(n,s)$ and $\ara(I(n,s))\leq d(n-2,s).$

\subsection{Lower bound on arithmetic rank: Cohomological obstructions}

The following lemma is a key component in our arithmetic rank computation as well as in the discussion on the top local cohomology module in Section \ref{sec:cohomology}.

\begin{lem}\label{lem:localization}
For $n>2$, let $X_{n}$, $Y_{n,s}$ and $R\coloneqq R_{n,s}$ be as in Setup \ref{setup}. Let $X'_{n-1}=(x'_{i,j})_{i<j}$ be a generic alternating $(n-1)\times(n-1)$ matrix, $Y'_{n-1,s}=(y'_{i,j})$ be a generic $(n-1)\times s$ matrix, and $R'\coloneqq \KK[X',Y']$. We define the ideal $P'(n-1,s)$ in $R'$ as in Setup \ref{setup}. Then the ring map 

\begin{eqnarray*}
    \Phi\colon R'[y_{n,1},\dots,y_{n,s-1},y_{1,s},\dots,y_{n-1,s},y_{n,s}]_{y_{n,s}}&\lto&R_{y_{n,s}},\\
    \text{with}\phantom{\rule{5cm}{0cm}} x'_{i,j}&\mapsto& x_{i,j}+\frac{y_{i,s}x_{j,n}-y_{j,s}x_{i,n}}{y_{n,s}},\\
    y'_{i,j}&\mapsto&y_{i,j}-\frac{y_{i,s}y_{n,j}}{y_{n,s}}\quad \text{ for } j<s,\\
    y_{i,j}&\mapsto&y_{i,j}\quad \text{ for } j=s\text{ or }i=n,\\
    y'_{i,s}&\mapsto&x_{i,n}, 
    \end{eqnarray*}
    is an isomorphism. Moreover, $R_{y_{n,s}}$ is a free $R'$-module  and $$P(n,s)R_{y_{n,s}}=P'(n-1,s)R_{y_{n,s}}$$ under the above isomorphism.
\end{lem}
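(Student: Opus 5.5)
Two things need to be shown: that $\Phi$ is an isomorphism (which gives freeness), and that the extended Pfaffian ideals agree.

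\textbf{Isomorphism and freeness.} I would write down the inverse explicitly. Both rings are localizations at $y_{n,s}$ of polynomial rings in the same number of variables. Indeed, $R'$ has $\binom{n-1}{2}+(n-1)s$ variables. Adjoining $y_{n,1},\dots,y_{n,s}$ and $y_{1,s},\dots,y_{n-1,s}$ gives $\binom{n-1}{2}+(n-1)s+s+(n-1)$ variables in total, which equals $\binom{n}{2}+ns$. The inverse is forced:
\begin{itemize}
\item $x_{i,n}\mapsto y'_{i,s}$;
\item $x_{i,j}\mapsto x'_{i,j}-(y_{i,s}y'_{j,s}-y_{j,s}y'_{i,s})/y_{n,s}$ for $i<j<n$;
\item $y_{i,j}\mapsto y'_{i,j}+y_{i,s}y_{n,j}/y_{n,s}$ for $i<n$, $j<s$;
\item the remaining $y$'s are fixed.
\end{itemize}
Both composites are the identity on generators, so $\Phi$ is an isomorphism. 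Under $\Phi$, $R_{y_{n,s}}$ is a localization of a polynomial ring over $R'$, namely $R'[\ldots]_{y_{n,s}}=R'\otimes_{\ZZ}\ZZ[\ldots]_{y_{n,s}}$. Since $\ZZ[\ldots]_{y_{n,s}}$ is a free $\ZZ$-module (a basis is the monomials times powers of $y_{n,s}^{-1}$), $R_{y_{n,s}}$ is free over $R'$.

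\textbf{Equality of ideals.} The key observation is that $\Phi$ is elementary row and column operations on $M=M_{n,s}$ after inverting the pivot $y_{n,s}$. Index the rows and columns of $M$ by $1,\dots,n$ (the $X$-block) and $c_1,\dots,c_s$ (the $Y$-block). Entry $(n,c_s)$ is $y_{n,s}$, which is a unit. For each $i<n$, subtract $(y_{i,s}/y_{n,s})$ times row/column $n$ from row/column $i$. Then subtract suitable multiples of row/column $c_s$ to clear row and column $n$ away from the pivot. These are congruence transformations $M\mapsto E M E^{\tr}$ with $E$ unipotent. They preserve each Pfaffian of a principal submatrix that contains the index set being used, up to sign and unit factors.

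After these operations:
\begin{itemize}
\item the $(i,j)$ entries for $i,j<n$ become $x_{i,j}+(y_{i,s}x_{j,n}-y_{j,s}x_{i,n})/y_{n,s}$;
\item the $(i,c_j)$ entries for $j<s$ become $y_{i,j}-y_{i,s}y_{n,j}/y_{n,s}$;
\item the $(i,c_s)$ entry should become $x_{i,n}$ up to sign and scaling.
\end{itemize}
I would check that last point and adjust the operation order if it does not come out that way. The upshot is that the transformed matrix is block-congruent to a direct sum. One summand is the $2\times 2$ hyperbolic block on $\{n,c_s\}$ with entry $y_{n,s}$. The other is a matrix that is exactly $\Phi(M'_{n-1,s})$, where the old column $c_s$ now plays the role of the $s$-th $Y'$-column (entries $x_{i,n}=\Phi(y'_{i,s})$). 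Also, row $c_j$ and row $c_s$ have zeros in the $Y$-block, which is why the zero block survives.

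\textbf{Matching the generators.} Take a principal submatrix of $M$ containing $X$ with column set $S\subseteq\{c_1,\dots,c_s\}$.
\begin{itemize}
\item If $c_s\in S$: the Pfaffian equals $\pm y_{n,s}$ times the Pfaffian of the $X'$-block together with $S\setminus\{c_s\}$ in the reduced matrix, and the latter does not use the $s$-th $Y'$-column.
\item If $c_s\notin S$: the operations involving $c_s$ are not available inside this submatrix. I would instead use the Pfaffian expansion along row $n$ together with the relation $[S]=\ldots$ to show that $[S]$ lies in $(\Phi([T]) : T)$ and conversely.
\end{itemize}
The cleanest route is to show both inclusions of generators. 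On one side, $\Phi([S']')$ for $c_s\in S'$ equals $\pm y_{n,s}^{-1}[S'\setminus\{c_s\}\cup\ldots]$; on the other, for $c_s\notin S'$, the expansion gives $[S\cup c_s]$-type terms. I expect this bookkeeping, i.e.\ the case where the chosen submatrix does not contain $c_s$, to be the main obstacle. My first attempt there would be the Laplace-type expansion $\mathrm{Pf}$ along the row $c_s$ of the augmented submatrix, which expresses $y_{n,s}[S]$ modulo the ideal generated by the $\Phi$-images.
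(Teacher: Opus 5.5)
Your inverse of $\Phi$ and your freeness argument are correct, and more explicit than the paper, which only says that $\Phi$ records the row and column operations. The gap is in the equality of ideals, and it has two parts.

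First, you misidentify where your row and column operations end up. You subtract $(y_{i,s}/y_{n,s})$ times row/column $n$ from row/column $i<n$, and then clear the entries $(n,c_j)$, $j<s$, using row/column $c_s$. After this:
\begin{itemize}
\item the entries $(i,c_s)$, $i<n$, are $0$, not $x_{i,n}$;
\item the $x_{i,n}$ stay in column $n$;
\item row $c_s$ has a single nonzero entry, $-y_{n,s}$, in column $n$;
\item row $n$ vanishes on $c_1,\dots,c_{s-1}$.
\end{itemize}
This matrix is not a direct sum; your two summands would have total size $n+s+1$. It is the index $n$, not $c_s$, that becomes the new $s$-th $Y'$-column. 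Forcing a direct sum would mean adding multiples of $c_s$ into the $X$-indices, and that changes the ideal: the Pfaffians containing $\{1,\dots,n\}$ would then generate only $P(n-1,s-1)$ in the $x'_{i,j}$ and the $y'_{i,j}$ with $j<s$, a different ideal.

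The correct endpoint is the paper's $M'$. Deleting $c_s$ and moving $n$ to the end gives $M''$. Then the Pfaffian on $\{1,\dots,n\}\cup S$ is:
\begin{itemize}
\item if $c_s\in S$: $\pm y_{n,s}$ times the Pfaffian of $M''$ with $Y'$-columns $S\setminus\{c_s\}$ (expand along row $c_s$);
\item if $c_s\notin S$: up to sign, exactly the Pfaffian of $M''$ with $Y'$-columns $S\cup\{s\}$.
\end{itemize}

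Second, and more seriously, you leave open exactly the case that needs an idea. For $c_s\notin S$ the individual Pfaffians are \emph{not} preserved: for $|S|=n$, the maximal minor of $Y$ on the columns $S$ becomes $0$ in $M'$. So what must be shown is that the \emph{ideal} is preserved.

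The missing observation is that both rounds of operations are congruences $M\mapsto EME^{\tr}$ with $E$ block diagonal with respect to $\{1,\dots,n\}\sqcup\{c_1,\dots,c_s\}$. This is what the paper uses when it says the new Pfaffians still generate $P(n,s)R_{y_{n,s}}$. In the minor-summation formula $\mathrm{Pf}\bigl((EME^{\tr})_T\bigr)=\sum_{U}\det(E_{T,U})\,\mathrm{Pf}(M_U)$, block diagonality kills every $U\not\supseteq\{1,\dots,n\}$ when $T\supseteq\{1,\dots,n\}$. Applying the same to $E^{-1}$ gives the reverse inclusion.

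Concretely, take $S\subseteq\{c_1,\dots,c_{s-1}\}$. The transformed Pfaffian on $\{1,\dots,n\}\cup S$ equals the original one plus $\sum_{c_j\in S}\pm\frac{y_{n,j}}{y_{n,s}}$ times the Pfaffian on $\{1,\dots,n\}\cup(S\setminus\{c_j\})\cup\{c_s\}$. These correction terms contain $c_s$, so they are unchanged by the operations, and both inclusions follow.

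Your suggested Laplace expansion along row $c_s$ of the submatrix of $M$ on $\{1,\dots,n\}\cup S\cup\{c_s\}$ does not do this. It produces Pfaffians that omit an index of $\{1,\dots,n\}$, which are not generators of $P(n,s)$, so as stated it does not close the case.
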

\begin{proof}
    Upon inverting $y_{n,s}$, we may perform (simultaneous) row and column operations to transform $M_{n,s}$ into a matrix of the form 
    \[
    M'=\begin{pmatrix}
    0      & x'_{12} & \cdots & x'_{1,n-1} & x_{1n} & y'_{11} & \cdots & y'_{1,s-1} & 0 \\
    -x'_{12}& 0      & \cdots & x'_{2,n-1} & x_{2n} & y'_{21} & \cdots & y'_{2,s-1} & 0 \\
    \vdots & \vdots & \ddots & \vdots     & \vdots & \vdots & \ddots & \vdots     & \vdots \\
    -x'_{1,n-1} & -x'_{2,n-1} & \cdots & 0 & x_{n-1,n} & y'_{n-1,1} & \cdots & y'_{n-1,s-1} & 0 \\
    -x_{1n} & -x_{2n} & \cdots & -x_{n-1,n} & 0 & 0 & \cdots & 0 & y_{n,s} \\

    - y'_{11} & -y'_{21} & \cdots & -y'_{n-1,1} & 0 & 0 & \cdots & 0 & 0 \\
    \vdots & \vdots & \ddots & \vdots & \vdots & \vdots & \ddots & \vdots & \vdots \\
    - y'_{1,s-1} & -y'_{2,s-1} & \cdots & -y'_{n-1,s-1} & 0 & 0 & \cdots & 0 & 0 \\
    0 & 0 & \cdots & 0 & -y_{n,s} & 0 & \cdots & 0 & 0
    \end{pmatrix}
    \] where $x'_{ij}=x_{ij}+\frac{y_{i,s}x_{j,n}-y_{j,s}x_{i,n}}{y_{n,s}}$ and $y'_{ij}= y_{ij}-\frac{y_{i,s}y_{n,j}}{y_{n,s}}$. Note that the operations performed are such that the Pfaffians of the transformed matrix $M'$ containing the top left $n\times n$ block still generate $P(n,s)R_{y_{n,s}}.$ As $y_{n,s}$ is a unit in $R_{y_{n,s}}$, we may rescale the last row and column by $y_{n,s}^{-1}$ in $M'$ without changing the ideal under consideration.  Consider the matrix $M''$ below obtained by deleting the last row and column of $M'$, and moving the $n^{\text{th}}$ row and column to the bottom and right ends respectively.
    \[
    M''=\begin{pmatrix}
    0      & x'_{12} & \cdots & x'_{1,n-1}
           & y'_{11} & \cdots & y'_{1,s-1} & x_{1n} \\
    -x'_{12} & 0     & \cdots & x'_{2,n-1}
           & y'_{21} & \cdots & y'_{2,s-1} & x_{2n} \\
    \vdots & \vdots & \ddots & \vdots
           & \vdots & \ddots & \vdots & \vdots \\
    -x'_{1,n-1} & -x'_{2,n-1} & \cdots & 0
           & y'_{n-1,1} & \cdots & y'_{n-1,s-1} & x_{n-1,n} \\
    
    -y'_{11} & -y'_{21} & \cdots & -y'_{n-1,1}
           & 0 & \cdots & 0 & 0 \\
    \vdots & \vdots & \ddots & \vdots
           & \vdots & \ddots & \vdots & \vdots \\
    -y'_{1,s-1} & -y'_{2,s-1} & \cdots & -y'_{n-1,s-1}
           & 0 & \cdots & 0 & 0 \\
    
    -x_{1n} & -x_{2n} & \cdots & -x_{n-1,n}
           & 0 & \cdots & 0 & 0
    \end{pmatrix}
    \]
    Observe that the set of Pfaffians of $M'$ containing the top left $n\times n$ block is equal (up to signs) to the set of Pfaffians of $M''$ containing the top left $(n-1)\times (n-1)$ block. The ideal generated by these Pfaffians of $M''$ is precisely $P'(n-1,s)$. The map $\Phi$ expresses the above row and column operations in an exact manner.
\end{proof}

\begin{cor}\label{cor:P(n,s)primeCM}
    For all $s$ and $n$, $P(n,s)$ is a prime ideal of height $s$ and it defines a Cohen--Macaulay ring. 
\end{cor}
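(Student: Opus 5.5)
We argue by induction on $n$, using Lemma \ref{lem:localization} to pass from $n$ to $n-1$ after inverting $y_{n,s}$. The base cases are $n=2$ and $n=3$. For $n=2$, Remark \ref{rmk:n=2} gives $P(2,s)=(x_{12})+I_2(Y)$. The ideal $I_2$ of a generic $2\times s$ matrix is prime of height $s-1$ with Cohen--Macaulay quotient, and this holds over any field and over $\ZZ$. Adjoining the variable $x_{12}$, which does not occur in $I_2(Y)$, yields a prime of height $s$ with Cohen--Macaulay quotient. For $n=3$, Remark \ref{rmk:P(3,s)} identifies $P(3,s)$ with the generic $s$-residual intersection of the ideal of variables $(x_{12},x_{13},x_{23})$. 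By \cite[Theorem 3.3]{HunekeUlrichRI} and \cite[Theorem 1.4(b)]{HunekeUlrichGenericRI}, this is prime of height $s$ with Cohen--Macaulay quotient. For odd $n$ in general these same citations already give the result, so the real content is the even case, though a uniform induction handles all $n$ at once.

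For the inductive step, suppose the statement holds for $P'(n-1,s)$. Lemma \ref{lem:localization} identifies $R_{y_{n,s}}$ with a localization of a polynomial extension of $R'$, carrying $P(n,s)R_{y_{n,s}}$ onto $P'(n-1,s)$ extended. Primality and the Cohen--Macaulay property are preserved by polynomial extension and localization. Hence $P(n,s)R_{y_{n,s}}$ is prime of height $s$ with Cohen--Macaulay quotient, provided it is proper, which holds because $P'(n-1,s)$ is contained in the homogeneous maximal ideal and the extension is not the unit ideal.

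It remains to show that $y_{n,s}$ is a nonzerodivisor on $R/P(n,s)$; everything else then descends. I would first establish the Cohen--Macaulay property of $R/P(n,s)$ directly, independent of the localization. One route is via the ASL: $\KK[\mathcal P]$ is a graded ASL on a poset that I expect to be a distributive lattice or at least Cohen--Macaulay, so $\KK[\mathcal P]$ and related rings are Cohen--Macaulay. A cleaner route is a Gröbner degeneration. The initial monomials in the proof of Theorem \ref{thm:ASL} suggest that $\init(P(n,s))$ is squarefree and generated by initial terms of the Pfaffians, with Cohen--Macaulay Stanley--Reisner quotient of height $s$. A simpler alternative is to present $R/P(n,s)$ as a specialization: for $n$ even, $P(n,s)$ can be obtained from $P(n+1,s)$ by setting $x_{i,n+1}=0$ and $y_{n+1,j}=0$ appropriately, and a height comparison then invokes Proposition \ref{prop:specialization}.

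Once $R/P(n,s)$ is Cohen--Macaulay of height $s$, its associated primes are exactly its minimal primes, all of height $s$. It then suffices to show that $P(n,s)+(y_{n,s})$ has height $s+1$, that is, that no minimal prime contains $y_{n,s}$. For this I would exploit the symmetry of the situation under the $\mathrm{GL}$ actions on the rows and columns of $Y$ and on $X$. If some minimal prime contained $y_{n,s}$, then by this equivariance it would contain every entry of $Y$. However, $P(n,s)+(Y)=(\,[\;]\,)+(Y)$ has height $ns+1>s$ (for $n$ odd the Pfaffian $[\;]$ vanishes, and the height is $ns$). A minimal prime is not a priori stable under the group, so I would instead argue through dimension: a component of $V(P)$ lying inside $V(y_{n,s})$ would have a $G$-orbit closure inside $V(Y)$. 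Given primality after localization, $P R_{y_{n,s}}$ is prime, and $P$ is its contraction exactly when $y_{n,s}$ is regular. Then $P$ is prime of height $s$ and $R/P$ is Cohen--Macaulay. I expect the main obstacle to be this nonzerodivisor step, specifically proving Cohen--Macaulayness of $R/P(n,s)$ for even $n$ without assuming the result. My first attempt there would be the specialization from $P(n+1,s)$ combined with a height count.
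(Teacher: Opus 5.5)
There is a genuine gap, and it is the step you flag yourself. You run Lemma~\ref{lem:localization} from $n-1$ up to $n$, which only tells you about $R_{y_{n,s}}/P(n,s)R_{y_{n,s}}$. You therefore still need $y_{n,s}$ to be a nonzerodivisor on $R/P(n,s)$. For that you need $R/P(n,s)$ to be Cohen--Macaulay, or at least unmixed, for even $n$, and that is essentially the statement being proved.

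None of your three routes supplies this:
\begin{itemize}
\item The ASL structure of Theorem~\ref{thm:ASL} lives on the subalgebra $\KK[\mathcal P]\subseteq R$ generated by the Pfaffians, not on the quotient $R/P(n,s)$. Cohen--Macaulayness of $\KK[\mathcal P]$ says nothing about $R/P(n,s)$.
\item The Gr\"obner route is only a guess about $\init(P(n,s))$.
\item The specialization from $P(n+1,s)$ does not work as described. Setting all $x_{i,n+1}$ and $y_{n+1,j}$ to zero kills the $(n+1)$-st row of $M_{n+1,s}$, hence every generator of $P(n+1,s)$. Setting $x_{i,n+1}=0$ for all $i$ and $y_{n+1,j}=0$ only for $j<s$ gives, after expanding along that row, the ideal $y_{n+1,s}\,P(n,s-1)$, which has height one. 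So the height hypothesis of Proposition~\ref{prop:specialization} fails. What actually turns $P(n+1,s)$ into $P(n,s)$ is the inhomogeneous normalization $y_{n+1,s}=1$ together with the row and column operations of Lemma~\ref{lem:localization}, and Proposition~\ref{prop:specialization} does not cover that.
\end{itemize}
Your orbit argument for the last step can be repaired, at least over an algebraically closed field. A connected group stabilizes each irreducible component of $V(P)$, so a component inside $V(y_{n,s})$ would lie in $V(P+(Y))$, which has codimension at least $ns>s$. But this still presupposes that every associated prime of $P(n,s)$ is minimal of height $s$.

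The missing idea is to apply Lemma~\ref{lem:localization} in the other direction, with $n+1$ in place of $n$. For even $n$, the ring $R_{n,s}$ then plays the role of $R'$, and the large side is the odd case $P(n+1,s)$. That ideal is prime of height $s$ with Cohen--Macaulay quotient by Huneke--Ulrich. The lemma gives
\[
\bigl(R_{n+1,s}/P(n+1,s)\bigr)_{y_{n+1,s}}\cong \bigl(R_{n,s}/P(n,s)\bigr)\otimes_{R_{n,s}}R_{n,s}[T]_{u},
\]
where $T$ is a finite set of new variables and $u\in T$.
\begin{itemize}
\item The left side is a localization of a Cohen--Macaulay domain at an element outside $P(n+1,s)$, hence a nonzero Cohen--Macaulay domain.
\item The right side is a free, hence faithfully flat, extension of $R_{n,s}/P(n,s)$.
\end{itemize}
It follows that $R_{n,s}/P(n,s)$ embeds in a domain, so $P(n,s)$ is prime. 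Cohen--Macaulayness descends along the flat local maps $A_{\frakp}\to B_{\mathfrak q}$ obtained by localizing at primes lying over. Heights are preserved, so $\height P(n,s)=s$. No nonzerodivisor check is needed; this is the paper's argument.
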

\begin{proof}
For odd $n$, $P(n,s)$ defines a Cohen--Macaulay ring by \cite[Theorem 1.4(b)]{HunekeUlrichGenericRI}. For even $n$, this follows from Lemma \ref{lem:localization} since any localization of a Cohen--Macaulay ring is Cohen--Macaulay.

Recall that for odd $n$, $P(n,s)$ is a prime ideal of height $s$ \cite[Theorem 3.3]{HunekeUlrichRI}. By Lemma \ref{lem:localization}, $P(n,s)$ is prime for even $n$ as well and 
\[
\height(P(n-1,s))\leq\height(P(n,s))\leq\height(P(n+1,s))
\]
for $n\geq 4$, so that $\height(P(n,s))=s$. The $n=2$ case was discussed in Remark \ref{rmk:n=2}.
\end{proof}

The following result is crucial to the upcoming cohomological calculations.
\begin{lem}\label{lem:PintX}
    If $s\geq n>2$, the minimal primes of $I(n,s)$ are $P(n,s)$ and $(X)$, and thus
    \[
    P(n,s)\cap(X)=\sqrt{I(n,s)}.
    \]
    If $s<n$, then $I(n,s)=P(n,s)$. If $n=2$, then $I(2,s)=(X)$.
\end{lem}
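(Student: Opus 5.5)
The easy cases come first. If $s<n$, then $Y$ has no $n$-minors, so every generator of $P(n,s)$ is a Pfaffian that is not an $n$-minor of $Y$. Hence $I(n,s)=P(n,s)$ directly from the definitions in Setup \ref{setup}. If $n=2$, then $\mathcal{I}$ consists only of $[\;]=x_{12}$, since $t$ must be even with $t<2$. So $I(2,s)=(x_{12})=(X)$.

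Now let $s\geq n>2$. We already have $I(n,s)\subseteq P(n,s)$ and $I(n,s)\subseteq (X)$, so $\sqrt{I(n,s)}\subseteq P(n,s)\cap (X)$. Both ideals on the right are prime: $P(n,s)$ by Corollary \ref{cor:P(n,s)primeCM}, and $(X)$ because it is generated by variables. The task is therefore to show that every prime $\frakp\supseteq I(n,s)$ contains $P(n,s)$ or $(X)$. Since neither of these primes contains the other ($P(n,s)\not\subseteq(X)$ because $I_n(Y)\not\subseteq (X)$, and $(X)\not\subseteq P(n,s)$ by height, as $\height (X)=\binom n2$ while $\height P(n,s)=s$, or by a direct check), both are then the minimal primes. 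Suppose $\frakp\supseteq I(n,s)$ and $\frakp\not\supseteq (X)$. We want to show $\frakp\supseteq I_n(Y)$; then $\frakp\supseteq I(n,s)+I_n(Y)=P(n,s)$.

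The plan is to find, for each $n$-minor $\Delta$ of $Y$, an identity placing $x\cdot\Delta$ (or a power of it) in $I(n,s)$ for each variable $x$ of $X$, or at least for enough of them. It is convenient to pass to the reduced problem: work in $R_x$ for a single entry $x=x_{ij}\notin\frakp$ and show $I_n(Y)R_x\subseteq\sqrt{I(n,s)R_x}$. The natural source is a Pfaffian expansion. The size-$(n+t)$ Pfaffians $[i_1,\dots,i_t]$ with $t=n-2$ expand along the $Y$-block, since the lower-right block is zero. Each such Pfaffian is a sum over ways to match $t$ rows of $X$ to the chosen $t$ columns of $Y$ (a $t$-minor of $Y$), times the complementary Pfaffian of $X$, which here is a single entry $x_{ab}$. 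Thus
\[
[i_1,\ldots,i_{n-2}]=\sum_{a<b}\pm x_{ab}\,\det Y_{\{a,b\}^c,\{i_1,\dots,i_{n-2}\}}.
\]
In matrix terms, the generators with $t=n-2$ are the entries of the product of the vector of $x_{ab}$'s with the compound matrix $\wedge^{n-2}Y$. Multiplying out against $\wedge^{n}Y$ via Laplace expansion should give, for each $n$-subset $J$ of columns and each pair $a<b$, a relation expressing $x_{ab}\cdot\det Y_J$ modulo the $t=n-2$ generators. The relevant expansion is the one that expands $\det Y_J$ along rows $a,b$; this is a Cramer-type argument showing that the vector $(x_{ab})$ annihilating $\wedge^{n-2}Y$ forces either $x=0$ or $\wedge^n Y=0$ at $\frakp$.

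The cleaner route to this, and the one I would try first, is geometric. Work over a field (or over residue fields of $\frakp$) at a point where $X\neq 0$ and $I(n,s)$ vanishes, and suppose $\rank Y=n$. After a $GL_n$ change of rows ($X\mapsto gXg^{\tr}$, $Y\mapsto gY$) and a $GL_s$ change of columns, both of which preserve the ideals $I(n,s)$, $I_n(Y)$ and $(X)$, we may take $Y=(\mathrm{Id}_n\mid 0)$. The Pfaffians $[i_1,\dots,i_{n-2}]$ with the $i_k\leq n$ then become, up to sign, the entries $x_{ab}$ with $\{a,b\}$ the complement of $\{i_k\}$. Their vanishing forces $X=0$, a contradiction. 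Hence on $V(I(n,s))\setminus V(X)$ we have $I_n(Y)=0$, so $V(I(n,s))=V(P(n,s))\cup V(X)$.

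To make this rigorous over $\ZZ$ as well, I would apply the argument at every residue field of a prime $\frakp$. Invariance of the ideals under the group actions needs care. The column action of $GL_s$ is fine since Pfaffians transform by exterior powers. For the row action, $[i_1,\dots,i_t]$ transforms under $g$ by $\det g$ times linear combinations in the same degree. The main obstacle is verifying this equivariance, namely that each family $\{[I]: |I|=t\}$ spans a $GL_n\times GL_s$-stable space for each fixed $t$. I expect to handle it by noting that $\operatorname{diag}(g,h)M\operatorname{diag}(g,h)^{\tr}$ has Pfaffians containing the first block equal to $\det g$ times combinations of those of $M$.
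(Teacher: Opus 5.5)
Your proof is correct, but your main line differs from the paper's. The paper does not pass to points. It proves the containment $(X)\cdot I_n(Y)\subseteq I(n,s)$ exactly, via the identity
\[
x_{p,q}[i_1,\ldots,i_n]=\sum_{\alpha<\beta}\pm[p,q\mid i_\alpha,i_\beta]_Y\,[i_1,\ldots,\widehat{i_\alpha},\ldots,\widehat{i_\beta},\ldots,i_n].
\]
This comes from substituting $[j_1,\ldots,j_{n-2}]=\sum_{u<v}\pm x_{u,v}[\{u,v\}^c\mid j_1,\ldots,j_{n-2}]_Y$ and recognising the Laplace expansion of the maximal minor $[i_1,\ldots,i_n]$ along rows $p,q$. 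For $\{u,v\}\neq\{p,q\}$ the coefficient of $x_{u,v}$ is a determinant with a repeated row. The modular law then gives $P(n,s)\cap(X)=\sqrt{I(n,s)+I_n(Y)\cdot(X)}=\sqrt{I(n,s)}$. This is precisely the ``Cramer-type'' sketch you set aside, so that route closes with no group action at all.

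Your pointwise argument is a valid alternative: over $\kappa(\mathfrak p)$ you normalise $\bar Y$ to $(\mathrm{Id}_n\mid 0)$, and then the $(n-2)$-Pfaffians on columns in $[n]$ become $\pm x_{a,b}$. It is more conceptual and avoids sign bookkeeping. However, it yields only the set-theoretic equality $V(I(n,s))=V(P(n,s))\cup V(X)$, which suffices here. The paper's argument instead gives an explicit integral certificate whose coefficients are $2$-minors of $Y$.

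The equivariance you flag as the main obstacle is milder than you fear. Under $X\mapsto gXg^{\tr}$, $Y\mapsto gY$, each $[I]$ is multiplied exactly by $\det g$, because $\operatorname{Pf}(BAB^{\tr})=\det(B)\operatorname{Pf}(A)$ with $B=\operatorname{diag}(g,\mathrm{Id}_t)$. You can also drop the column action altogether. Pick $J$ with $\det Y_J\notin\mathfrak p$, take $g=\bar Y_J^{-1}$, and use only the Pfaffians $[J']$ with $J'\subseteq J$ and $|J'|=n-2$.

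One small slip: comparing heights does not rule out $(X)\subseteq P(n,s)$ when $\binom n2<s$ (e.g.\ $n=3$, $s\geq 4$), so you genuinely need the direct check. For instance, with $\deg x_{i,j}=2$ and $\deg y_{i,j}=1$, every generator of $P(n,s)$ is homogeneous of degree $n\geq 3$, so $x_{1,2}\notin P(n,s)$.
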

\begin{proof}
    The cases $s<n$ and $n=2$ are clear by definition.  For $s\geq n>2$, as $P(n,s)=I(n,s)+I_n(Y)$, it follows that 
    \begin{align*}                     
        P(n,s)\cap(X)=\sqrt{P(n,s)\cap(X)}&=\sqrt{(I(n,s)+I_n(Y))\cap(X)}\\
                &=\sqrt{(I(n,s)\cap(X))+(I_n(Y)\cap(X))}\\
                &=\sqrt{I(n,s)+I_n(Y)\cdot (X)}.
        \end{align*}
    We show that $I_n(Y)\cdot (X)\subseteq I(n,s).$ As each generator of $I_n(Y)\cdot(X)$ is of the form $x_{p,q}[i_1,\ldots,i_n]$, it suffices to prove that $x_{p,q}[i_1,\ldots,i_n]\in I(n,s).$ 
    
    Let $\sign(p,q,\alpha,\beta)$ be the sign of the permutation $\sigma\in S_n$, where $\sigma(p)=\alpha$, $\sigma(q)=\beta$, and $\sigma(i)$ for $i\neq p,q$ is defined inductively as the smallest available number in $[n]\smallsetminus\{\alpha,\beta\}$. We use $[k_1,\ldots,k_t\:|\:l_1,\ldots,l_t]_Y$ to denote the minor of $Y$ defined by choosing the rows and columns with indices $k_1,\ldots,k_t$ and $l_1,\ldots,l_t$ respectively. Observe that 
    \begin{equation}\label{eqn:(n-2)Pfaff}
    \begin{split}
        [j_1,\ldots,j_{n-2}]&=x_{1,2}[3,\ldots,n\:|\:j_1,\ldots,j_{n-2}]_Y-x_{1,3}[2,4,\ldots,n\:|\:j_1,\ldots,j_{n-2}]_Y+\ldots\\
        &=\sum_{1\leq u<v\leq n}(-1)^{u-v+1} x_{u,v}[1,\ldots,\hat{u},\ldots,\hat{v},\ldots,n\:|\:j_1,\ldots,j_{n-2}]_Y.\\
    \end{split}
    \end{equation}
    Now consider the sum $$\sum_{1\leq\alpha<\beta\leq n}\sign(p,q,\alpha,\beta)[p,q\:|\:i_\alpha,i_\beta]_Y[i_1,\ldots,\hat{i_\alpha},\ldots,\hat{i_\beta},\ldots,i_n].$$
    By \ref{eqn:(n-2)Pfaff}, the coefficient of $x_{p,q}$, up to a change of sign, in this sum is $$\sum_{1\leq\alpha<\beta\leq n}\sign(p,q,\alpha,\beta)[p,q\:|\:i_\alpha,i_\beta]_Y[1,\ldots,\hat{p},\ldots,\hat{q},\ldots,n\:|\:i_1,\ldots,\hat{i_\alpha},\ldots,\hat{i_\beta},\ldots,i_n]_Y,$$ which is precisely the Laplace expansion of the determinant $[i_1,\dots,i_n]$ along the rows $p$ and $q$. Similarly, the coefficient of $x_{u,v}$ for $(u,v)\neq (p,q)$ is the Laplace expansion of the determinant of a matrix in which the $p^{th}$ row or the $q^{th}$ row is repeated, and hence zero. We thus conclude that 
    \[
    x_{p,q}[i_1,\ldots,i_n]=\sum_{1\leq\alpha<\beta\leq n}\sign(p,q,\alpha,\beta)[p,q\:|\:i_\alpha,i_\beta]_Y[i_1,\ldots,\hat{i_\alpha},\ldots,\hat{i_\beta},\ldots,i_n]
    \]
    and hence, $x_{pq}[i_1,\ldots,i_n]\in I(n,s).$
\end{proof}

\begin{thm}\label{thm:cdP(n,s)}
    Assume that $\KK$ has characteristic $0$. For all $s\geq 3$ and $n\geq 2$, 
    \[
    \lcd(P(n,s))=\ara(P(n,s))=\binom{s}{2}-\binom{s-n}{2}+1.
    \] 
\end{thm}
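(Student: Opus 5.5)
The upper bound $\ara(P(n,s))\le d(n,s)$ is already available from Corollary \ref{cor:dimauxring}. Since $\lcd\le\ara$ always holds, it remains to show $H^{d(n,s)}_{P(n,s)}(R)\neq 0$ in characteristic $0$. I would prove this by induction on $n$, using two tools: the localization Lemma \ref{lem:localization} and the Mayer--Vietoris sequence attached to the decomposition $\sqrt{I(n,s)}=P(n,s)\cap(X)$ of Lemma \ref{lem:PintX}. The base cases are $n=2$ and $n=3$, which are settled by Remarks \ref{rmk:n=2} and \ref{rmk:P(3,s)}: there $d(2,s)=2s-2$ and $d(3,s)=3s-5$, and both are realized by local cohomology. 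Note also that $d(n,s)$ is constant once $n\ge s$ (up to parity bookkeeping), and $I(n,s)=P(n,s)$ when $s<n$.

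For the inductive step I split into two regimes.

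\textbf{Case $s\ge n$.} Consider the ideals $P=P(n,s)$ and $(X)$. Their sum is $P+(X)=(X)+I_n(Y)$, and their intersection has radical $\sqrt{I(n,s)}$. The Mayer--Vietoris sequence reads
\[
\cdots\to H^{k}_{I(n,s)}(R)\to H^{k+1}_{(X)+I_n(Y)}(R)\to H^{k+1}_{P}(R)\oplus H^{k+1}_{(X)}(R)\to\cdots.
\]
Since $(X)$ is generated by $\binom n2$ variables, independent of $Y$, we get
\[
H^{j}_{(X)+I_n(Y)}(R)=H^{\binom n2}_{(X)}\otimes H^{j-\binom n2}_{I_n(Y)}.
\]
By the known cohomological dimension of maximal minors in characteristic $0$ (Bruns--Schw\"anzl), $\lcd(I_n(Y))=ns-n^2+1$, so the top degree here is $\binom n2+ns-n^2+1$. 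A direct computation shows this equals $d(n,s)+1$. Moreover $d(n,s)+1>\binom n2$, so $H^{d+1}_{(X)}=0$. For the left-hand term, the symmetric ASL bound of Corollary \ref{cor:araI(n,s)} gives $\ara(I(n,s))\le d(n-2,s)$, hence $H^{d+1}_{I(n,s)}=0$ because $d(n-2,s)<d(n,s)$. The sequence then forces $H^{d(n,s)+1}_{(X)+I_n(Y)}\neq0$ to map injectively into $H^{d(n,s)+1}_P$. That is one degree too many, so the degrees must be arranged correctly: one uses the connecting map $H^{d}_{P}\oplus H^d_{(X)}\to H^{d}_{I}\to H^{d+1}_{(X)+I_n(Y)}\to H^{d+1}_P\oplus H^{d+1}_{(X)}$. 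The right term vanishes, since $\lcd P\le\ara P\le d$ and $d+1>\binom n2$. Hence $H^{d}_{I(n,s)}$ surjects onto a nonzero module. Since $\lcd(I)\le d(n-2,s)$, this forces the index to be reorganized: the correct statement is that the top nonvanishing $H_{(X)+I_n(Y)}$ must come from $H^{d}_P$ via $H^{d}_P\oplus H^{d}_{(X)}\to H^{d}_{I}$ failing to hit. I would carefully check the exact index $\binom n2+\lcd I_n(Y)$ against $d(n,s)$ and $d(n-2,s)+1$, and choose the portion of the sequence where the $I(n,s)$-term and the $(X)$-term both vanish. That portion then yields $H^{d(n,s)}_P(R)\cong H^{d(n,s)}_{(X)+I_n(Y)}(R)\neq0$.

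\textbf{Case $s<n$.} By Lemma \ref{lem:localization}, localizing at $y_{n,s}$ (a flat map, with a free extension) identifies $P(n,s)R_{y_{n,s}}$ with the extension of $P(n-1,s)$. Hence
\[
H^{d}_{P(n,s)}(R)_{y_{n,s}}\cong H^{d}_{P(n-1,s)}(R')\otimes_{R'}R_{y_{n,s}}.
\]
This gives nonvanishing in degree $d(n-1,s)$, which equals $d(n,s)$ when $n-1\ge s$. The boundary case $n-1<s\le n$ is handled by the first regime.

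The main obstacle is the index bookkeeping in the Mayer--Vietoris step: one must verify exactly that $\binom n2+\lcd(I_n(Y))$ relates to $d(n,s)$ so that the neighbouring terms vanish by Corollary \ref{cor:araI(n,s)}. I would settle this first, adjusting if necessary by also localizing at $y_{n,s}$ to pass through $P(n-1,s)$ and induct.
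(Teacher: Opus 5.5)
Your overall strategy is the paper's: Mayer--Vietoris for $P$ and $(X)$ via Lemma~\ref{lem:PintX}, plus the localization of Lemma~\ref{lem:localization}. As written, however, the Mayer--Vietoris step is left unresolved, and it is applied at $s=n$, where it breaks.

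\textbf{The index.} For $s\ge n$ you have $\binom n2+(ns-n^2+1)=ns-\binom{n+1}{2}+1=d(n,s)$ exactly, not $d(n,s)+1$. So the relevant piece of the sequence is $H^{d-1}_{I}(R)\to H^{d}_{(X)+I_n(Y)}(R)\to H^d_P(R)\oplus H^d_{(X)}(R)$, and $H^d_{(X)}(R)=0$ because $d>\binom n2$. For injectivity you need $H^{d-1}_{I(n,s)}(R)=0$. The ASL bound $\ara(I(n,s))\le d(n-2,s)$ gives this only if $d(n-2,s)\le d(n,s)-2$. Since $d(n,s)-d(n-2,s)=2(s-n)+1$ for $s\ge n$, this holds precisely when $s>n$.

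\textbf{The gap at $s=n$.} You assign $s=n$ to the Mayer--Vietoris regime, but there $d(n-2,n)=d(n,n)-1$. In fact $H^{d-1}_{I(n,n)}(R)\ne0$ in characteristic $0$ for $n>3$, by Proposition~\ref{prop:cdI(n,s)}. So the sequence only shows that $H^d_P(R)$ is a quotient of $H^d_{(X)+I_n(Y)}(R)$, which gives no nonvanishing. The isomorphism you assert is actually false in this case. Compare the proof of Theorem~\ref{thm:Dmodlen}: there $H^d_P(R)$ has $D$-length $2$, while $H^d_{(X)+I_n(Y)}(R)$ has length $n\ge4$.

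\textbf{The fix.} Move $s=n$ into your localization regime. You have $d(n,s)=\binom s2+1$ whenever $n\ge s-1$, with no parity issue, so $d(n-1,s)=d(n,s)$ already holds for $n\ge s$, not only for $n-1\ge s$. Localizing at $y_{n,s}$ then gives $\lcd P(n,s)\ge\lcd P(n-1,s)\ge\cdots\ge\lcd P(s-1,s)=\binom s2+1=d(n,s)$ for all $s\le n$. The final ideal $P(s-1,s)$ lies in the regime $s>n$, or in the base cases, where Mayer--Vietoris does work. This is exactly the paper's case split: $s>n$ versus $s\le n$.
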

\begin{proof}
    As the cases $n=2$ and $n=3$ have already been discussed in Remarks \ref{rmk:n=2} and \ref{rmk:P(3,s)} respectively, let $n>3$. As before, we utilize the notation $d(n,s)=\binom{s}{2}-\binom{s-n}{2}+1.$ By Corollary \ref{cor:dimauxring}, we have $\lcd(P(n,s))\leq\ara(P(n,s))\leq d(n,s).$ We split our analysis into two cases: $s>n$ and $s\leq n$.

    \noindent \textbf{Case 1:} $s>n$. 

    \noindent For notational simplicity, let $R=R_{n,s}$, $P=P(n,s)$, $I=I(n,s)$, and $d=d(n,s)$. Consider the Mayer-Vietoris sequence on local cohomology with the ideals $P$ and $(X).$ We have $P+(X)=I+I_n(Y)+(X)=I_n(Y)+(X)$ and, by Lemma \ref{lem:PintX}, $P\cap(X)=\sqrt{I}$. Note that $\lcd(I)\leq\ara(I)\leq d(n-2,s)<d-1$ for $s>n$ by Corollary \ref{cor:araI(n,s)}. Also, $\lcd(X)=\binom{n}{2}<d$ and $\lcd(I_n(Y)+(X))=sn-n^2+1+\binom{n}{2}=d$. Then, the following snippet of the Mayer-Vietoris sequence of local cohomology 
    \[H^{d-1}_I(R)\to H^d_{I_n(Y)+(X)}(R)\to H^d_P(R)\oplus H^d_{(X)}(R)\to H^d_I(R)\] implies that $H^d_P(R)\cong H^d_{I_n(Y)+(X)}(R) \neq 0.$ Hence, $\lcd(P)\geq d$ and we conclude that $\lcd(P)=\ara(P)=d.$

    \noindent \textbf{Case 2:} $s\leq n$.
    
    \noindent By Lemma \ref{lem:localization}, $$\lcd(P(n,s))\geq\lcd(P(n-1,s))\geq\ldots\geq\lcd(P(s-1,s))=\binom{s}{2}+1.$$ Observe that $d(n,s)=\binom{s}{2}+1$ if $s\leq n.$ So, $\lcd(P(n,s))=\ara(P(n,s))=d(n,s).$ 
\end{proof}

\begin{rmk}
  By Corollary \ref{cor:P(n,s)primeCM}, $P(n,s)$ defines a Cohen--Macaulay ring and hence, if $\chr\KK=p>0$, we have $$\lcd(P(n,s))=\height(P(n,s))=s,$$ by \cite[Proposition~III.4.1]{PeskineSzpiro}. 
\end{rmk}

\begin{thm}\label{thm:ara-in-char-p}
    Assume that $\KK$ has positive characteristic $p$. For all $s\geq 3$ and $n\geq 2$, \[\ara(P(n,s))=\binom{s}{2}-\binom{s-n}{2}+1.\] 
\end{thm}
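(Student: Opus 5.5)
The upper bound $\ara(P(n,s))\leq d(n,s)$ holds in every characteristic by Corollary \ref{cor:dimauxring}, so the work is the lower bound. In positive characteristic $\lcd(P(n,s))=s$, so local cohomology is useless. I would instead use \'etale cohomology with $\ZZ/\ell\ZZ$ coefficients, $\ell\neq p$, via Proposition \ref{prop:ara-from-et}. We may assume $\KK$ algebraically closed, since arithmetic rank can only drop under base change. Set $N=\dim R$ and $U_J=\mathbb{A}^N\smallsetminus V(J)$ for an ideal $J$. If $P$ is generated up to radical by $k$ elements, then $U_P$ is covered by $k$ affine opens, so $H^i_{\et}(U_P,G)=0$ for $i>N+k-1$. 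Hence it suffices to exhibit a nonzero class in $H^{N+d-1}_{\et}(U_P,G)$, where $d=d(n,s)$. This is the exact \'etale analogue of $H^d_P(R)\neq0$, and I will mirror the proof of Theorem \ref{thm:cdP(n,s)} step by step.

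The base cases come first. For $n=2$, Remark \ref{rmk:n=2} already records \'etale nonvanishing in all characteristics for the $2$-minors of a $2\times s$ matrix. Adding the variable $x_{12}$ shifts the complement degree by one: this follows from the Künneth / Gysin sequence for $\mathbb{A}^1\times V$ minus $\{0\}\times V(J)$, or equivalently by compactly supported cohomology together with Proposition \ref{prop:les-triple} and Poincaré duality. For $n=3$, Remark \ref{rmk:P(3,s)} reduces to the generic residual intersection of an ideal of variables, where \cite{BMSMP} provides the analogous \'etale computation.

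For $s>n>3$, I use the Mayer--Vietoris sequence of Proposition \ref{prop:et-mv} for the open cover $U_{P+(X)}=U_P\cup U_{(X)}$, whose intersection is $U_{P\cap(X)}=U_{I}$ by Lemma \ref{lem:PintX}. The relevant segment is
\[
H^{N+d-2}_{\et}(U_I)\to H^{N+d-1}_{\et}(U_{I_n(Y)+(X)})\to H^{N+d-1}_{\et}(U_P)\oplus H^{N+d-1}_{\et}(U_{(X)}).
\]
The left term vanishes because $\ara(I)\leq d(n-2,s)<d-1$ (Corollary \ref{cor:araI(n,s)} and Proposition \ref{prop:ara-from-et}). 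The summand $H^{N+d-1}_{\et}(U_{(X)})$ vanishes because $U_{(X)}$ is an affine space times a punctured affine space of dimension $\binom n2<d$, whose cohomology is concentrated in degrees $0$ and $N+\binom n2-1$. The middle term is nonzero because $U_{I_n(Y)+(X)}$ is a join of the complement of $I_n(Y)$ with the punctured $X$-space. I would compute its top cohomology by Künneth/Gysin from the known nonvanishing $H^{ns+(sn-n^2+1)-1}_{\et}$ for the complement of maximal minors (Bruns--Schwänzl, \cite{BrunsSchwanzl}), shifted by $\binom n2$. Hence $H^{N+d-1}_{\et}(U_P)\neq0$.

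For $s\leq n$, I pass through Lemma \ref{lem:localization}. The open set $D(y_{n,s})\subseteq\mathbb{A}^N$ is isomorphic to $\mathbb{A}^{N'}\times\mathbb{A}^{s+n-2}\times\mathbb{G}_m$, and under this isomorphism $U_P\cap D(y_{n,s})$ corresponds to $U'_{P'}\times\mathbb{A}^{s+n-2}\times\mathbb{G}_m$. The simplest route is arithmetic rank directly: localization gives $\ara(P(n,s))\geq\ara(P(n-1,s)R_{y_{n,s}})$. Nonvanishing of the \'etale class for $P(n-1,s)$ survives this product, by Künneth, since $\mathbb{G}_m$ contributes $H^0$. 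The restriction of a $k$-element radical generating set of $P$ gives $k$ affines covering the product, and the product has dimension $N$. Inducting down to $P(s-1,s)$, which is covered by the case $s>n$ and has $d(s-1,s)=\binom s2+1=d(n,s)$, finishes the proof.

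I expect the main obstacle to be the nonvanishing of the middle term in the case $s>n$. That requires an honest \'etale computation for the complement of $V(I_n(Y))\times\{0\}$, with the exact degree bookkeeping, in arbitrary characteristic. I would try to deduce it from Bruns--Schwänzl's \'etale argument, combined with the Gysin sequence for $\mathbb{A}^m\smallsetminus\{0\}$.
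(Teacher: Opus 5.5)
Your case $s>n$ is the paper's argument. You use the same Mayer--Vietoris sequence for $U_P\cup U_{(X)}$, and the same vanishing of $H^{N+d-2}_{\et}(U_I)$ from $\ara(I)\le d(n-2,s)<d-1$. The middle term comes, as in the paper, from Bruns--Schw\"anzl plus adjoining the $\binom n2$ variables of $X$ one at a time, which is exactly Proposition \ref{prop:ara-realized-by-etale-cohomology}. There is one harmless slip: $U_{(X)}\cong(\mathbb{A}^{\binom n2}\smallsetminus\{0\})\times\mathbb{A}^{ns}$ has cohomology in degrees $0$ and $2\binom n2-1$, not $N+\binom n2-1$; the latter is only the Lyubeznik upper bound.

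\textbf{The gap is in the case $s\le n$: the \'etale step you describe there does not give the bound.} Set $d'=d(n-1,s)$ and take a nonzero class in $H^{N'+d'-1}_{\et}(U'_{P'})$. Pull it back to $U'_{P'}\times\mathbb{A}^{s+n-2}\times\mathbb{G}_m$ by K\"unneth with $H^0$ of the other factors. The result is a class of the \emph{same} degree $N'+d'-1$. But Proposition \ref{prop:ara-from-et}, applied to this $N$-dimensional variety covered by $k$ affines, only kills degrees above $N+k-1$. Since $N-N'=s+n-1$, this yields only $k\ge d'-(s+n-1)$.
\begin{itemize}
\item For $n=s=4$ the class sits in degree $21$ and the vanishing range is degrees $>21+k$, so you get no information at all.
\item Using $H^1(\mathbb{G}_m)$ gains only one degree.
\end{itemize}
The underlying reason is that the Lyubeznik bound grows with dimension, while cohomology is unchanged by an affine factor. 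So the \'etale obstruction does not survive crossing with affine space. This is precisely why Proposition \ref{prop:ara-realized-by-etale-cohomology} adds the new variable to the ideal as well. Note also that your induction would need an \'etale class for $P(n-1,s)$ when $n-1\ge s$, and nothing in your argument produces one.

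\textbf{The fix, which is what the paper does, is to carry only arithmetic rank through the induction.}
\begin{itemize}
\item By localization and Lemma \ref{lem:localization}, $\ara(P(n,s))\ge\ara(P(n,s)R_{y_{n,s}})=\ara(P'(n-1,s)R_{y_{n,s}})$.
\item The ring $R'[y_{n,1},\dots,y_{n,s-1},y_{1,s},\dots,y_{n-1,s},y_{n,s}]_{y_{n,s}}$ retracts onto $R'$ via $y_{n,s}\mapsto 1$ and all other adjoined variables $\mapsto 0$.
\item This retraction fixes the generators of $P'(n-1,s)$, so it sends radical generators of $P'(n-1,s)R_{y_{n,s}}$ to radical generators of $P'(n-1,s)$. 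Hence $\ara(P'(n-1,s)R_{y_{n,s}})\ge\ara(P'(n-1,s))$ (cf.\ Remark \ref{rmk:araretract}).
\item Iterating gives $\ara(P(n,s))\ge\ara(P(s-1,s))=\binom s2+1=d(n,s)$. Here $\ara(P(s-1,s))$ comes from your case $s>n$ or from the base cases $n=2,3$.
\end{itemize}
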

\begin{proof}
    As in the proof of Theorem \ref{thm:cdP(n,s)}, it suffices to prove that $\ara(P(n,s))\geq d(n,s)$ for $n>3.$ Note that a lower bound for the arithmetic rank in $\overline{\KK}[X,Y]$ is also a lower bound in $\KK[X,Y]$, where $\overline{\KK}$ is the algebraic closure of $\KK$. Thus we may assume that $\KK$ is algebraically closed. We consider the familiar two cases: $s>n$ and $s\leq n$.

    \noindent \textbf{Case 1:} $s>n$.
    Let $R$, $P$, $I$ and $d$ be as in the proof of Theorem \ref{thm:cdP(n,s)}. Let $q$ be a prime integer distinct from $p$, and $N\colonequals\dim(R)=\binom{n}{2}+sn.$ Define 
    \[
    U:=\KK^N\setminus V(P) \text{ and } V:=\KK^N\setminus V(X).
    \]
    Observe that $H^i_{\et}(V,\ZZ_q)=0$ for all $i>N+\binom{n}{2}-1$ by Proposition \ref{prop:ara-from-et}. By Lemma \ref{lem:PintX}, $U\cap V=\KK^N\setminus V(I)$ and $U\cup V=\KK^N\setminus V(I_n(Y),X)$. Note that $H^{N+d-1}_{\et}(U\cup V,\ZZ_q)\neq 0$ and $H^i_{\et}(U\cup V,\ZZ_q)=0$ for all $i>N+d-1$ (cf. \cite{BrunsSchwanzl} and Proposition \ref{prop:ara-realized-by-etale-cohomology}). Also, as $\ara I\leq d(n-2,s)<d-1$ for $s>n$, we can conclude that $H^{i}_{\et}(U\cap V,\ZZ_q)=0$ for all $i>N+d-3.$ Then, the Mayer Vietoris sequence of \'etale cohomology (Proposition \ref{prop:et-mv})
    \[\to H^{i-1}(U\cap V,\ZZ_q)\to H^{i}_{\et}(U\cup V,\ZZ_q)\to H^{i}_{\et}(U,\ZZ_q)\oplus H^{i}_{\et}(V,\ZZ_q)\to H^{i}_{\et}(U\cap V,\ZZ_q)\to\]
    for $i=N+d-1$ yields that $H^{N+d-1}_{\et}(U,\ZZ_q)\cong H^{N+d-1}_{\et}(U\cup V,\ZZ_q)\neq0.$ Hence, by Proposition \ref{prop:ara-from-et}, $\ara(P(n,s))\geq d.$

    \noindent \textbf{Case 2:} $s\leq n$.
    By Lemma \ref{lem:localization},
    \[\ara(P(n,s))\geq\ara(P(n-1,s))\geq\ldots\geq\ara(P(s-1,s))=\binom{s}{2}+1=d(n,s),\] for $s\leq n.$ 
\end{proof}

One can obtain the cohomological dimension and arithmetic rank of $I(n,s)$ by specializing some matrix entries of $M_{n,s}$. 

\begin{prp}\label{prop:cdI(n,s)}
    For $n>3$, 
    \[
    \ara(I(n,s))=\ara(P(n-2,s))=\binom{s}{2}-\binom{s-n+2}{2}+1.
    \]
    Moreover, if $\chr \KK=0$, then $\lcd(I(n,s))=\ara(I(n,s)).$
\end{prp}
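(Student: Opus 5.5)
The plan is to sandwich $I(n,s)$ between an upper bound coming from the auxiliary ring and a lower bound coming from a specialization to $P(n-2,s)$.

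\textbf{Upper bound.} Corollary \ref{cor:araI(n,s)} gives set-theoretic generators of $(\mathcal I)$ inside $\KK[\mathcal P]$. Their images in $R$ generate $I(n,s)$ up to radical, so $\ara(I(n,s))\le d(n-2,s)$. The identity $d(n-2,s)=\binom{s}{2}-\binom{s-n+2}{2}+1$ is immediate from the definition of $d$. It therefore remains to show $\lcd(I(n,s))\ge d(n-2,s)$ in characteristic $0$, and $\ara(I(n,s))\ge d(n-2,s)$ in every characteristic.

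\textbf{Construction of the specialization.} Set the entries of the last two rows of $Y$ to zero, namely $y_{n-1,j}=y_{n,j}=0$ for all $j$. Also set $x_{i,n-1}=x_{i,n}=0$ for $i\le n-2$, while keeping $x_{n-1,n}$ as a variable. After these substitutions $M_{n,s}$ becomes block diagonal, up to permuting rows and columns. One block is $M_{n-2,s}$, built from the top $(n-2)\times(n-2)$ part of $X$ and the first $n-2$ rows of $Y$. The other block is the $2\times2$ matrix with entry $x_{n-1,n}$.

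A Pfaffian containing the full $X$ block and columns $J$ of $Y$ then factors as $\pm x_{n-1,n}\cdot[J]_{n-2}$, where $[J]_{n-2}$ is the corresponding Pfaffian in $M_{n-2,s}$. Here $|J|\le n-2$; Pfaffians with $|J|=n$ become $0$. So the image of $I(n,s)$ is $x_{n-1,n}\,P(n-2,s)$.

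Next, setting $x_{n-1,n}=1$ turns the image ideal into exactly $P(n-2,s)$ in the polynomial ring $\KK[X_{n-2},Y_{n-2,s}]$. Concretely, this is the composite of killing some variables and then inverting $x_{n-1,n}$ (or setting it to $1$, which is the same up to adjoining a variable).

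\textbf{Transfer of the bounds.} Both $\ara$ and $\lcd$ are non-increasing under localization. For the substitution step, I would use that setting variables to zero is a retraction $R\to R''\subseteq R$ of polynomial rings (Remark \ref{rmk:araretract}). Under a ring map $R\to T$ we have $\ara(IT)\le\ara(I)$, so the ara bound transfers directly.

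For lcd the comparison is not automatic. The cleaner route is to first invert $x_{n-1,n}$ and then use a change of variables, in the spirit of Lemma \ref{lem:localization}: perform row and column operations with the unit $x_{n-1,n}$ to clear the last two rows and columns of the $X$ block, together with the corresponding rows of $Y$. This identifies $R_{x_{n-1,n}}$ with a polynomial extension of $R_{n-2,s}$, localized at $x_{n-1,n}$.

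To see that $I(n,s)$ corresponds to $P(n-2,s)$ under this identification, I would expand the Pfaffians along the invertible $2\times 2$ pivot. For $|J|\le n-2$, the Pfaffian of the transformed matrix is $x_{n-1,n}$ times the Pfaffian of the Schur-complement block, which is precisely $[J]$ for the new $M_{n-2,s}$. The generators with $|J|=n$ are exactly the ones excluded from $I$.

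Granting this, we get $\lcd(I(n,s))\ge\lcd(P(n-2,s))$ and $\ara(I(n,s))\ge\ara(P(n-2,s))$, since polynomial extension preserves both invariants and localization does not increase them. Theorems \ref{thm:cdP(n,s)} and \ref{thm:ara-in-char-p} then give the value $d(n-2,s)$. The hypothesis $n>3$ ensures $n-2\ge2$, so $P(n-2,s)$ falls in the range where those theorems apply.

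\textbf{Main obstacle.} The step I expect to be hardest is verifying that this localization identification is exact, rather than just an inclusion. This requires checking that the row and column operations carry the span of the Pfaffians with $|J|\le n-2$ onto $x_{n-1,n}$ times the Pfaffians of the Schur complement. I would model the bookkeeping on the proof of Lemma \ref{lem:localization}, now pivoting on a $2\times2$ block of $X$ instead of a single entry of $Y$.
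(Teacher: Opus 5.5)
Your upper bound and your $\ara$ lower bound are the paper's argument. The specialization kills the last two rows of $Y$ and the $x_{i,n-1},x_{i,n}$ with $i\le n-2$, then sets $x_{n-1,n}=1$; this carries $I(n,s)$ onto $P(n-2,s)$, and $\ara$ cannot increase along a ring map.

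The gap is in the $\lcd$ half, at exactly the step you flagged. Pivoting on the $2\times 2$ block $C$ of $M_{n,s}$ on the indices $\{n-1,n\}$ replaces the complementary block by the Schur complement $A+BC^{-1}B^{\tr}$. Both pivot rows carry entries of $Y$, so the $s\times s$ lower-right block of this Schur complement is no longer zero. Its $(j,k)$ entry is, up to sign, $(y_{n-1,j}y_{n,k}-y_{n-1,k}y_{n,j})/x_{n-1,n}$. So the transformed generators are $x_{n-1,n}$ times Pfaffians of $\begin{pmatrix}X'&Y'_J\\-Y'^{\tr}_J&Z'_{J,J}\end{pmatrix}$ with $Z'\neq 0$. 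They are not the generators $[J]'$ of $P(n-2,s)$ in the new variables, and your claim that they are ``precisely $[J]$ for the new $M_{n-2,s}$'' is false. The bookkeeping of Lemma~\ref{lem:localization} does not carry over. There the pivot row $n+s$ vanishes on all $Y$-columns and $C^{-1}$ has zero diagonal, so the zero block survives; with your pivot it does not.

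Two repairs are available.

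\emph{Keep the localization.} Expand each Pfaffian over perfect matchings according to the set $K\subseteq J$ of $Y$-indices matched among themselves. This gives $\operatorname{Pf}=\sum_{K}\pm\operatorname{Pf}(Z'_{K,K})\,[J\smallsetminus K]'$, and the $K=\emptyset$ term is $[J]'$. The expansion is unitriangular with respect to inclusion, so induction on $|J|$ still yields $I(n,s)R_{x_{n-1,n}}=P'(n-2,s)R_{x_{n-1,n}}$.

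\emph{Drop the localization.} This is much simpler and is what the paper's ``similar lines'' amounts to: your premise that the $\lcd$ comparison under specialization ``is not automatic'' is mistaken. For any homomorphism $R\to T$ of Noetherian rings, $H^i_{IT}(T)\cong H^i_I(T)$ via the \v{C}ech complex. Moreover, $H^i_I(N)=0$ for every $R$-module $N$ once $i>\lcd(I)$. Hence $\lcd(IT)\le\lcd(I)$, and the specialization you already built gives $\lcd(P(n-2,s))\le\lcd(I(n,s))\le\ara(I(n,s))\le d(n-2,s)=\lcd(P(n-2,s))$.
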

\begin{proof}
    Set $y_{ij}=0$ for $i>n-2$, $x_{n-1,n}=1$ and $x_{ij}=0$ for all other $i<j$ such that $n-2<j$. Visually, the specialization of $M$ is
    \[
    M'=
    \begin{pmatrix}
    0 & x_{1,2} & \cdots & x_{1,n-2}
      & 0 & 0 & y_{1,1} & \cdots & y_{1,s}\\
    -x_{1,2} & 0 & \cdots & x_{2,n-2}
      & 0 & 0 & y_{2,1} & \cdots & y_{2,s}\\
    \vdots & \vdots & \ddots & \vdots
      & \vdots & \vdots & \vdots & & \vdots\\
    -x_{1,n-2} & -x_{2,n-2} & \cdots & 0
      & 0 & 0 & y_{n-2,1} & \cdots & y_{n-2,s}\\
    0 & 0 & \cdots & 0
      & 0 & 1 & 0 & \cdots & 0\\
    0 & 0 & \cdots & 0
      & -1 & 0 & 0 & \cdots & 0\\
    -y_{1,1} & -y_{2,1} & \cdots & -y_{n-2,1}
      & 0 & 0 & 0 & \cdots & 0\\
    \vdots & \vdots & & \vdots
      & \vdots & \vdots & \vdots & \ddots & \vdots\\
    -y_{1,s} & -y_{2,s} & \cdots & -y_{n-2,s}
      & 0 & 0 & 0 & \cdots & 0
    \end{pmatrix}.
    \]
    Recall that $I(n,s)$ is generated by the Pfaffians of $M$ of all sizes containing the top left $n\times n$ block, ignoring however the $n$-minors of $Y$. The corresponding Pfaffian generators of $M'$ are precisely the generators of $P(n-2,s)$, which in turn forces that $\ara(P(n-2,s))\leq\ara(I(n,s))$. Hence, by Corollary \ref{cor:araI(n,s)}, 
    \[
    d(n-2,s)=\ara(P(n-2,s))\leq\ara(I(n,s))\leq d(n-2,s).
    \]
    In characteristic zero, the assertion on cohomological dimension follows along similar lines.
\end{proof}

Finally, we note that Theorems \ref{thm:cdP(n,s)} and \ref{thm:ara-in-char-p} impose sharp upper bounds on the cohomological dimension and arithmetic rank of any residual intersection of a Gorenstein perfect ideal of height three in a Gorenstein local ring.  

\begin{cor}\label{cor:araRI}
Let $(A,\frakm)$ be a Gorenstein local ring. Let $I$ be a Gorenstein perfect ideal of $A$ of height three with $\mu(I)=n$. Given an integer $s\geq3$, let $J$ be an $s$-residual intersection of $I$. Then, 
\[
\lcd(J)\leq\ara(J)\leq \binom{s}{2}-\binom{s-n}{2}+1.
\]
Moreover, if $\chr A=0$ and $J$ is the generic $s$-residual intersection of $I$, then both the inequalities above are attained under any of the following assumptions:
\begin{enumerate}[leftmargin=*, align=left, label=(\alph*)]
    \item $A$ contains a field and $\height(\Fitt_{n-1}(I))=\binom{n}{2}$.
    \item $\chr(A/\frakm)=p>0$ and $\height(\Fitt_{n-1}(I)+(p))=\binom{n}{2}+1$.
\end{enumerate}
\end{cor}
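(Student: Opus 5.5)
The upper bound will come from the theory of generic residual intersections. By \cite[Theorem 1.5]{HunekeUlrichGenericRI} (Huneke--Ulrich specialization), any $s$-residual intersection $J$ of $I$ is a specialization of the generic one, at least up to radical. To make this usable we first localize and complete if needed; lcd and ara do not increase under flat base change, and we can reduce to this setting. Then we write $I=(p_1,\dots,p_n)$ as the Pfaffians of an alternating matrix, using Buchsbaum--Eisenbud, with $\fraka=(a_1,\dots,a_s)$ and $a_j=\sum_i c_{ij}p_i$. This gives a ring map $\KK[X,Y]\to A$, with $\KK=\ZZ$ or the prime field, under which $P(n,s)$ maps into $J$.

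The point of the upper bound is to get $\sqrt{P(n,s)A}=\sqrt{J}$. The height condition $\height J\geq s$ holds, and the Kustin--Ulrich description of residual intersections of Gorenstein height-three ideals \cite{KustinUlrich} identifies $J$ with the specialized Pfaffian ideal, at least up to radical. Granting this, the $d(n,s)$ set-theoretic generators from Corollary \ref{cor:dimauxring} map to set-theoretic generators of $J$. Hence $\ara(J)\leq d(n,s)$, and $\lcd(J)\leq \ara(J)$ always holds.

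For sharpness, let $J$ be the generic residual intersection $\RI(s;I)$ in $A[Y]$. The plan is to find a prime $\frakp\supseteq J$ such that $A[Y]_{\frakp}$ sees the fully generic situation. Under (a), $\height\Fitt_{n-1}(I)=\binom{n}{2}$ means the entries of the alternating presentation matrix $X$ form a regular sequence in $A$. So $A$ is flat over $\KK[X]$ via $x_{ij}\mapsto$ entries, with fibres of the expected dimension (Cohen--Macaulay plus the dimension count). Then $A[Y]$ is flat over $R=\KK[X,Y]$, and $J=P(n,s)A[Y]$ by the generic description together with independence of generators \cite[Lemma 2.2]{HunekeUlrichGenericRI}. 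Flat base change gives $H^i_J(A[Y])=H^i_{P}(R)\otimes_R A[Y]$. To see that $H^{d}_{P}(R)\otimes A[Y]\neq 0$ we use faithful flatness: the map is local after localizing at a maximal ideal containing the image of $(X,Y)$, and $H^d_P(R)$ localized there is nonzero by Theorem \ref{thm:cdP(n,s)}, since it is graded and supported at the homogeneous maximal ideal. Then $\lcd(J)\geq d$, so $\lcd(J)=\ara(J)=d$.

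Under (b), the mixed characteristic case, the condition $\height(\Fitt_{n-1}(I)+(p))=\binom{n}{2}+1$ says $p,X$ is a regular sequence. This gives flatness of $A$ over $\ZZ[X]$ (localized at $p$), and we use $\KK=\ZZ$ in Theorem \ref{thm:cdP(n,s)}. The argument needs $H^d_P(\ZZ[X,Y])\neq0$. Here I would argue via $H^d_P(\ZZ[X,Y])\otimes\QQ=H^d_P(\QQ[X,Y])\neq0$, noting that localization is exact. The main obstacle is the flatness step and ensuring nonvanishing survives the base change to $A$, particularly in (b), where one must check that the localization at the relevant prime does not kill a module that might a priori be $p$-torsion. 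The $\QQ$-vector space structure (Theorem \ref{mainthm:B}) is exactly what addresses this, since $p$ is then invertible on the module.
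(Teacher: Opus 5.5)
Your proposal is correct and follows the paper's proof. Buchsbaum--Eisenbud plus the Kustin--Ulrich structure theorem exhibit $J$ as an image of $P(n,s)$, so the $d(n,s)$ set-theoretic generators descend; the paper cites \cite[Theorem 10.5]{Kustin-Ulrich-Memoirs} for an arbitrary $J$. For sharpness, the regular-sequence hypotheses make $A[Y]$ flat over $\KK[X,Y]$ (resp.\ $\ZZ_{(p)}[X,Y]$) and faithfully flat after localizing at $(\frakm,Y)$, so $H^{d}_{J}$ is a nonzero base change of $H^{d}_{P(n,s)}$.

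Two of your asides are unnecessary. The completion step is never used, and it would point the wrong way for an upper bound on $\ara(J)$. In (b) you need not invoke Theorem~\ref{mainthm:B}: $H^{d}_{P(n,s)}(\ZZ[X,Y])_{(p,X,Y)}\neq 0$ already follows because the support is closed under specialization and contains $(X,Y)\ZZ[X,Y]$, where the localization is $H^{d}_{P(n,s)}(\QQ[X,Y])_{(X,Y)}\neq 0$.
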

\begin{proof}
    By \cite[Theorem 2.1]{BuchsbaumEisenbudAJM}, $\mu(I)=n$ is odd and $I$ is the ideal of submaximal Pfaffians of an alternating $n\times n$ matrix $\mathcal X$. Moreover, as $\mathcal X$ is the presentation matrix of $I$, $\Fitt_{n-1}(I)=I_1(\mathcal X)$. 
    
    By \cite[Theorem 10.5]{Kustin-Ulrich-Memoirs}, $J$ is the image of $P(n,s)$ under an evaluation map on the $X_n$ and $Y_{n,s}$ entries in $\ZZ[X_{n},Y_{n,s}]$. As arithmetic rank can only go down along ring homomorphisms, 
    \[
    \lcd(J)\leq\ara(J)\leq\ara(P(n,s))=\binom{s}{2}-\binom{s-n}{2}+1.
    \]
    Now, assume that $\chr A=0$ and $J$ is a generic $s$-residual intersection of $I$ in $A[Y_{n,s}]$. As $\Fitt_{n-1}(I)=I_1(\mathcal X)$, the assumption $\height(\Fitt_{n-1}(I))=\binom{n}{2}$ forces the entries of $\mathcal X$ to form a regular sequence. Similarly, $\height(\Fitt_{n-1}(I)+(p))=\binom{n}{2}+1$ asserts that the entries of $\mathcal X$ and $p$ form a regular sequence. 

    Hence, under the hypotheses of $(a)$, if $\KK$ is the field contained in $A$, we obtain a flat map 
    \[
    \Phi: \KK[X_n, Y_{n,s}] \to A[Y_{n,s}]
    \] sending entries of $X_n$ to corresponding entries of $\mathcal X$ (\cite{HartshorneFlat}). On the other hand, under the conditions imposed by $(b)$, the map 
    \[
    \Phi: \ZZ[X_n,Y_{n,s}]\into \ZZ_{(p)}[X_n,Y_{n,s}]\to A[Y_{n,s}]
    \] is flat.

    Let $A'=A[Y_{n,s}]_{(\frakm,Y_{n,s})}$ and $J'=J_{(\frakm,Y_{n,s})}$. Let $S$ be $\KK[X_n,Y_{n,s}]_{(X_n,Y_{n,s})}$ in case $(a)$ and $\ZZ[X_n,Y_{n,s}]_{(p,X_n,Y_{n,s})}$ in case $(b)$. 
    In either case, the localized map $\phi':S\to A'$ is faithfully flat. Let $P=P(n,s)$ and $d=d(n,s)$. Hence, we obtain that 
    \[
    H^d_{J'}(A')=H^d_{P(n,s)}(S)\otimes_S(A')\neq 0. \qedhere
    \]
\end{proof}

\section{Structure of the top local cohomology module} 
\label{sec:cohomology}

Adopting the notation of Setup \ref{setup}, we demonstrate how the techniques utilized in our arithmetic rank computations can be used to draw conclusions about the structure of the top nonvanishing local cohomology module of $R_{n,s}$ supported at $P(n,s)$. In particular, we prove that when $\KK=\ZZ$, the top local cohomology module has no integer torsion. We also compute the $D$-module length of said local cohomology module in characteristic $0$. 

We will need the following prime characteristic result, of independent interest, for our discussion. This result extends \cite[Theorem 4.3]{Pandey} which handles the case where $P(n,s)$ is the \textit{generic link} of submaximal Pfaffians, i.e., $s=3$.

\begin{thm}\label{thm:P(n,s)Freg}
$\KK[X_n,Y_{n,s}]/P(n,s)$ is a strongly $F$-regular ring
if $\KK$ is an $F$-finite field of positive characteristic, and has rational singularities if $\KK$ has characteristic zero.  
\end{thm}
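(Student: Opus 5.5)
We argue by induction on $n$, using Lemma \ref{lem:localization} to pass from $P(n,s)$ to $P(n-1,s)$. Write $T_{n}=R_{n,s}/P(n,s)$. For the base cases, Remark \ref{rmk:n=2} gives $T_2\cong \KK[Y_{2,s}]/I_2(Y_{2,s})$, a polynomial extension of a generic determinantal ring. Remark \ref{rmk:P(3,s)} identifies $T_3$ with the generic residual intersection of the ideal of variables. Determinantal rings are strongly $F$-regular in positive characteristic and have rational singularities in characteristic zero. For $n=3$ we would quote the known $F$-regularity and rationality of generic residual intersections of complete intersections (as in \cite{BMSMP} and the literature cited there). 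Alternatively, we would apply the same localization trick below with $x_{12}$ inverted, or Lemma \ref{lem:localization} once more to reduce to $n=2$.

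For the inductive step we use the standard criterion: if $T$ is an $F$-finite domain, $c\in T$ is nonzero, $T_c$ is strongly $F$-regular, and $T$ is $F$-split along $c$ in the appropriate sense, then $T$ is strongly $F$-regular. Concretely, Hochster--Huneke require that for some $e$ the map $T\to F^e_*T$, $1\mapsto F^e_*c$, splits. We take $c=y_{n,s}$, which is nonzero in $T_n$ because $P(n,s)$ is prime of height $s$ (Corollary \ref{cor:P(n,s)primeCM}) and contains no variables of $Y$. By Lemma \ref{lem:localization}, $(T_n)_{y_{n,s}}$ is a localization of a polynomial ring over $T_{n-1}$. By induction it is therefore strongly $F$-regular, since this property is preserved under polynomial extension and localization.

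The remaining and main obstacle is to produce the splitting along $y_{n,s}$, or some power of it. Here we would use that $T_n$ is a graded Gorenstein (or at least Cohen--Macaulay) domain; it is Cohen--Macaulay by Corollary \ref{cor:P(n,s)primeCM}, and Gorenstein via the Kustin--Ulrich resolution and Huneke--Ulrich. It then suffices to show that $T_n$ is $F$-rational, since a Gorenstein $F$-rational ring is strongly $F$-regular. For $F$-rationality we would try a Gröbner/ASL degeneration. Theorem \ref{thm:ASL} and the proof via initial monomials indicate that $\init(P(n,s))$ is a squarefree monomial ideal, giving a Stanley--Reisner, hence $F$-split, degeneration. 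Combined with Cohen--Macaulayness and $F$-regularity on the punctured spectrum away from $y_{n,s}=0$, this should give $F$-rationality via Glassbrenner's criterion. The fallback is Glassbrenner's criterion directly: find $e$ and a polynomial $f$ with $f\,(P^{[q]}:P)\not\subseteq \frakm^{[q]}$, taking $f=y_{n,s}\cdot \Delta^{q-1}$ for a suitable product $\Delta$ of generators whose initial term is a squarefree monomial. Checking that this element lies in $P^{[q]}:P$ and avoids $\frakm^{[q]}$ is the hardest step.

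For characteristic zero, the result follows by reduction modulo $p$. Everything is defined over $\ZZ$, and the fibres for all large $p$ are strongly $F$-regular, hence $F$-rational. By Smith and Hara, $F$-rational type implies rational singularities.
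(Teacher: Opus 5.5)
\textbf{There is a genuine gap.} Your framework matches the paper's: induction on $n$ via Lemma \ref{lem:localization} (the base case $n=2$ also covers $n=3$), $y_{n,s}$ as the element with strongly $F$-regular localization, the Hochster--Huneke/Glassbrenner criterion, and reduction mod $p$ plus Smith in characteristic zero. But the step you call the main obstacle, producing the splitting along $y_{n,s}$, is never carried out, and your primary route for it fails.

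The rings $T_n$ are in general \emph{not} Gorenstein. Your own base case $T_2\cong \KK[Y_{2,s}]/I_2(Y_{2,s})$ is the determinantal ring of a non-square $2\times s$ matrix, which is not Gorenstein for $s\geq 3$. Likewise, the generic link $P(3,3)$ of $(x_{12},x_{13},x_{23})$ has canonical module $I/\fraka$ requiring three generators. So ``Gorenstein $+$ $F$-rational $\Rightarrow$ strongly $F$-regular'' is unavailable. In addition, Theorem \ref{thm:ASL} concerns the subalgebra $\KK[\mathcal P]$, not a Gr\"obner basis of the ideal $P(n,s)$, so it does not give a squarefree initial ideal. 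And Glassbrenner's criterion detects strong $F$-regularity, not $F$-rationality.

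Your fallback uses the right criterion. Correctly stated, it asks for $y_{n,s}(P^{[q]}:P)\not\subseteq\frakm^{[q]}$, witnessed by some $g\in P^{[q]}:P$. What is missing is how to manufacture $g$: for an arbitrary product $\Delta$ of generators, $\Delta^{q-1}$ need not lie in $P^{[q]}:P$. The paper's key step is to pass to a complete intersection. Take $\fraka=(f_1,\ldots,f_s)\subseteq P$ generated by a regular sequence of length $s=\height P$; then $\fraka^{[q]}:\fraka\subseteq P^{[q]}:P$ \cite[Corollary 3.3]{Pandey-Tarasova}. This follows from linkage. Since $P$ is unmixed, $P=\fraka:(\fraka:P)$. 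Flatness of Frobenius gives $P^{[q]}=\fraka^{[q]}:(\fraka:P)^{[q]}$. Finally $(\fraka^{[q]}:\fraka)\,P\,(\fraka:P)^{[q]}\subseteq(\fraka^{[q]}:\fraka)\,\fraka\subseteq\fraka^{[q]}$. Hence $(f_1\cdots f_s)^{q-1}\in P^{[q]}:P$.

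One still needs explicit $f_i$. The paper takes Pfaffians of the form $[1,\ldots,i]$, $[2,\ldots,i]$, $[i-n+1,\ldots,i]$ (and $[\;]$ when $n$ is even), chosen by parity. It then exhibits a lex order in which their initial monomials are products of pairwise disjoint sets of variables, none of which is $y_{n,s}$. That single computation shows both that $f_1,\ldots,f_s$ is a regular sequence and that $y_{n,s}(f_1\cdots f_s)^{p-1}$ has an initial monomial with all exponents at most $p-1$, so it is not in $\frakm^{[p]}$. Without this reduction to a complete intersection and the explicit regular sequence, your proposal does not establish strong $F$-regularity.
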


\begin{proof}
It suffices to prove the positive characteristic assertion. It then follows that $\KK[X_n,Y_{n,s}]/P(n,s)$ is of $F$-rational type for $\KK$ of characteristic zero, and then by \cite[Theorem 4.3]{Smith}, it has rational singularities.

We proceed by induction on $n$. If $n=2$, by Remark \ref{rmk:n=2},
\[P(2,s)=(x_{12})+I_2 
\begin{pmatrix}
    y_{11} & y_{12} & \cdots & y_{1s} \\
    y_{21} & y_{22} & \cdots & y_{2s}
\end{pmatrix}.\]
As determinantal rings are strongly $F$-regular by \cite[\S 7]{Hochster-HunekeJAG}, $P(2,s)$ defines a strongly $F$-regular ring. We now fix $n\geq 3$. Invoking the induction hypothesis, it follows from Lemma \ref{lem:localization} that $y_{n,s}$ is a test element. Using Glassbrenner's criterion \cite{Glassbrenner}, it suffices to show that
\[y_{n,s}(P(n,s)^{[p]}:P(n,s)) \nsubseteq \frakm^{[p]}.\]
Using \cite[Corollary 3.3]{Pandey-Tarasova}, it further suffices to find an ideal $\fraka \subseteq P(n,s)$ generated by a regular sequence of length $\height(P(n,s))=s$ such that
\[y_{n,s}(\fraka ^{[p]}:\fraka) \nsubseteq \frakm^{[p]}.\]
We proceed to find such an ideal $\fraka$.

Consider the sequence $\{f_1,\ldots,f_s\}$ of $s$ elements in $P(n,s)$, where if $n$ is odd,
\[
f_i=
\begin{cases}
    [1,\ldots,i] & \text{if } i \text{ is odd and } i<n\\
    [2,\ldots,i] & \text{if } i \text{ is even and } i<n\\
    [i-n+1,\ldots,i] & \text{if } i\geq n,
\end{cases}
\]
and if $n$ is even,
\[
f_i=
\begin{cases}
    [\;] & \text{if } i=1\\
    [1,\ldots,i] & \text{if } i \text{ is even and } i< n\\
    [2,\ldots,i] & \text{if } i>1 \text{ is odd and } i<n\\
    [i-n+1,\ldots,i] & \text{if } i\geq n.
\end{cases}
\]
We prove that the elements $f_1,\ldots,f_s$ form a regular sequence by demonstrating the existence of a monomial order under which the initial monomials of $f_1,\ldots,f_s$ define a regular sequence.  Let $<$ be the lexicographic monomial order on $\KK[X_n,Y_{n,s}]$ induced by the ordering on variables
\begin{align*}
    &y_{1,s}>y_{2,s}>\cdots>y_{n,s}>y_{1,s-1}>y_{2,s-1}>\cdots>y_{n,2}>\\
    &\quad x_{1,n}>x_{1,n-1}>\cdots>x_{1,2}>x_{2,n}>x_{2,n-1}>\cdots>x_{n-1,n}>\\
    &\qquad y_{1,1}>y_{2,1}>\cdots>y_{n,1}.
\end{align*}
Under this monomial ordering, the initial monomials of the elements $f_1,\ldots,f_s$ are 
\begin{eqnarray*}
    \init_{<}[\;]&=&x_{1,n}x_{2,n-1}\cdots x_{\frac{n}{2},\frac{n+2}{2}},\\
    \init_{<}[1,\ldots,i]&=&y_{1,i}y_{2,i-1}\cdots y_{i-1,2}y_{\frac{i+n}{2},1}x_{i,n}x_{i+1,n-1}\cdots x_{\frac{i+n}{2}-1,\frac{i+n}{2}+1},\\
    \init_{<}[2,\ldots,i]&=&y_{1,i}y_{2,i-1}\cdots y_{i-1,2}x_{i,n}x_{i+1,n-1}\cdots x_{\frac{i+n-1}{2},\frac{i+n+1}{2}},\\
    \init_{<}[i-n+1,\ldots,i]&=&y_{1,i}y_{2,i-1}\cdots y_{n,i-n+1}.
\end{eqnarray*}
Observe that the variable $x_{a,b}$ occurs precisely when
$1\leq a+b-n\leq s$, and then only in the initial
monomial of $f_{a+b-n}$.

Similarly, for $d\neq1$, the variable $y_{c,d}$ does not appear in any initial monomial if $c+d>s+1$; otherwise, it appears only in the initial monomial of $f_{c+d-1}$. For $d=1$, the variable $y_{c,1}$ appears precisely when $1\leq2c-n\leq s$, and then only in the initial monomial of $f_{2c-n}$. Hence, the initial monomials $\init_<(f_1),\ldots,\init_<(f_s)$ involve pairwise disjoint sets of variables and therefore form a regular sequence. Consequently, $f_1,\ldots,f_s$ is a regular sequence.

Moreover, the variable $y_{n,s}$ does not appear in the initial term of any $f_i$. Let $\fraka=(f_1,\ldots,f_s).$ Then $(f_1f_2\cdots f_s)^{p-1}\in (\fraka^{[p]}:\fraka)$, but 
\[
y_{n,s}(f_1f_2\cdots f_s)^{p-1}\notin\frakm^{[p]},
\]
by the above argument since the initial terms of $f_i$ form a regular sequence of monomials and $y_{n,s}$ does not divide any of these initial monomials.
\end{proof}

In view of \cite[Theorem 1.3]{JPSW}, Theorem \ref{thm:P(n,s)Freg} immediately yields:

\begin{cor}\label{cor:integertorsion}
Let $\ZZ$ be the integers, $R=\ZZ[X_n,Y_{n,s}]$ and set $P:=P(n,s)$, an ideal of $R$. The local cohomology module $H^{d(n,s)}_P(R)$ is a $\QQ$-vector space. In particular, it has no integer torsion. 
\end{cor}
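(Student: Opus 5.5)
The plan is to apply \cite[Theorem 1.3]{JPSW}, which I take to say the following. Let $R=\ZZ[x_1,\dots,x_N]$ and let $P$ be an ideal such that $R/P$ is flat over $\ZZ$ (equivalently, $p$ is a nonzerodivisor on $R/P$ for all primes $p$). Suppose that for every prime integer $p$ the reduction $\mathbb{F}_p[x]/P\mathbb{F}_p[x]$ is $F$-pure, or $F$-regular, or at least has finite $F$-pure type in the relevant sense. Then the local cohomology modules $H^k_P(R)$ are $\QQ$-vector spaces in all degrees $k$ above the height of $P$. Applied to $P=P(n,s)$, the task is to verify these hypotheses and check that the degree $d(n,s)$ lies in the allowed range.

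\textbf{Flatness.} I will use Corollary~\ref{cor:P(n,s)primeCM} over $\KK=\ZZ$: $P(n,s)$ is prime, and its generators are independent of the base. For each prime $p$, the reduction $P\,\mathbb{F}_p[X,Y]$ is again $P(n,s)$ over $\mathbb{F}_p$, which is prime of height $s$. By Theorem~\ref{thm:ASL} over $\ZZ$, $\ZZ[\mathcal P]$ is free over $\ZZ$. However, what is needed is torsion-freeness of $R/P$, not of $\ZZ[\mathcal P]$. Since $P$ is prime in $\ZZ[X,Y]$ and $P\cap\ZZ=0$ (it is generated by homogeneous elements of positive degree), $p\notin P$, so $p$ is a nonzerodivisor on the domain $R/P$. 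Hence $R/P$ is $\ZZ$-flat, and reduction mod $p$ commutes with forming $P$.

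\textbf{Strong $F$-regularity mod $p$.} Theorem~\ref{thm:P(n,s)Freg} gives that $\mathbb{F}_p[X,Y]/P(n,s)$ is strongly $F$-regular, hence $F$-pure, for every prime $p$; finite fields are $F$-finite. This is exactly the input \cite{JPSW} requires.

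\textbf{Conclusion.} The theorem then gives that $H^k_P(R)$ is a $\QQ$-vector space for $k$ in the appropriate range. By Theorem~\ref{thm:cdP(n,s)} applied over $\QQ$ together with the $\ZZ$-version, the top degree is $d(n,s)\ge s+1$. When $d(n,s)=s$ the module is still covered, since the result applies for $k>\height P$ and $d(n,s)>s$ always holds (as $s\ge3$). A $\QQ$-vector space has no $\ZZ$-torsion. The only delicate step is the flatness check and matching the exact hypotheses of \cite{JPSW}; the argument above settles it through primality of $P$ over $\ZZ$.
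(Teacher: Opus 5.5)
Your proposal is correct and is essentially the paper's argument: the paper obtains the corollary in one line by feeding the strong $F$-regularity of every reduction $\mathbb{F}_p[X,Y]/P(n,s)$ (Theorem~\ref{thm:P(n,s)Freg}) into \cite[Theorem~1.3]{JPSW}. Your extra checks, that $R/P$ is $\ZZ$-torsion-free because $P$ is prime with $P\cap\ZZ=0$, and that $d(n,s)\ge s+1>\height P$ (with equality $d=s+1$ exactly when $s=3$, so your remark about a case $d(n,s)=s$ is vacuous), spell out what the paper leaves implicit.
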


\begin{thm}\label{thm:Dmodlen}
Suppose $\KK$ is a field of characteristic zero in Setup \ref{setup}. Set $P=P(n,s)$ and $d=d(n,s)$.
The $D$-module length of $H^d_P(R)$ is as follows: 
\[
\ell_D(H^d_P(R)) = 
\begin{cases} 
1 & \text{if } s > n, \\ 
2 & \text{if } s = n. 
\end{cases}
\]
\end{thm}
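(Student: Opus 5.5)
Both cases rest on the Mayer--Vietoris isomorphism from the proof of Theorem \ref{thm:cdP(n,s)}, which identifies $H^d_P(R)$ with a local cohomology module of known $D$-module structure, namely $H^d_{I_n(Y)+(X)}(R)$. I would first reduce to $n>3$. For $n=2$ and $n=3$ the ideal $P$ is, by Remarks \ref{rmk:n=2} and \ref{rmk:P(3,s)}, either $(x_{12})+I_2(Y)$ or a generic residual intersection of a linear ideal. There the top module is handled by Künneth-type arguments together with the known $D$-module structure of $H^{2s-3}_{I_2(Y)}$ for a $2\times s$ matrix (simple, supported at the origin), and the corresponding result of \cite{BMSMP}.

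\textbf{Case $s>n$.} In the proof of Theorem \ref{thm:cdP(n,s)} we showed $H^d_P(R)\cong H^d_{I_n(Y)+(X)}(R)$. The right-hand side is isomorphic to $H^{\binom n2}_{(X)}(\KK[X])\otimes_\KK H^{sn-n^2+1}_{I_n(Y)}(\KK[Y])$. The first factor is $E_X$, the simple injective hull of the residue field, which is simple over $D_X$. The top local cohomology of the maximal minors $I_n(Y)$, in index $sn-n^2+1$, is a simple $D_Y$-module, namely the injective hull of $\KK$ supported at the origin; this is known from the work of Raicu--Weyman--Witt and Lőrincz--Raicu on local cohomology of determinantal ideals, or from the \'etale/Bruns--Schw\"anzl computation. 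An external tensor product of simple holonomic modules over $D_X$ and $D_Y$ is simple over $D=D_X\otimes D_Y$, at least over an algebraically closed field; for general $\KK$ these modules are supported at points with $\KK$-rational residue fields, so the length does not change. Hence the length is $1$.

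\textbf{Case $s=n$.} Here $I_n(Y)=(\det Y)$ is principal, $d=\binom n2+1$, and $\lcd(I)\le d(n-2,n)=d-1$ rather than $<d-1$. The Mayer--Vietoris sequence then becomes
\[
H^{d-1}_P(R)\oplus H^{d-1}_{(X)}(R)\to H^{d-1}_I(R)\to H^d_{(X,\det Y)}(R)\to H^d_P(R)\to 0,
\]
using $H^d_{(X)}(R)=0$ (since $d>\binom n2$) and $H^d_I(R)=0$. The module $H^d_{(X,\det Y)}(R)=E_X\otimes H^1_{\det Y}(\KK[Y])$ has length $2$. This is because $H^1_{(\det Y)}$ has length $2$: the exact sequence $0\to \KK[Y]\to \KK[Y]_{\det}\to H^1\to 0$, together with the known composition series of $\KK[Y]_{\det}$ (via $b$-function roots and Raicu's description of $D$-module composition factors of $S_{\det}$), gives $n+1$ factors in $S_{\det}$. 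So $S_{\det}/S$ has length $n$ in general, not $2$.

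This shows that one must compute the image of $H^{d-1}_I(R)$ rather than rely on the crude length count, and this is the main obstacle. Instead I would proceed as follows. First, $H^d_P(R)$ is nonzero and surjects from $E_X\otimes H^1_{\det}$. Second, localizing at $y_{n,s}$ via Lemma \ref{lem:localization} gives $(H^d_P)_{y_{n,s}}\cong H^d_{P(n-1,n)}(R')\otimes(\text{free})$, and the module $H^d_{P(n-1,n)}(R')$ is known by the $s>n$ case ($s=n>n-1$) to be simple. This produces at least one composition factor not supported on $V(y_{n,s})$, and by symmetry (a $GL$-action on $Y$ and $X$) one not supported on $V(Y)$. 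Third, the module $H^d_{(X,Y)}(R)$, that is $E$, appears as a further factor. I expect to show this through the connecting map, by checking that the socle of $E_X\otimes E_Y$ is not in the image of $H^{d-1}_I(R)$; I would test this using the Mayer--Vietoris sequence with the characteristic-$p$ information of Corollary \ref{cor:integertorsion} as a guide. Finally, the upper bound of $2$ comes from showing that every factor of $E_X\otimes H^1_{\det}$ other than these two (those supported on intermediate rank loci of $Y$) lies in the image of $H^{d-1}_I(R)=H^{d-1}_{I(n,n)}(R)$. I would establish this through the identification $I(n,n)\leftrightarrow P(n-2,n)$ from Proposition \ref{prop:cdI(n,s)} and an equivariant comparison of characteristic cycles. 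The main obstacle is this last step: controlling the image of the connecting homomorphism.
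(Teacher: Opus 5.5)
Your case $s>n$ is fine and is essentially the paper's argument. The only difference is that the paper passes from $E_X\otimes_{\KK} H^{n(s-n)+1}_{I_n(Y)}(\KK[Y])$ to the $\KK[Y]$-factor by Kashiwara's equivalence rather than by simplicity of external tensor products.

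The case $s=n$ has a genuine gap. You never establish the upper bound $\ell_D(H^d_P(R))\le 2$; you flag this yourself as the main obstacle. Your third step is left as an expectation. Your paragraph on $H^d_{(X,\det Y)}(R)$ first asserts length $2$ and then contradicts itself; the correct length is $n$.

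The structural fact you are missing is that $H^1_{(\Delta)}(\KK[Y])$, with $\Delta=\det Y$, is \emph{uniserial}. It has the unique composition series $0=M_0\subsetneq M_1\subsetneq\cdots\subsetneq M_n$, where $M_i$ is generated by the class of $\Delta^{-i}$ and $M_i/M_{i-1}$ is simple with support $V(I_{n-i+1}(Y))$. By Kashiwara's equivalence, every quotient of $E_X\otimes H^1_{(\Delta)}(\KK[Y])$ is $E_X\otimes(M_n/M_k)$ for a single index $k$. So the whole problem is to determine $k$.

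In particular, the factor supported at the origin lies in every nonzero quotient. This makes your third step automatic. Note also that $E_X\otimes E_Y$ is the head of this module, not a submodule, so asking whether its socle lies in the image of $H^{d-1}_I(R)$ does not make sense. Your second step, nonvanishing after inverting $y_{n,s}$ using $d(n,n)=d(n-1,n)$, gives exactly $k\le n-2$.

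For the upper bound, your plan has no working mechanism. You propose to show that the intermediate factors lie in the image of $H^{d-1}_I(R)\to H^d_{(X,\Delta)}(R)$, using Proposition \ref{prop:cdI(n,s)} and characteristic cycles. But that proposition relates $I(n,n)$ to $P(n-2,n)$ only through a specialization, i.e.\ a ring homomorphism. This bounds $\ara$ and $\lcd$, but it does not identify $H^{d-1}_I(R)$ or the connecting map as $D$-modules.

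The idea you need is to apply Lemma \ref{lem:localization} twice.
\begin{itemize}
\item After inverting $y_{n,s}$, the $2$-minor $[i,n\mid j,s]_Y=y_{i,j}y_{n,s}-y_{i,s}y_{n,j}$ (with $i<n$, $j<s$) is a unit multiple of $y'_{i,j}$.
\item Inverting it as well turns $P(n,n)$ into an extension of $P(n-2,n)$, whose cohomological dimension is at most $d(n-2,n)=d-1$.
\item Hence $H^d_P(R)$ is killed by inverting $y_{n,s}\cdot[i,n\mid j,s]_Y$. So every composition factor is supported in $V(y_{n,s}[i,n\mid j,s]_Y)$.
\item For $r\ge 3$, $V(I_r(Y))$ contains rank-two matrices on which this product does not vanish. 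This forces $k\ge n-2$.
\end{itemize}
Thus $H^d_P(R)\cong E_X\otimes(M_n/M_{n-2})$, which has length $2$.
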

\begin{proof}
If $s>n$ we know from Case 1 in the proof of Theorem \ref{thm:cdP(n,s)} that $H^d_P(R)\cong H^d_{I_n(Y)+(X)}(R)$. Since the variables in the standard generators of $(X)$ are disjoint from those in the standard generators of $I_n(Y)$ and since $d=\lcd(I_n(Y)+(X))$, a suitable Grothendieck spectral sequence shows that $H^d_P(R)=H^{d'}_{I_n(Y)}(R)\otimes_R H^{d''}_{(X)}(R)$ where $d'=\lcd(I_n(Y))$ and $d''=\lcd((X))$. By Kashiwara equivalence  then,    the length of the $D_R$-module $H^d_P(R)$ is equal to that of the $D_{R'}$-module $H^{d'}_{I_n(Y)}(R')$ where $R'=\KK[Y]$. However, for the ideal of maximal minors of a non-square matrix, it is well-known that the top local cohomology module has $D$-length 1; this seems to have appeared in print first in  \cite{Witt2012}.

If $n=s$, Case 1 of the proof of Theorem \ref{thm:cdP(n,s)} yields that $H^d_P(R)$ is a quotient of $H^d_{I_n(Y)+(X)}(R)$. Now, however, $Y$ is a square matrix with unique maximal minor $\Delta$ and $\lcd(I_n(Y))=1$. Moreover, $H^1_{I_n(Y)}(\KK[Y])$ has a \emph{unique} composition chain 
\[
0=M_0\subsetneq M_1\subsetneq \ldots\subsetneq M_n=H^1_{I_n(Y)}(\KK[Y]),
\]
where $M_i$ is generated by the class of $1/\Delta^{i}$ and has support equal to the variety of $I_n(Y)$, and where each quotient $M_i/M_{i-1}$ is an irreducible $D$-module with support equal to the variety of the ideal $I_{n-i+1}(Y)$; see \cite{RWW}.

By Lemma \ref{lem:localization} and general properties of local cohomology, inverting $y_{n,s}$ on $H^d_P(R)$ yields $H^d_{P'(n-1,s)}(R[y_{n,s}^{-1}])$ with notation as in that lemma. For $n=s$, the quantities $d(n,s)$ and $d(n-1,s)$ agree, and so $d=\lcd(P'(n-1,s))$. It follows that $H^d_P(R)\otimes_R R_{y_{n,s}}$ is not zero and thus $H^d_P(R)\neq M_n/M_{n-1}$.

Now consider a 2-minor of $Y$ that involves $y_{n,s}$, such as $[i,n\mid j,s]_Y= y_{i,j}y_{n,s}-y_{i,s}y_{n,j}$, and suppose we were to invert first $y_{n,s}$ and then $[i,n\mid j,s]_Y$.  In the presence of invertible $y_{n,s}$, $[i,n\mid j,s]_Y$ equals $y'_{i,j}$ in Lemma \ref{lem:localization} up to multiplication by a unit, and so $H^d_P(R)[1/y_{n,s},1/[i,n\mid j,s]_Y]$ is isomorphic to a tensor product with $H^d_{P(n-2,s)}(R)$.  But for $n=s$, $d(n-1,s)=d(n-2,s)+1$, and so this second localization kills $H^d_P(R)$. 
It follows, that no composition factor of $H^d_P(R)$ can contain any $M_i/M_{i-1}$ for $i<n-1$, and so the structure of $H^1_{I_n(Y)}(R)$ forces that $H^d_P(R)=M_n/M_{n-2}$ of length $2$.
\end{proof}

\section{Residual intersections that are set-theoretic complete intersections}
\label{sec:STCI}

In this section, we construct residual intersections of complete intersection ideals as well as those of height three perfect Gorenstein ideals which turn out to be set-theoretic complete intersections. These ideals arise as careful linear specializations of the corresponding generic residual intersections; see \cite{Valla} for related constructions.

In \cite[Theorem 4.9]{BMSMP}, a sharp upper bound on the arithmetic rank of the residual intersections of a complete intersection ideal is obtained over any Noetherian local ring. Moreover, the result shows that when such residual intersections---in a polynomial extension of the base ring---are generic, they fail to be set-theoretic complete intersections in characteristic zero.  It is then natural to search for a sharp lower bound for the arithmetic ranks among all residual intersections of a complete intersection ideal in some polynomial extension of the base ring. In this direction, we prove:

\begin{thm}\label{thm:STCIforBMSMP}
Choose $s\geq n> 1$ and let $I$ be an ideal generated by a regular sequence of $n$ elements in a Gorenstein local ring $(A,\frakm)$. There exists an $s$-residual intersection of $IB$, in a polynomial extension $B$ of $A$, which is a set-theoretic complete intersection.
\end{thm}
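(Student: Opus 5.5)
Let $I=(a_1,\ldots,a_n)$ with $a_1,\ldots,a_n$ a regular sequence in $A$. The plan is to start from the generic $s$-residual intersection of $I$, namely $\RI(s;I)=\fraka:IB$ with $\fraka=I_1(Y[a_1\ \cdots\ a_n]^{\tr})$ for an $s\times n$ matrix of indeterminates $Y$. By the theory of residual intersections of complete intersections (Huneke, Huneke--Ulrich), this ideal equals
\[
\fraka+I_n(Y)\subseteq B=A[Y].
\]
I will linearly specialize some entries of $Y$ to $0$, keeping $\fraka'$ generated by $s$ elements, so that $J'=\fraka':IB'$ still has height $\geq s$. The goal is for $J'$ to be generated up to radical by $s$ elements.

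The natural choice is a \enquote{staircase} or banded matrix. Take $Y'$ to be the $s\times n$ matrix with $y'_{ij}=0$ unless $0\le i-j\le s-n$. Its maximal minors are then determined by the diagonal-band structure, and $I_n(Y')$ should be generated up to radical by $s-n+1$ elements, for instance via the classical construction for ideals of maximal minors of Toeplitz or band matrices (Valla-type constructions). Concretely, I would try the elements
\[
g_k=\sum_{i+j=k} y'_{i,j}\cdots,
\]
or the products of the diagonal entries. Alternatively, one could specialize more aggressively so that $I_n(Y')$ becomes a monomial-type ideal whose radical is easy to describe.

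The simplest route to test is to make $Y'$ block-structured. Set
\[
Y'=\begin{pmatrix} D\\ Z\end{pmatrix},
\]
where $D=\operatorname{diag}(t_1,\ldots,t_n)$ is generic diagonal and $Z$ is $(s-n)\times n$. Then $\fraka'=(t_1a_1,\ldots,t_na_n)+I_1(Z[a]^{\tr})$. The colon $\fraka':I$ should contain $t_1\cdots t_n$ together with the $s-n$ linear forms $\sum_j z_{kj}a_j$. One possible conclusion is that $J'=\fraka'+(\prod t_j)$ up to radical. A cleaner alternative is to take $Z$ with a single nonzero entry pattern. I would then compute $J'$ directly: localizing at each $t_j$ reduces to a residual intersection of the smaller complete intersection $(a_1,\ldots,\hat a_j,\ldots,a_n)$, which suggests an induction on $n$.

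The key steps are as follows.
\begin{enumerate}[leftmargin=*, align=left, label=(\arabic*)]
\item Choose the specialization $Y'$ explicitly, together with explicit elements $g_1,\ldots,g_s\in J'$.
\item Verify $\height(J')\ge s$ so that $J'$ is an $s$-residual intersection. This can be done via Proposition \ref{prop:specialization}, using that the generic one is Cohen--Macaulay of height $s$ (as in \cite{HunekeUlrichGenericRI}), or by direct computation.
\item Show $\sqrt{(g_1,\ldots,g_s)}=\sqrt{J'}$ by checking containment at every prime of $B'$. This reduces, after passing to $A/\frakm$-points heuristically, to a set-theoretic computation on the variables of $Y'$, using that $a_1,\ldots,a_n$ is regular so $A\to A[\text{vars}]$ behaves like a generic situation.
\end{enumerate}

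The main obstacle is step (3): producing exactly $s$ set-theoretic generators. $I_n$ of the specialized matrix has to collapse to about $s-(\text{number of generators of }\fraka')+\ldots$ equations while the height stays $s$. I would first try the banded or triangular specialization where $I_n(Y')$ is set-theoretically generated by the $s-n+1$ diagonal sums of products, as in the Bruns--Schw\"anzl and Valla band constructions. I would then absorb the redundancy between these and the $s$ generators of $\fraka'$ using the regularity of the $a_j$. If that fails, I would fall back on the diagonal-block specialization and an induction on $n$ via localization at the $t_j$.
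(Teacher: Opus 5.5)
Your proposal is not yet a proof. It lists several candidate specializations without committing to one, and the step you flag as the main obstacle, getting down to exactly $s$ set-theoretic generators, is the whole content of the theorem.

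\textbf{The missing idea.} Specialize so that two things hold at once:
\begin{enumerate}[label=(\roman*)]
\item $\sqrt{I_n(Y')}$ is generated by $s-n+1$ \emph{new indeterminates} $u_n,\ldots,u_s$;
\item every entry of $s-n+1$ full rows of $Y'$ lies in $(u_n,\ldots,u_s)$.
\end{enumerate}
Then those $s-n+1$ generators of $\fraka'$ are redundant modulo $\sqrt{I_n(Y')}$. So $\sqrt{J'}$ is generated by the other $n-1$ generators of $\fraka'$ together with $u_n,\ldots,u_s$, which is exactly $s$ elements.

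The paper achieves this with a Hankel-type pattern. The $(i,j)$ entry becomes $u_{i+j-1}$ when $n+1\le i+j\le s+1$, and $0$ when $i+j>s+1$; the remaining entries stay generic. The minor on rows $k-n+1,\ldots,k$ is then $\pm u_k^n$ modulo $(u_{k+1},\ldots,u_s)$, so descending induction gives $\sqrt{I_n(Y')}=(u_n,\ldots,u_s)$. Rows $n,\ldots,s$ lie entirely in this ideal. Height $s$ is checked by showing that the $s$ resulting generators have initial terms in disjoint variables. Proposition \ref{prop:specialization} then makes this a specialization of the generic residual intersection.

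\textbf{Why your candidates do not supply this.}
\begin{itemize}
\item \emph{Band matrix with independent entries.} No entry lies in $\sqrt{I_n(Y')}$: for an entry in column $j$, the locus where some other column vanishes lies in $V(I_n(Y'))$ but not in the zero set of that entry. So the mechanism that makes the count work is absent. Even granting the unproved claim $\ara(I_n(Y'))=s-n+1$, you still face up to $2s-n+1$ generators. ``Absorbing the surplus using regularity of the $a_j$'' is not an argument.
\item \emph{Block specialization.} The guess $\sqrt{J'}=\sqrt{\fraka'+(t_1\cdots t_n)}$ is false. Take $n=2$, $s=3$, with $Y'$ having rows $(t_1,0),(0,t_2),(z_1,z_2)$. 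The prime $\frakm B'+(t_1)$ contains $\fraka'+(t_1t_2)$, but it does not contain the minor $t_2z_1\in I_2(Y')\subseteq J'$.
\item \emph{Induction by localizing at the $t_j$.} This cannot give what you need. An upper bound on arithmetic rank requires global generators, and such bounds do not patch together from localizations.
\end{itemize}

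\textbf{Steps (2) and (3) as stated.} Step (3)'s ``passing to $A/\frakm$-points heuristically'' does not prove anything. Also, Proposition \ref{prop:specialization} needs a graded ring over a field or $\ZZ$, not over the local ring $A$.

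The clean route, which the paper takes, is to work generically first. Build everything over a polynomial ring over $\ZZ$ in the matrix entries plus indeterminates $y_1,\ldots,y_n$ standing in for the regular sequence, and only then map $y_i\mapsto a_i$. The equality $\sqrt{J'}=\sqrt{(g_1,\ldots,g_s)}$ is witnessed by finitely many polynomial identities, and these survive any ring homomorphism. So $\ara\le s$ descends to the extension of $A$, while the residual-intersection property gives height $\ge s$ there.
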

\begin{proof}
We first construct such a residual intersection for an ideal of $n$ variables in a polynomial ring over the integers. Let $X=(x_{i,j})$ and $Y=(y_i)$ be matrices of variables of sizes $s\times n$ and $n\times1$ respectively. Consider the polynomial ring $\ZZ[X,Y]$. Then, the ideal $(XY):(Y)$ is an $s$-residual intersection of $(Y)$. By \cite[Example 3.4]{HunekeUlrichRI},
\[
J\colonequals (XY):(Y)=(XY)+I_n(X).
\]
As $J$ is an $s$-residual intersection, $\height(J)=s$. The arithmetic rank of $J$ was computed in \cite[Theorem 4.7]{BMSMP} to be $\ara(J)=n(s-n+1)+1>s=\height(J)$. We now produce a non-trivial linear specialization of $J$ which is a set-theoretic complete intersection.

Define new variables $a_n,\ldots,a_s$ and set
\begin{align*}
x_{i,j}=
\begin{cases}
0 & \text{if } i+j>s+1,\\
a_{i+j-1} & \text{if } n+1\leq i+j\leq s+1.
\end{cases}
\end{align*}
We denote this specialization of $X$ as $X'$. Visually, the variables in each anti-diagonal are set to be equal starting from the anti-diagonal containing the last entry of the first row all the way to the anti-diagonal containing the first entry of the last row, while all lower anti-diagonal variables are set to be zero. Schematically,
\[
X'=
\begin{pmatrix}
x_{1,1} & x_{1,2} & \cdots & x_{1,n-1} & a_n\\
x_{2,1} & x_{2,2} & \cdots & a_n & a_{n+1}\\
\vdots & \vdots & \iddots & \iddots & \vdots\\
x_{n-1,1} & a_n & \iddots & \cdots & a_{2n-2}\\
a_n & a_{n+1} & \cdots & a_{2n-2} & a_{2n-1}\\
\vdots & \vdots & & \vdots & \vdots\\
a_{s-n+1} & a_{s-n+2} & \cdots & a_{s-1} & a_s\\
a_{s-n+2} & a_{s-n+3} & \cdots & a_s & 0\\
\vdots & \iddots & \iddots & \vdots & \vdots\\
a_{s-1} & a_s & \cdots & 0 & 0\\
a_s & 0 & \cdots & 0 & 0
\end{pmatrix}_{s\times n}.
\]
where we use the convention that $a_k=0$ if $k>s$.

We shall show that the ideal $J'\colonequals(X'Y)+I_n(X')$ is a linear specialization of $J$ and that it is a set-theoretic complete intersection.

We begin by showing that the matrix specialization of $X$ to $X'$ corresponds to a regular sequence in $\ZZ[X]$. The matrix specialization involves going modulo the sequence of elements
\begin{align*}
&x_{s,n},\;x_{s,n-1},\;x_{s-1,n},\;x_{s,n-2},
\;x_{s-1,n-1},\;x_{s-2,n},\;\ldots,\;x_{s,2},\ldots,\; x_{s-n+2,n},\\
&x_{s,1}-x_{s-1,2},\;x_{s-1,2}-x_{s-2,3},
\;\ldots,\;x_{s-n+2,n-1}-x_{s-n+1,n},\\
&x_{s-1,1}-x_{s-2,2},\;\ldots,\;x_{2,n-1}-x_{1,n}.
\end{align*}
Under the lexicographic monomial order induced by
\[
x_{1,1}<x_{1,2}<\cdots<x_{1,n}
<x_{2,1}<\cdots<x_{2,n}<\cdots<x_{s,n},
\]
each term in the above sequence has a distinct initial monomial. Hence, the above sequence is regular. We now find the set-theoretic generators of $I_n(X')$.

\noindent \textbf{\emph{Claim}}:
$\sqrt{I_n(X')}=(a_n,\ldots,a_s)$.

\noindent \emph{Proof of claim}: The containment $I_n(X')\subseteq(a_n,\ldots,a_s)$ is clear as any $n$ distinct rows of $X'$ must have among them at least one row with only $a_k$ and $0$ entries. It remains to prove that $a_k\in\sqrt{I_n(X')}$ for all $k\in\{n,\ldots,s\}$. Observe that the minor $[s-n+1,\ldots,s]$ obtained by choosing the last $n$ rows of $X'$ evaluates to $a_s^n$, which implies that $a_s\in\sqrt{I_n(X')}$. Next, choose the minor $[s-n,\ldots,s-1]$ and evaluate along the last row to obtain
\[
[s-n,\ldots,s-1]=a_{s-1}^n+a_sf,
\]
for some $f\in\ZZ[X',Y]$. Hence, $a_{s-1}\in\sqrt{I_n(X')}$. Proceeding inductively, if we assume that $a_s,a_{s-1},\ldots,a_{k+1}\in\sqrt{I_n(X')}$, then evaluation of the minor $[k-n+1,\ldots,k]$ along the last row yields
\[
[k-n+1,\ldots,k]
=\pm a_k^n+a_{k+1}f_1+\cdots+a_{k+n-1}f_{n-1},
\]
for some $f_i\in\ZZ[X',Y]$. Hence, $a_k\in\sqrt{I_n(X')}$ for all $k$ and the claim follows.

\smallskip

We now return to the proof of the theorem. The claim above asserts that
\[
\sqrt{J'}=\sqrt{(X'Y)+(a_n,\ldots,a_s)}.
\]
Observe that the entries in the last $s-n+1$ rows lie in the ideal $(a_n,\ldots,a_s)$, and define  $(g_1,\ldots,g_s)\coloneqq (X'Y)^{\text{tr}}$, the transpose.  Then, the ideal $(X'Y)+(a_n,\ldots,a_s)$ is generated by $g_1,\ldots,g_{n-1},a_n,\ldots,a_s$. It remains to prove that $\height(J')=s$.

Reducing the elements $g_1,\ldots,g_{n-1}$ modulo the ideal $(a_n,\ldots,a_s)$ gives us the elements $h_1,\ldots,h_{n-1}$ defined as follows:
\[
h_i=\sum_{j=1}^{n-i}x_{i,j}y_j.
\]
Clearly, the set $\Gamma=\{h_1,\ldots,h_{n-1},a_n,\ldots,a_s\}$ of $s$ elements also forms a set-theoretic generating set of $J'$. Observe that these polynomials form a homogeneous regular sequence in $\ZZ[X',Y]$. Indeed, define the lexicographic monomial order on $\ZZ[X',Y]$ induced from
\[
y_n>y_{n-1}>\cdots>y_1
\quad\text{and}\quad
y_k>x_{i,j}\quad\text{for all }i,j,k,
\]
where the order between the variables $x_{i,j}$ is arbitrary. Then the initial monomial of $h_i$ is $x_{i,n-i}y_{n-i}$, and thus  the elements of $\Gamma$ have initial monomials involving  disjoint sets of  variables. Hence, the set-theoretic complete intersection ideal $J'$ satisfies $\height(J')=s=\height(J)$ and so $J'$ is a linear specialization of $J$ by Proposition \ref{prop:specialization} since the generic $s$-residual intersection $\ZZ[X,Y]/J$ is Cohen--Macaulay.

Finally, let $\underline f:=(f_1,\ldots,f_n)^{\tr}$ be a regular sequence in the Gorenstein local ring $A$ arranged as a column vector. Define
\[
\varphi:\ZZ[y_1,\ldots,y_n][X']\longrightarrow A[X']
\]
with $\varphi(y_i)=f_i$ and each variable of $X'$ mapping to itself. Set
\[
J'' :=(X'\underline f):(\underline f)
=I_n(X')+(X'\underline f),
\]
an $s$-residual intersection of the ideal $(\underline f)A[X']$, by the structure theorem of residual intersections of a complete intersection \cite[Example 3.4, Theorem 4.7]{HunekeUlrichRI}. Since arithmetic rank can only go down along a ring homomorphism, we have
\[
s\leq\ara(J'')\leq\ara(J')=\height(J')=s,
\]
so that $\ara(J'')=s$. Thus $J''$ is a set-theoretic complete intersection.
\end{proof}

The arguments in the above proof motivate our next result. We can pass from the generic residual intersection of a generic height three Gorenstein ideal to a specific residual intersection of a particular height three Gorenstein ideal by carefully constructing our linear specialization to ensure that the target is a set-theoretic complete intersection. Note that such a specialization trivially exists when the ground field is infinite: take the specialized ideal to be $\frakm$-primary.  

We now construct a nontrivial linear specialization of the generic residual intersection $P(n,s)$ which is a set-theoretic complete intersection. Recall that by Theorems \ref{thm:cdP(n,s)} and \ref{thm:ara-in-char-p}, $P(n,s)$ is itself never a set-theoretic complete intersection. 

\begin{thm}\label{thm:STCIforGor}
Let $\KK$ be a field, $X$ an odd-sized generic alternating matrix of size $n\geq 3$, and $I$ be the ideal in $\KK[X]$ generated by the submaximal Pfaffians of $X$. Recall that $I$ is a generic Gorenstein ideal of height three. Let $P$ be the generic $s$-residual intersection of $I$.

$P$ can be linearly specialized to a set-theoretic complete intersection $P'$ such that $P'$ is an $s$-residual intersection of a specialization $I'$ of $I$. Moreover, $I'$ remains a Gorenstein ideal of height three with the same number of generators as $I$. 
\end{thm}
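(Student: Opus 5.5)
The plan is to imitate the proof of Theorem \ref{thm:STCIforBMSMP}, specialising only the matrix $Y=Y_{n,s}$ while keeping $X$ generic for as long as possible. The cleanest route uses the regular sequence $f_1,\ldots,f_s\subseteq P(n,s)$ built in the proof of Theorem \ref{thm:P(n,s)Freg}. Its initial monomials, with respect to the lexicographic order used there, involve pairwise disjoint sets of variables and omit many variables: the $y_{c,d}$ with $c+d>s+1$, the variables not hit, $y_{n,s}$, and so on. The idea is to set to zero (or identify) a set $\underline{x}$ of linear forms in the variables that do not occur in these initial terms. The goal is that the image $P'$ of $P(n,s)$ in $R/(\underline{x})$ satisfies
\[
\sqrt{P'}=\sqrt{(\bar f_1,\ldots,\bar f_s)}.
\]
Since the $\bar f_i$ keep their initial monomials, they still form a regular sequence, so $\height(P')=s$ with $s$ set-theoretic generators.

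The first key step is to choose the specialization of $Y$ in the spirit of $X'$ in Theorem \ref{thm:STCIforBMSMP}. I would zero out the lower anti-diagonals of $Y$ and make anti-diagonals constant, keeping every variable that occurs in an initial monomial $\init_<(f_i)$. Each specialised generator $[i_1,\ldots,i_t]$ then becomes triangular enough to expand as $\pm\bar f_k^{e}$ plus terms in the previously handled $\bar f_j$. This is the same inductive radical argument as in the Claim of Theorem \ref{thm:STCIforBMSMP}, and I would induct along the rank function $w$ on $\mathcal P$ from Corollary \ref{cor:dimauxring}.

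The second step is to show the specialization is legitimate. Because the initial terms of the killed linear forms are distinct variables, $\underline{x}$ is a regular sequence on $R$. Since $\height(P')=s=\height(P)$ and $R/P$ is Cohen--Macaulay by Corollary \ref{cor:P(n,s)primeCM}, Proposition \ref{prop:specialization} shows that $\underline{x}$ is regular on $R/P$, so $P'$ is a linear specialization of $P$.

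For the residual intersection statement, $X$ is untouched. The image $I'$ of $I$ is then the ideal of submaximal Pfaffians of the generic alternating matrix $X$ in a polynomial ring over $\KK[X]$, so $I'$ is Gorenstein of height three with $\mu(I')=n$. By the Kustin--Ulrich description, $P'$ equals $((p_1,\ldots,p_n)Y'):I'$ with $Y'$ the specialised matrix. Since $\height P'=s$, this is an $s$-residual intersection by definition. Alternatively I would appeal to the specialization theorem \cite[Theorem 1.5]{HunekeUlrichGenericRI}, since $I$ satisfies $G_\infty$.

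If specialising $Y$ alone cannot make the radical collapse, the fallback is to also specialise entries of $X$ (for instance setting some $x_{a,b}$ to zero outside the initial terms). One then has to recheck that $I'$ still has height three with $n$ minimal generators, via \cite[Theorem 2.1]{BuchsbaumEisenbudAJM}.

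The main obstacle I expect is the radical computation $\sqrt{P'}=\sqrt{(\bar f_1,\ldots,\bar f_s)}$. Ordering the Pfaffian generators so that each new one is a power of some $\bar f_k$ modulo earlier ones requires a careful choice of which variables of $Y$ to kill. I would first test small cases such as $n=3$ (cf. Remark \ref{rmk:P(3,s)}, where it reduces to Theorem \ref{thm:STCIforBMSMP} with $Y$ playing the role of $X$) and $n=5$, $s=5,6$, to find the right pattern.
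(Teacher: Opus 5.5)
There is a genuine gap. Your closing bookkeeping is right: linear forms with distinct leading variables, Proposition \ref{prop:specialization}, and the Kustin--Ulrich structure theorem are how the paper finishes. Your guess that the regular sequence $f_1,\ldots,f_s$ from the proof of Theorem \ref{thm:P(n,s)Freg} should generate $P'$ up to radical is also correct, since these are exactly the elements the paper's radical computation uses. But the content of the theorem is the explicit specialization and the computation of $\sqrt{P'}$, and you supply neither.

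Worse, your primary route of keeping $X$ generic cannot work in general. Take $n=5$, $s=3$, $\chr\KK=0$.
\begin{enumerate}
\item Since $s<n$, every generator of $P'$ vanishes at $X=0$. So $P'\subseteq\mathfrak q:=(X)R'$, a prime ideal.
\item In $R'_{\mathfrak q}=K[X]_{(X)}$, with $K$ the fraction field of $\KK[Y']$, the entries of $Y'$ are scalars.
\item The ideal generated by the $[T]$ is unchanged under $(X,Y)\mapsto(gXg^{\tr},gYh)$ for $g\in\mathrm{GL}_5(K)$, $h\in\mathrm{GL}_3(K)$. So, up to a linear automorphism of $K[X]$, you may replace $Y'$ by the matrix $E_r$ with $I_r$ in its upper left corner and zeros elsewhere, where $r$ is the rank of $Y'$ over $K$.
\item Then $[T]$ becomes $\pm$ the Pfaffian of $X$ with the rows and columns indexed by $T$ deleted, for $T\subseteq[r]$, and $0$ otherwise.
\item If $r<3$, at most two generators survive, forcing $\height P'\leq 2$.
\item If $r=3$, you get $(x_{45})+I_2\begin{pmatrix}x_{14}&x_{24}&x_{34}\\x_{15}&x_{25}&x_{35}\end{pmatrix}$. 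Up to renaming variables this is $P(2,3)$, whose cohomological dimension is $4$ by Theorem \ref{thm:cdP(n,s)}.
\end{enumerate}
Hence $\lcd(P')\geq 4>s$ for every specialization of $Y$ alone. So $X$ must be specialized too, and your fallback for doing that is left unspecified.

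The missing idea is to collapse each initial monomial to a pure power. Instead of keeping the variables of $\init_<(f_k)$ and killing the others, the paper sends every variable occurring in $\init_<(f_k)$, in $X$ as well as in $Y$, to one new variable $a_k$. Explicitly, $x_{i,j}\mapsto a_{i+j-n}$, $y_{i,j}\mapsto a_{i+j-1}$ for $j\geq 2$, and $y_{i,1}\mapsto a_{2i-n}$. Variables whose index would exceed $s$ go to $0$, and those with index $\leq 0$ stay generic.

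Then $f'_k\equiv\pm a_k^{e_k}$ modulo $(a_{k+1},\ldots,a_s)$, and descending induction gives $(a_1,\ldots,a_s)\subseteq\sqrt{P'}$. Conversely, columns $2,\ldots,s$ of $Y'$ lie in $(a_1,\ldots,a_s)$. The one remaining generator $[1]$ lies there too: the submaximal Pfaffians of $X'$ that multiply the generic entries $y_{k,1}$ come from submatrices whose entries on and below the antidiagonal lie in $(a_1,\ldots,a_s)$. So $\sqrt{P'}=(a_1,\ldots,a_s)$, a prime of height $s$.

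Because $X$ is now specialized, the residual-intersection step is no longer automatic: you must show $\height I'=3$. The paper uses three submaximal Pfaffians of $X'$ whose revlex initial terms $x_{1,n-1}x_{2,n-2}\cdots x_{\frac{n-1}{2},\frac{n+1}{2}}$, $a_1^{(n-1)/2}$, $a_2^{(n-1)/2}$ form a regular sequence. Proposition \ref{prop:specialization} then makes $I'$ a specialization of $I$, hence Gorenstein with $\mu(I')=n$, and Kustin--Ulrich applies.
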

\begin{proof}
    Let $n\geq3$ be an odd integer. Let $X$, $Y$, $M$, $R$ and $P\colonequals P(n,s)$ be as in Setup \ref{setup}. As $P$ is an $s$-residual intersection, $\height(P)=s$. By Theorems \ref{thm:cdP(n,s)} and \ref{thm:ara-in-char-p}, the arithmetic rank of $P$ is $\binom{s}{2}-\binom{s-n}{2}+1$. 

    Define new variables $a_1,\ldots,a_s$, and set
    \begin{align*}
    &x_{i,j}=
    \begin{cases}
        0 & \text{if } i+j-n>s\\
        a_{i+j-n} & \text{if } 1\leq i+j-n\leq s,
    \end{cases}\\
    &y_{i,j}=
    \begin{cases}
        0 & \text{if } j\geq2 \text{ and } i+j-1>s\\
        a_{i+j-1} & \text{if } j\geq 2 \text{ and } i+j-1\leq s\\
        0 & \text{if } j=1 \text{ and } 2i-n>s\\
        a_{2i-n} & \text{if } j=1 \text{ and } 1\leq 2i-n\leq s.
    \end{cases}
    \end{align*}
    Denote the corresponding specializations of $X$, $Y$, and $M$ as $X'$, $Y'$, and $M'$ respectively. Visually, for $n=5$ and $s=7$, 
    \[
    X'=
    \begin{pmatrix}
    0&x_{12}&x_{13}&x_{14}&a_1\\
    -x_{12}&0&x_{23}&a_1&a_2\\
    -x_{13}&-x_{23}&0&a_2&a_3\\
    -x_{14}&-a_1&-a_2&0&a_4\\
    -a_1&-a_2&-a_3&-a_4&0
    \end{pmatrix},
    \qquad
    Y'=
    \begin{pmatrix}
    y_{1,1}&a_2&a_3&a_4&a_5&a_6&a_7\\
    y_{2,1}&a_3&a_4&a_5&a_6&a_7&0\\
    a_1&a_4&a_5&a_6&a_7&0&0\\
    a_3&a_5&a_6&a_7&0&0&0\\
    a_5&a_6&a_7&0&0&0&0
    \end{pmatrix}.
    \]
    For $n=5$ and $s=3$, 
    \[
    X'=
    \begin{pmatrix}
    0&x_{12}&x_{13}&x_{14}&a_1\\
    -x_{12}&0&x_{23}&a_1&a_2\\
    -x_{13}&-x_{23}&0&a_2&a_3\\
    -x_{14}&-a_1&-a_2&0&0\\
    -a_1&-a_2&-a_3&0&0
    \end{pmatrix},
    \qquad
    Y'=
    \begin{pmatrix}
    y_{1,1}&a_2&a_3\\
    y_{2,1}&a_3&0\\
    a_1&0&0\\
    a_3&0&0\\
    0&0&0
    \end{pmatrix}.
    \]
    Employing techniques similar to the proof of Theorem \ref{thm:STCIforBMSMP}, one may observe that the matrix specializations above correspond to going modulo a linear regular sequence in $R=\KK[X,Y]$.
    Recall that
    \[
    M'=\begin{pmatrix}
        X' & Y'\\
        -Y'^{\tr} & 0
    \end{pmatrix}.
    \]
    Define $P'$ to be the ideal of $R'\colonequals \KK[X',Y']$ generated by Pfaffians of all sizes of $M'$ containing the top left block $X'$. We shall show that $P'$ is a linear specialization of $P$ and that it is a set-theoretic complete intersection. 

    We begin by showing that $\{a_1,\ldots,a_s\}$ is a set-theoretic generating set of $P'$. If $s\geq n$, then $[s-n+1,\ldots,s]=a_s^n$. If $s<n$ and $s$ is odd, then $[1,\ldots,s]=\pm a_s^{\frac{n+s}{2}}$. If $s<n$ and $s$ is even, then $[2,\ldots,s]=\pm a_s^{\frac{n+s-1}{2}}$. Hence, $a_s\in\sqrt{P'}$. Now, suppose we have already shown that $a_s,\ldots,a_{k+1}\in\sqrt{P'}$. Then, if $k\geq n$, by Laplace expansion of the Pfaffian along the $k^{\text{th}}$ column of $Y$,
    \[
    [k-n+1,\ldots,k] =\pm a_k^n+f_{k+1}a_{k+1}+\cdots+f_sa_s,
    \]
    for $f_i\in \KK[X',Y']$. Similarly, if $k<n$, we obtain similar expressions for $[1,\ldots,k]$ if $k$ is odd and for $[2,\ldots,k]$ if $k$ is even. In all cases, we can conclude that $a_k\in\sqrt{P'}$. By induction, $(a_1,\ldots,a_s)\subseteq\sqrt{P'}$. 

    To show the other inclusion, observe that the only generator of $P'$ which is not obviously in $(a_1,\ldots,a_s)$ is $[1]$, as every entry of $Y$ from the second column onwards is in $(a_1,\ldots,a_s)$. However, 
    \[
    [1]=y_{1,1}\text{Pf}_{2,3,\ldots,n}(X')-y_{2,1}\text{Pf}_{1,3,\ldots,n}(X')+\cdots\pm y_{\frac{n-1}{2},1}\text{Pf}_{1,\ldots,\widehat{\frac{n-1}{2}},\ldots,n}(X')+F,
    \]
    where $F\in(a_1,\ldots,a_s)$ and $\text{Pf}_{i_1,\ldots,i_t}(X)$ is the Pfaffian of the alternating submatrix of $X$ formed by the rows and columns numbered $i_1,\ldots,i_t$. Each of the Pfaffians appearing in the above expression involves submatrices with `too many' entries in $(a_1,\ldots,a_s)$. More specifically, all the entries on and below the antidiagonal of these submatrices are in $(a_1,\ldots,a_s)$. Therefore, $[1]\in (a_1,\ldots,a_s)$. 

    We have thus proved that $\sqrt{P'}=(a_1,\ldots,a_s)$, which leads us to conclude that $P'$ is a set-theoretic complete intersection. As $\height(P')=\height(P)=s$ and $R/P$ is Cohen--Macaulay, by Proposition \ref{prop:specialization}, it follows that $P'$ is a specialization of $P$.

    Finally, we show that $P'$ is a residual intersection of a perfect Gorenstein ideal of height $3$. To do so, we first prove that the ideal $I'$ of submaximal Pfaffians of $X'$ is a Gorenstein ideal of height $3$. Under any graded reverse lexicographic monomial order agreeing with the partial ordering of variables
    \begin{align*}
        a_k<a_l &\text{ if } k>l,\\
        a_k<x_{i,j} &\text{ if } k>i+j-n,\\
        x_{i,j}<x_{k,l}&\text{ if } i+j>k+l,
    \end{align*}
    consider the submaximal Pfaffians $[1,\ldots,n-1]$, $[1,\ldots,\frac{n-1}{2},\frac{n+3}{2},\ldots,n]$, and $[2,\ldots,n]$. Observe that
    \begin{eqnarray*}
    \init[1,\ldots,n-1]&=&x_{1,n-1}x_{2,n-2}\cdots x_{\frac{n-1}{2},\frac{n+1}{2}},\\
    \init[1,\ldots,\frac{n-1}{2},\frac{n+3}{2},\ldots,n]&=&a_1^{\frac{n-1}{2}},\\
    \init[2,\ldots,n]&=&a_2^{\frac{n-1}{2}}.
    \end{eqnarray*}
    As the initial terms of these submaximal Pfaffians form a regular sequence, so do the Pfaffians themselves. Thus the height of $I'$ is at least three, and since the ideal $I$ of submaximal Pfaffians of a generic alternating matrix has height three, the height of $I'$ cannot be any greater. Hence, by Proposition \ref{prop:specialization}, $I'$ is a specialization of $I$. The Gorenstein property is preserved under specialization. The number of generators of $I'$ is $n$ as every element of $X'$ is in the homogeneous maximal ideal of $R'$ and the height of $I'$ is three. By the structure theorem of residual intersections of height three Gorenstein ideals \cite[Theorem 10.5]{Kustin-Ulrich-Memoirs}, $P'$ is an $s$-residual intersection of $I'$. 
    \end{proof}

\section{A question on arithmetic rank}
\label{sec:questions}

In Section \ref{sec:genericRI}, we observed that for the ideal $P(n,n)=I(n,n)+(\det(Y))$, we have $\ara(P(n,n))=\ara(I(n,n))+1$. More generally, one can study how the arithmetic rank changes under adding a regular element.  

We ask the following natural question about the behaviour of arithmetic rank: 

\begin{question}\label{question:ara}
Let $\KK$ be a field, $R\coloneqq \KK[x_1,\ldots,x_t]$, and $I$ a proper ideal of $R$. Let $z$ be a new variable; set $S\coloneqq R[z]$. Is
\[\ara(IS+(z)) = \ara(I)+1?\qedhere\]
\end{question}
We start with a number of remarks on Question \ref{question:ara}.

\begin{rmk}\label{rmk:araretract}
In the setting of Question \ref{question:ara}, note that $\ara_S(IS)=\ara_R(I)$ as substituting $z=0$ in the set-theoretic generators of $IS$ in $S$ produces set-theoretic generators of $I$ in $R$. In fact, whenever an inclusion of rings $R \subseteq S$ admits an algebra retract, then $\ara(IS)=\ara(I)$ for any ideal $I$ of $R$. 

Additionally, substituting $z=0$ in the set-theoretic generators of $IS+(z)$ also yields
\[
\ara(I)\leq\ara(IS+(z))\leq\ara(I)+1. \qedhere
\]

\end{rmk}

\begin{rmk}
    In Question \ref{question:ara}, weaken the assumption on $R$ and consider it to be a Unique Factorization Domain (UFD). If we have $\ara(I)=1$, then indeed $\ara(IS+(z))=2$. For, if $\ara(IS+(z))=1$, suppose $f\in S$ generates $IS+(z)$ up to radical. Then $z^k\in(f)$ for some $k\in\NN$, but then $f$ is also a power of $z$ since $S=R[z]$ is also a UFD. Substituting $z=0$ then forces $I=0$, a contradiction.  
\end{rmk}

\begin{exa}
In Question \ref{question:ara}, if one only assumes that $R$ is a regular ring of finite Krull dimension containing a field, then the answer is in the \textit{negative}. The following example was pointed out to us by Bhargav Bhatt. Consider the ring
\[
R=\QQ[x,y]/(y^2-x^3+2),
\] 
$I=(x-3,y-5)\subseteq R$, and set $S=R[z]$. Observe that $R$ is a regular ring. We claim 
    \[
    \ara(I)=\ara(IS+(z))=2.
    \]
To see this, note that $y^2=x^3-2$ is the Weierstrass equation of the elliptic curve $Y^2Z=X^3-2Z^3$. By the Nagell--Lutz Theorem \cite[Corollary VIII.7.2]{Silverman}, the point $(3,5)$ does not have finite order in the elliptic curve. Hence, the height one prime ideal $I$ is not torsion in the divisor class group $\text{Cl}(R)$. If $\sqrt{(f)}=I$ for some $f\in R$, then, since $R$ is a Dedekind domain, $(f)=I^m$ for some $m\geq1$. This would make $[I]$ torsion in $\text{Cl}(R)$, a contradiction. This implies that $I$ cannot be singly generated up to radical and thus $\ara(I)=2$.

On the other hand, since $S$ is a polynomial ring over $R$ of Krull dimension $2$, by \cite[Theorem 1]{EisenbudEvans}, we get that $\ara(IS+(z))\le 2$, and the inequality cannot be strict since a purported single generator would need to (have a multiple that is a power of $z$ and hence) be a power of $z$, up to a unit.
\end{exa}

\begin{rmk}
    In Question \ref{question:ara}, if we only assume that $z$ is an element of $R$ that is regular on $R/I$, then while $\ara(I+(z))\leq\ara(I)+1$, this upper bound can be made \textit{arbitrarily weak}, that is, the arithmetic rank can fall by an arbitrarily large quantity when adding a regular element to the ideal.
    Indeed, let $t\geq1$, $R=\KK[x_1,x_2,z,y_1,\ldots,y_{2t}]$, and 
    \[
    I=(x_1z+x_2^2,x_1+x_2,x_1y_1+x_2y_2,\ldots,x_1y_{2t-1}+x_2y_{2t}).
    \]
    Note that the minimal primes of $I$ are $(x_1,x_2)$ and 
    \[
    (x_1+z,x_1+x_2,y_1-y_2,\ldots,y_{2t-1}-y_{2t}).
    \]
    Moreover, $I$ is radical because 
    \[
    I=(x_1,x_2)\cap (x_1+z,x_1+x_2,y_1-y_2,\ldots,y_{2t-1}-y_{2t}).
    \]
    As $I$ has a minimal prime of height $t+2$, the cohomological dimension of $I$ is at least $t+2$. Since $I$ has $t+2$ generators, $\lcd(I)=\ara(I)=t+2.$ Note that $z$ is a regular element on $R/I$ as it does not belong to either of the minimal primes. However, 
    \begin{align*}
    \sqrt{I+(z)}
    &=\sqrt{(z,x_2^2,x_1+x_2,x_1y_1+x_2y_2,\ldots,x_1y_{2t-1}+x_2y_{2t})}\\
    &=\sqrt{(z,x_2,x_1)}.
    \end{align*}
    Hence, $\ara(I+(z))=3$. 
\end{rmk}

\begin{rmk}\label{rmk:ara-realized-by-local-cohomology}
If $\ara(I)$ is realized by a local cohomology obstruction, that is,
$$H^{\ara(I)}_I(R) \neq 0,$$
then Question \ref{question:ara} clearly has an affirmative answer:
\[H^{\ara(I)+1}_{IS+(z)}(R[z]) \cong H^{\ara(I)}_I(R)\otimes_{\KK} H^1_{(z)}(\KK[z])=\bigoplus_{i\geq 1}H^{\ara(I)}_I(R)z^{-i} \neq 0.\]
This holds, for instance, when the ideal $I$ is a set-theoretic complete intersection. The content of the next proposition is that Question \ref{question:ara} also has a positive answer when the arithmetic rank of $I$ is realized by an \'etale cohomological obstruction.
\end{rmk}

\begin{prp}\label{prop:ara-realized-by-etale-cohomology}
    Let $\KK$ be an algebraically closed field, and $R$, $I$, and $S$ be as in Question \ref{question:ara}. Suppose $\ara(I)=n>1$. Let $U=\KK^t\setminus V(I)$ and $W=\KK^{t+1}\setminus V(IS+(z))$. Suppose $H_{\et}^{t+n-1}(U,G)\neq 0$ for some $\ell$-torsion finite Abelian group $G$ with $\ell$ invertible in $\KK$. Then $H_\et^{t+n+1}(W,G)\neq 0$. Hence, $\ara(IS+(z))=n+1$. 
\end{prp}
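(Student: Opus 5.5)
We will compare the étale cohomology of $W$ with that of $U$ using Mayer--Vietoris for the open cover $W=W_1\cup W_2$, where
\[
W_1\colonequals \KK^{t+1}\setminus V(IS)=U\times\mathbb{A}^1,\qquad W_2\colonequals \KK^{t+1}\setminus V(z)=\KK^t\times\mathbb{G}_m .
\]
Indeed $V(IS+(z))=V(IS)\cap V(z)$, so $W_1\cup W_2=W$. The intersection is $W_1\cap W_2=U\times\mathbb{G}_m$.

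The first step is to compute each term. By homotopy invariance of étale cohomology with $\ell$-torsion coefficients ($\ell$ invertible), we have
\[
H^i_\et(W_1,G)\cong H^i_\et(U,G),\qquad H^i_\et(W_2,G)\cong H^i_\et(\mathbb{G}_m,G),
\]
and the latter vanishes for $i\ge 2$. For the intersection we use the Künneth formula, or equivalently the Gysin/localization sequence for $U\times\mathbb{G}_m\subset U\times\mathbb{A}^1$ with closed complement $U\times\{0\}$ of codimension one. This gives
\[
H^i_\et(U\times\mathbb{G}_m,G)\cong H^i_\et(U,G)\oplus H^{i-1}_\et(U,G(-1)).
\]
Since $\KK$ is algebraically closed, the twist $G(-1)\cong G$ noncanonically, so the second summand may be read as $H^{i-1}_\et(U,G)$. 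If one wants to avoid Künneth with torsion coefficients, the Gysin sequence together with the splitting given by the projection $U\times\mathbb{G}_m\to U$ yields this directly.

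The second step is the vanishing input. Since $\ara(I)=n$, the open set $U$ is covered by $n$ principal affine opens $D(f_i)$, so Proposition \ref{prop:ara-from-et} gives $H^i_\et(U,G)=0$ for $i>t+n-1$.

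Now take $i=t+n$ in the Mayer--Vietoris sequence of Proposition \ref{prop:et-mv}:
\[
H^{t+n}_\et(W_1)\oplus H^{t+n}_\et(W_2)\to H^{t+n}_\et(W_1\cap W_2)\to H^{t+n+1}_\et(W)\to\cdots .
\]
Because $t+n\ge 2$, the left term is
\[
H^{t+n}_\et(U)\oplus 0=0 .
\]
Hence $H^{t+n}_\et(U\times\mathbb{G}_m)$ injects into $H^{t+n+1}_\et(W)$. By the first step,
\[
H^{t+n}_\et(U\times\mathbb{G}_m)\supseteq H^{t+n-1}_\et(U,G)\neq 0,
\]
so $H^{t+n+1}_\et(W,G)\neq 0$.

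Finally, $W$ is an open subset of $\KK^{t+1}$, of dimension $t+1$. If $IS+(z)$ were generated up to radical by $k$ elements, Proposition \ref{prop:ara-from-et} would force $H^i_\et(W,G)=0$ for all $i>t+k$. The nonvanishing just shown then gives $k\ge n+1$. Combined with the bound $\ara(IS+(z))\le\ara(I)+1$ from Remark \ref{rmk:araretract}, this yields $\ara(IS+(z))=n+1$.

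The main obstacle is the first step: justifying the decomposition of the cohomology of $U\times\mathbb{G}_m$ cleanly. Only the injection of the $H^{t+n-1}(U)$ piece is needed, and the Gysin boundary map provides it. The degree bookkeeping showing that the $W_1$ and $W_2$ terms vanish in degree $t+n$ is routine. The hypothesis $n>1$ is not strictly needed for the argument; at most it ensures $t+n\ge2$.
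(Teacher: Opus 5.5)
Your proof is correct. It shares the paper's skeleton: the same cover $W=W_1\cup W_2$, the same vanishing of $H^{t+n}(W_1,G)$ via Proposition~\ref{prop:ara-from-et} and homotopy invariance, and the same Mayer--Vietoris reduction to showing $H^{t+n}(W_1\cap W_2,G)\neq 0$.

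You differ from the paper only in that last step.

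\emph{The paper's route.} It passes to compactly supported cohomology:
\begin{itemize}
\item Poincar\'e duality on $U$ gives $\cptdim(U)=t-n+1$.
\item The excision sequence gives $\cptdim(V(I))=t-n$.
\item A K\"unneth-type additivity gives $\cptdim(V(I)\times\KK^*)=t-n+1$.
\item It decomposes $V(z\cdot IS)=(\KK^t\times\{0\})\sqcup(V(I)\times\KK^*)$ and applies two more excision sequences.
\item Poincar\'e duality on $W_{12}$ finishes the argument.
\end{itemize}

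\emph{Your route.} You recognize $W_{12}=U\times\mathbb{G}_m$ and use the Gysin sequence for $U\times\{0\}\subset U\times\mathbb{A}^1$. This is shorter and stays entirely in ordinary cohomology. It never invokes duality or compact supports, so it applies verbatim to any finite $\ell$-torsion $G$, whereas Proposition~\ref{prop:Poincareduality} is quoted only for $G=\ZZ/\ell\ZZ$.

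\emph{The hypothesis $n>1$.} You compute $H^i(W_2,G)\cong H^i(\mathbb{G}_m,G)=0$ for $i\geq 2$, which is sharper than the paper's bound $H^i(W_2,G)=0$ for $i>t+1$ from Proposition~\ref{prop:ara-from-et}. The paper needs $n>1$ precisely to make $t+n>t+1$. Your remark that $n>1$ can be dropped is therefore right; one only needs $t\geq 1$.

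\emph{A wording correction.} The Gysin boundary map $H^{t+n}(U\times\mathbb{G}_m,G)\to H^{t+n-1}(U,G(-1))$ is a surjection, not an injection. Its surjectivity is immediate from $H^{t+n+1}(U\times\mathbb{A}^1,G)\cong H^{t+n+1}(U,G)=0$. So you do not even need the splitting coming from the section $u\mapsto(u,1)$ to conclude that $H^{t+n}(U\times\mathbb{G}_m,G)\neq 0$.
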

\begin{proof}
    Let $W_1=\KK^{t+1}\setminus V(IS)$ and $W_2=\KK^{t+1}\setminus V(z)$. Then $W=W_1\cup W_2$ and
    \[
    W_{12}\colonequals W_1\cap W_2=\KK^{t+1}\setminus V(z\cdot IS).
    \]
    By Proposition \ref{prop:ara-from-et}, $H_\et^i(U,G)=0$ for $i>t+n-1$. Hence, $H_\et^i(W_1,G)=0$ for $i>t+n-1$. Also by Proposition \ref{prop:ara-from-et}, $H^i_\et(W_2,G)=0$ for $i>t+1$. Therefore, by Proposition \ref{prop:et-mv},
    \[
    H_\et^{t+n+1}(W,G)\cong H_\et^{t+n}(W_{12},G).
    \]
    In order to prove that $H_\et^{t+n+1}(W,G)\neq 0$, it suffices to prove that $H_\et^{t+n}(W_{12},G)\neq0$. 

    Let $V=V(z\cdot IS)$. Observe that 
    \[
    V=(\KK^t\times\{0\})\sqcup (V(I)\times\KK^*). 
    \]
    As $V(I)\times\KK^*=(\KK^t\times \KK^*)\cap V$ is open in $V$, Proposition \ref{prop:les-triple} applies. Note that $\cptdim(\KK^t)=2t$. By Poincar\'e duality (Proposition \ref{prop:Poincareduality}), $\cptdim(U)=2t-(t+n-1)=t-n+1$. By Proposition \ref{prop:les-triple}, $\cptdim(V(I))=t-n$. Further, 
    \begin{align*}
    \cptdim(V(I)\times \KK^*)
    &=\cptdim(V(I))+\cptdim(\KK^*)\\
    &=t-n+1.
    \end{align*}
    By Proposition \ref{prop:les-triple}, we can now conclude that $\cptdim(V)=t-n+1$. As $V\sqcup W_{12}=\KK^{t+1}$, again by Proposition \ref{prop:les-triple}, $\cptdim(W_{12})=t-n+2$. Finally, by Poincar\'e duality, $H_\et^{t+n}(W_{12},G)\neq 0$. The last assertion follows from Proposition \ref{prop:ara-from-et}.
\end{proof}

Remark \ref{rmk:ara-realized-by-local-cohomology} and Proposition \ref{prop:ara-realized-by-etale-cohomology} indicate that the answer to Question \ref{question:ara} is positive for several families of ideals where arithmetic ranks have been computed in the literature \cite{BrunsSchwanzl, Barile, JPSW, BMSMP, BMMS}, including this article. In each of these cases, the lower bound is detected by a local or \'etale cohomological obstruction. 

We pose an alternate question. 

\begin{question}\label{question:alternate}
    Suppose $R\coloneqq \KK[x_1,\ldots,x_t]$, $S\coloneqq R[z]$ and $(f_1,\ldots,f_n)$ is a proper ideal of $S$ such that $z\in\sqrt{(f_1,f_2,\ldots,f_n)}$. For $f\in S$, let $f(0)$ denote the element of $R$ obtained by substituting $z=0$ in $f$. Is 
    \[
    \ara(f_1(0),\ldots,f_n(0))<n?\qedhere
    \]
\end{question}

\begin{rmk}\label{rmk:equivalence}
Questions \ref{question:ara} and \ref{question:alternate} are equivalent, as any counterexample to one question also provides a counterexample to the other. Indeed, suppose there exists an ideal $I\subsetneq R$ such that $\ara(IS+(z))=\ara(I)=n$. Let $f_1,\ldots,f_n$ be the minimal set-theoretic generators of $IS+(z)$. Then $z\in \sqrt{(f_1,\ldots,f_n)}$ and $\sqrt{I}=\sqrt{(f_1(0),\ldots,f_n(0))}$. Thus $\ara(f_1(0),\ldots,f_n(0))=\ara(I)=n.$ 

Conversely, suppose there exists an ideal $(f_1,\ldots,f_n)\subsetneq S$ such that $z\in \sqrt{(f_1,\ldots,f_n)}$ and $\ara(f_1(0),\ldots,f_n(0))=n$. Then, let $I=(f_1(0),\ldots,f_n(0))\subseteq R$. As $\sqrt{IS+(z)}=\sqrt{(f_1,\ldots,f_n)}$, we have
$\ara(IS+(z))=\ara(I)=n$.
\end{rmk}

\begin{rmk}\label{rmk:lcd}
    In the setting of Question \ref{question:alternate}, observe that $\lcd(f_1(0),\ldots,f_n(0))<n$. This is because if $\lcd(f_1(0),\ldots,f_n(0))=n$, then by the argument of Remark \ref{rmk:ara-realized-by-local-cohomology}, 
    \[
    \lcd(f_1(0),\ldots,f_n(0),z)=n+1,
    \]
    which contradicts $\sqrt{(f_1,\ldots,f_n)}=\sqrt{(f_1(0),\ldots,f_n(0),z)}$.
\end{rmk}

Remark \ref{rmk:lcd} suggests a potentially novel technique for drawing conclusions about cohomological dimension, as illustrated by the following examples.

\begin{exa}\label{exa:hypersurface}
    Consider the hypersurface 
    \[
    R\coloneqq \frac{\KK[h_1,h_2,h_3,h_4,a,b,c]}{(ah_1+h_1h_2+a^2h_3+bh_1h_3-ch_4)},
    \] and let $I=(h_1,h_2,h_3,h_4)\subseteq R$. As $h_1,h_2,h_3$ is a maximal regular sequence in $I$, $\height(I)=3$. In the ring $R[z]$, 
    \[
    z^5=(z^2-a)(z^3+az+h_1)-h_1(z^2+bz+h_2)+(a^2+h_1b)(z-h_3)+ch_4,
    \]
    which in turn implies
    \[
    \sqrt{IR[z]+(z)}=\sqrt{(z^3+az+h_1,z^2+bz+h_2,z-h_3,h_4)}.
    \]
    Hence, $\ara(IR[z]+(z))\leq 4$, which forces $\lcd(I)\leq 3$. As $I$ has height $3$, we conclude that  $\lcd(I)=3$.
\end{exa}

\begin{exa}\label{exa:general}
    Let $n\geq1$, $d\geq0$, $e\geq0$, and $1\leq t\leq d+e$ be integers. Consider
    \[
    R\coloneqq \frac{\KK[f_{i,j},g_{i,k}\mid 1\leq i\leq n,\; 0\leq j \leq d,\; 0\leq k\leq e]}{\bigg(\sum_{i=1}^n \sum_{j+k=s}f_{i,j}g_{i,k}- \delta_{s,t}\mid s=0,\ldots,d+e\bigg)}.
    \]
    Let $I=(f_{i,0}\mid 1\leq i \leq n)\subseteq R$. Observe that in $R[z]$, 
    \[
    z^t=\sum_{i=1}^n\bigg(\sum_{j=0}^d f_{i,j}z^j\bigg)\bigg(\sum_{k=0}^e g_{i,k}z^k\bigg).
    \]
    Hence, $z\in \sqrt{\big(\sum_{j=0}^d f_{i,j}z^j\mid 1\leq i\leq n\big)}$. This implies that 
    \[
    \sqrt{(f_{1,0},\ldots,f_{n,0},z)}=\sqrt{\bigg(\sum_{j=0}^d f_{i,j}z^j\mid 1\leq i\leq n\bigg)},
    \]
    which gives $\lcd(I)<n$ by the arguments of Remark \ref{rmk:ara-realized-by-local-cohomology}. (Even though Remark \ref{rmk:ara-realized-by-local-cohomology} is stated for polynomial rings, the arguments analogously hold for any $\KK$-algebra $R$.)
    For instance, if $d=e=t=1$ and $n=3$, we have
    \[R=\frac{\KK[a_1,a_2,a_3,b_1,b_2,b_3,c_1,c_2,c_3,d_1,d_2,d_3]}{(a_1c_1+a_2c_2+a_3c_3,\sum_{i=1}^3(a_id_i+b_ic_i)-1,b_1d_1+b_2d_2+b_3d_3)},\]
    and $I=(a_1,a_2,a_3)$. Observe that $a_1,a_2$ form a regular sequence in $R$, but $a_3$ is a zerodivisor on $R/(a_1,a_2)$. As $R$ is a complete intersection, $\height(I)=\grade(I)=2$ and as above, $\lcd(I)<3$. Hence, $\lcd(I)=2$.
\end{exa}

\section*{Acknowledgements}

We are grateful to Bhargav Bhatt, Jack Jeffries, Linquan Ma, and Bernd Ulrich for helpful discussions. A substantial portion of this work was completed when MB and VP were visiting the Institute for Advanced Study (IAS). They thank IAS for their warm hospitality and Linquan Ma for the invitation to visit.

MB was supported by NSF Grants DMS-2302430 and DMS-2100288, and by Simons Foundation Grant SFI-MPS-TSM-00012928.
UW was supported by NSF grant DMS-2100288, and by
Simons Foundation Collaboration Grant for Mathematicians \#580839 and SFI-MPS-TSM-00012928.

\subsection*{AI disclosure}
We used ChatGPT and Claude models to assist with literature searches and scanning the manuscript for typographical errors. We take full responsibility for the content and correctness of this article.

\bibliographystyle{amsalpha}
\bibliography{pfaff.bib}

@article {bmsmp,
    AUTHOR = {Batavia, Manav and Mohana Sundaram, Kesavan and Murray, Taylor
              and Pandey, Vaibhav},
     TITLE = {The arithmetic rank of the residual intersections of a
              complete intersection ideal},
   JOURNAL = {Int. Math. Res. Not. IMRN},
  FJOURNAL = {International Mathematics Research Notices. IMRN},
      YEAR = {2026},
    NUMBER = {10},
     PAGES = {Paper No. rnag091, 21},
      ISSN = {1073-7928,1687-0247},
   MRCLASS = {13C40},
  MRNUMBER = {5075936},
       DOI = {10.1093/imrn/rnag091},
       URL = {https://doi.org/10.1093/imrn/rnag091},
}

@article {RWW,
    AUTHOR = {Raicu, Claudiu and Weyman, Jerzy and Witt, Emily E.},
     TITLE = {Local cohomology with support in ideals of maximal minors and
              sub-maximal {P}faffians},
   JOURNAL = {Adv. Math.},
  FJOURNAL = {Advances in Mathematics},
    VOLUME = {250},
      YEAR = {2014},
     PAGES = {596--610},
      ISSN = {0001-8708,1090-2082},
   MRCLASS = {13D45 (13C40 14M12)},
  MRNUMBER = {3122178},
MRREVIEWER = {Kazem\ Khashyarmanesh},
       DOI = {10.1016/j.aim.2013.10.005},
       URL = {https://doi.org/10.1016/j.aim.2013.10.005},
}

@article {Barile,
    AUTHOR = {Barile, Margherita},
     TITLE = {Arithmetical ranks of ideals associated to symmetric and
              alternating matrices},
   JOURNAL = {J. Algebra},
  FJOURNAL = {Journal of Algebra},
    VOLUME = {176},
      YEAR = {1995},
    NUMBER = {1},
     PAGES = {59--82},
      ISSN = {0021-8693,1090-266X},
   MRCLASS = {13C40 (13D05 13F50)},
  MRNUMBER = {1345294},
MRREVIEWER = {Jos\'e\ F.\ Andrade},
       DOI = {10.1006/jabr.1995.1233},
       URL = {https://doi.org/10.1006/jabr.1995.1233},
}

@article {Witt2012,
    AUTHOR = {Witt, Emily E.},
     TITLE = {Local cohomology with support in ideals of maximal minors},
   JOURNAL = {Adv. Math.},
  FJOURNAL = {Advances in Mathematics},
    VOLUME = {231},
      YEAR = {2012},
    NUMBER = {3-4},
     PAGES = {1998--2012},
      ISSN = {0001-8708,1090-2082},
   MRCLASS = {13D45 (13C40)},
  MRNUMBER = {2964631},
MRREVIEWER = {Kazem\ Khashyarmanesh},
       DOI = {10.1016/j.aim.2012.07.001},
       URL = {https://doi.org/10.1016/j.aim.2012.07.001},
}

@incollection {HunekeUlrichGenericRI,
    AUTHOR = {Huneke, Craig and Ulrich, Bernd},
     TITLE = {Generic residual intersections},
 BOOKTITLE = {Commutative algebra ({S}alvador, 1988)},
    SERIES = {Lecture Notes in Math.},
    VOLUME = {1430},
     PAGES = {47--60},
 PUBLISHER = {Springer, Berlin},
      YEAR = {1990},
      ISBN = {3-540-52745-1},
   MRCLASS = {13C40 (13H10 14M12)},
  MRNUMBER = {1068323},
MRREVIEWER = {Matthew\ Miller},
       DOI = {10.1007/BFb0085536},
       URL = {https://doi.org/10.1007/BFb0085536},
}

@article {KustinUlrich,
    AUTHOR = {Kustin, Andrew R. and Ulrich, Bernd},
     TITLE = {If the socle fits},
   JOURNAL = {J. Algebra},
  FJOURNAL = {Journal of Algebra},
    VOLUME = {147},
      YEAR = {1992},
    NUMBER = {1},
     PAGES = {63--80},
      ISSN = {0021-8693,1090-266X},
   MRCLASS = {13C40 (13H10)},
  MRNUMBER = {1154674},
MRREVIEWER = {Aron\ Simis},
       DOI = {10.1016/0021-8693(92)90252-H},
       URL = {https://doi.org/10.1016/0021-8693(92)90252-H},
}

@article {BrunsSchwanzl,
    AUTHOR = {Bruns, Winfried and Schw\"anzl, Roland},
     TITLE = {The number of equations defining a determinantal variety},
   JOURNAL = {Bull. London Math. Soc.},
  FJOURNAL = {The Bulletin of the London Mathematical Society},
    VOLUME = {22},
      YEAR = {1990},
    NUMBER = {5},
     PAGES = {439--445},
      ISSN = {0024-6093,1469-2120},
   MRCLASS = {14M12},
  MRNUMBER = {1082012},
MRREVIEWER = {Gennady\ Lyubeznik},
       DOI = {10.1112/blms/22.5.439},
       URL = {https://doi.org/10.1112/blms/22.5.439},
}

@book {BrunsVetter,
    AUTHOR = {Bruns, Winfried and Vetter, Udo},
     TITLE = {Determinantal rings},
    SERIES = {Lecture Notes in Mathematics},
    VOLUME = {1327},
 PUBLISHER = {Springer-Verlag, Berlin},
      YEAR = {1988},
     PAGES = {viii+236},
      ISBN = {3-540-19468-1},
   MRCLASS = {13-02 (13C05 13H10 14M12)},
  MRNUMBER = {953963},
MRREVIEWER = {Piotr\ Pragacz},
       DOI = {10.1007/BFb0080378},
       URL = {https://doi.org/10.1007/BFb0080378},
}

@incollection {Kustin,
    AUTHOR = {Kustin, Andrew R.},
     TITLE = {Pfaffian identities, with applications to free resolutions,
              {DG}-algebras, and algebras with straightening law},
 BOOKTITLE = {Commutative algebra: syzygies, multiplicities, and birational
              algebra ({S}outh {H}adley, {MA}, 1992)},
    SERIES = {Contemp. Math.},
    VOLUME = {159},
     PAGES = {269--292},
 PUBLISHER = {Amer. Math. Soc., Providence, RI},
      YEAR = {1994},
      ISBN = {0-8218-5188-8},
   MRCLASS = {13D25 (13C40 15A69)},
  MRNUMBER = {1266186},
MRREVIEWER = {Jos\'e\ A.\ Hermida-Alonso},
       DOI = {10.1090/conm/159/01511},
       URL = {https://doi.org/10.1090/conm/159/01511},
}

@article {PeskineSzpiro,
    AUTHOR = {Peskine, Christian and Szpiro, Lucien},
     TITLE = {Dimension projective finie et cohomologie locale.
              {A}pplications \`a la d\'{e}monstration de conjectures de {M}.
              {A}uslander, {H}. {B}ass et {A}. {G}rothendieck},
   JOURNAL = {Inst. Hautes \'{E}tudes Sci. Publ. Math.},
  FJOURNAL = {Institut des Hautes \'{E}tudes Scientifiques. Publications
              Math\'{e}matiques},
    NUMBER = {42},
      YEAR = {1973},
     PAGES = {47--119},
      ISSN = {0073-8301},
   MRCLASS = {14B15 (13D05 13H10)},
  MRNUMBER = {374130},
MRREVIEWER = {Melvin Hochster},
       URL = {http://www.numdam.org/item?id=PMIHES_1973__42__47_0},
}

@book {Milne,
    AUTHOR = {Milne, James S.},
     TITLE = {{\'E}tale cohomology},
    SERIES = {Princeton Mathematical Series},
    VOLUME = {No. 33},
 PUBLISHER = {Princeton University Press, Princeton, NJ},
      YEAR = {1980},
     PAGES = {xiii+323},
      ISBN = {0-691-08238-3},
   MRCLASS = {14-02 (14F20 18F99)},
  MRNUMBER = {559531},
MRREVIEWER = {G.\ Horrocks},
}

@article{JPSW,
    author = {Jeffries, Jack and Pandey, Vaibhav and Singh, Anurag K. and Walther, Uli},
    title = {The arithmetic rank of determinantal nullcones},
    journal ={\url{https://arxiv.org/abs/2509.20470}},
    year = {2025}
}

@article {HunekeUlrichRI,
    AUTHOR = {Huneke, Craig and Ulrich, Bernd},
     TITLE = {Residual intersections},
   JOURNAL = {J. Reine Angew. Math.},
  FJOURNAL = {Journal f\"{u}r die Reine und Angewandte Mathematik. [Crelle's
              Journal]},
    VOLUME = {390},
      YEAR = {1988},
     PAGES = {1--20},
      ISSN = {0075-4102,1435-5345},
   MRCLASS = {13H10 (13C13 13D10 14B07)},
  MRNUMBER = {953673},
MRREVIEWER = {J\"{u}rgen\ Herzog},
       DOI = {10.1515/crll.1988.390.1},
       URL = {https://doi.org/10.1515/crll.1988.390.1},
}

@article {EisenbudEvans,
    AUTHOR = {Eisenbud, David and Evans, Jr., E. Graham},
     TITLE = {Every algebraic set in {$n$}-space is the intersection of
              {$n$}\ hypersurfaces},
   JOURNAL = {Invent. Math.},
  FJOURNAL = {Inventiones Mathematicae},
    VOLUME = {19},
      YEAR = {1973},
     PAGES = {107--112},
      ISSN = {0020-9910,1432-1297},
   MRCLASS = {14M10 (13B25)},
  MRNUMBER = {327783},
MRREVIEWER = {S.\ L.\ Kleiman},
       DOI = {10.1007/BF01418923},
       URL = {https://doi.org/10.1007/BF01418923},
}

@book {Silverman,
    AUTHOR = {Silverman, Joseph H.},
     TITLE = {The arithmetic of elliptic curves},
    SERIES = {Graduate Texts in Mathematics},
    VOLUME = {106},
   EDITION = {Second},
 PUBLISHER = {Springer, Dordrecht},
      YEAR = {2009},
     PAGES = {xx+513},
      ISBN = {978-0-387-09493-9},
   MRCLASS = {11-02 (11G05 11G20 14H52 14K15)},
  MRNUMBER = {2514094},
MRREVIEWER = {Vasil\cprime\ \=I.\ Andr\=\i\u ichuk},
       DOI = {10.1007/978-0-387-09494-6},
       URL = {https://doi.org/10.1007/978-0-387-09494-6},
}

@article {Artin-Nagata,
    AUTHOR = {Artin, Michael and Nagata, Masayoshi},
     TITLE = {Residual intersections in {C}ohen-{M}acaulay rings},
   JOURNAL = {J. Math. Kyoto Univ.},
  FJOURNAL = {Journal of Mathematics of Kyoto University},
    VOLUME = {12},
      YEAR = {1972},
     PAGES = {307--323},
      ISSN = {0023-608X},
   MRCLASS = {13H10},
  MRNUMBER = {301006},
MRREVIEWER = {Tadayuki\ Matsuoka},
       DOI = {10.1215/kjm/1250523522},
       URL = {https://doi.org/10.1215/kjm/1250523522},
}

@article {Kustin-Ulrich-Memoirs,
    AUTHOR = {Kustin, Andrew R. and Ulrich, Bernd},
     TITLE = {A family of complexes associated to an almost alternating map,
              with applications to residual intersections},
   JOURNAL = {Mem. Amer. Math. Soc.},
  FJOURNAL = {Memoirs of the American Mathematical Society},
    VOLUME = {95},
      YEAR = {1992},
    NUMBER = {461},
     PAGES = {iv+94},
      ISSN = {0065-9266,1947-6221},
   MRCLASS = {13D25 (13C14 13C40)},
  MRNUMBER = {1091668},
MRREVIEWER = {W.\ V.\ Vasconcelos},
       DOI = {10.1090/memo/0461},
       URL = {https://doi.org/10.1090/memo/0461},
}

@article {Pandey-Tarasova,
    AUTHOR = {Pandey, Vaibhav and Tarasova, Yevgeniya},
     TITLE = {Linkage and {$F$}-regularity of determinantal rings},
   JOURNAL = {Int. Math. Res. Not. IMRN},
  FJOURNAL = {International Mathematics Research Notices. IMRN},
      YEAR = {2024},
    NUMBER = {11},
     PAGES = {9323--9339},
      ISSN = {1073-7928,1687-0247},
   MRCLASS = {13A35 (13C40)},
  MRNUMBER = {4756115},
MRREVIEWER = {Mitra\ Koley},
       DOI = {10.1093/imrn/rnae040},
       URL = {https://doi.org/10.1093/imrn/rnae040},
}

@article {Smith,
    AUTHOR = {Smith, Karen E.},
     TITLE = {{$F$}-rational rings have rational singularities},
   JOURNAL = {Amer. J. Math.},
  FJOURNAL = {American Journal of Mathematics},
    VOLUME = {119},
      YEAR = {1997},
    NUMBER = {1},
     PAGES = {159--180},
      ISSN = {0002-9327,1080-6377},
   MRCLASS = {13A35 (13D45 13F40 14B05)},
  MRNUMBER = {1428062},
MRREVIEWER = {Ian\ M.\ Aberbach},
       URL =
        {http://muse.jhu.edu/journals/american_journal_of_mathematics/v119/119.1smith.pdf},
}

@article {Hochster-HunekeJAG,
    AUTHOR = {Hochster, Melvin and Huneke, Craig},
     TITLE = {Tight closure of parameter ideals and splitting in
              module-finite extensions},
   JOURNAL = {J. Algebraic Geom.},
  FJOURNAL = {Journal of Algebraic Geometry},
    VOLUME = {3},
      YEAR = {1994},
    NUMBER = {4},
     PAGES = {599--670},
      ISSN = {1056-3911,1534-7486},
   MRCLASS = {13A35 (13B21 13H10)},
  MRNUMBER = {1297848},
MRREVIEWER = {Ian\ M.\ Aberbach},
}

@article {Glassbrenner,
    AUTHOR = {Glassbrenner, Donna},
     TITLE = {Strong {$F$}-regularity in images of regular rings},
   JOURNAL = {Proc. Amer. Math. Soc.},
  FJOURNAL = {Proceedings of the American Mathematical Society},
    VOLUME = {124},
      YEAR = {1996},
    NUMBER = {2},
     PAGES = {345--353},
      ISSN = {0002-9939},
   MRCLASS = {13A35 (13H10)},
  MRNUMBER = {1291770},
MRREVIEWER = {Ian M. Aberbach},
       DOI = {10.1090/S0002-9939-96-03030-4},
       URL = {https://doi-org.ezproxy.lib.purdue.edu/10.1090/S0002-9939-96-03030-4},
}

@article {ChiriviMaffei,
    AUTHOR = {Chiriv\`i, Rocco and Maffei, Andrea},
     TITLE = {Pfaffians and shuffling relations for the spin module},
   JOURNAL = {Algebr. Represent. Theory},
  FJOURNAL = {Algebras and Representation Theory},
    VOLUME = {16},
      YEAR = {2013},
    NUMBER = {4},
     PAGES = {955--978},
      ISSN = {1386-923X,1572-9079},
   MRCLASS = {13C40 (13F50 14L35 14M12 14M15 58A17)},
  MRNUMBER = {3079788},
MRREVIEWER = {Toma\v z\ Ko\v sir},
       DOI = {10.1007/s10468-012-9341-7},
       URL = {https://doi.org/10.1007/s10468-012-9341-7},
}

@article {DeNegriSbarra,
    AUTHOR = {De Negri, Emanuela and Sbarra, Enrico},
     TITLE = {Gr\"obner bases of ideals cogenerated by {P}faffians},
   JOURNAL = {J. Pure Appl. Algebra},
  FJOURNAL = {Journal of Pure and Applied Algebra},
    VOLUME = {215},
      YEAR = {2011},
    NUMBER = {5},
     PAGES = {812--821},
      ISSN = {0022-4049,1873-1376},
   MRCLASS = {13C40 (13F50 13F55 13P10)},
  MRNUMBER = {2747221},
MRREVIEWER = {Marcel\ Morales},
       DOI = {10.1016/j.jpaa.2010.06.026},
       URL = {https://doi.org/10.1016/j.jpaa.2010.06.026},
}

@article {Chasles,
    AUTHOR = {Chasles, Michel},
     TITLE = {Construction des coniques qui satisfont \`a cinque conditions},
   JOURNAL = {C. R. Acad. Sci. Paris},
    VOLUME = {58},
      YEAR = {1864},
     PAGES = {297--308},
}

@article {HunekeRI,
    AUTHOR = {Huneke, Craig},
     TITLE = {Strongly {C}ohen-{M}acaulay schemes and residual
              intersections},
   JOURNAL = {Trans. Amer. Math. Soc.},
  FJOURNAL = {Transactions of the American Mathematical Society},
    VOLUME = {277},
      YEAR = {1983},
    NUMBER = {2},
     PAGES = {739--763},
      ISSN = {0002-9947,1088-6850},
   MRCLASS = {13H10 (14M05)},
  MRNUMBER = {694386},
MRREVIEWER = {Ulrich\ Orbanz},
       DOI = {10.2307/1999234},
       URL = {https://doi.org/10.2307/1999234},
}

@incollection {UlrichANproperties,
    AUTHOR = {Ulrich, Bernd},
     TITLE = {Artin-{N}agata properties and reductions of ideals},
 BOOKTITLE = {Commutative algebra: syzygies, multiplicities, and birational
              algebra ({S}outh {H}adley, {MA}, 1992)},
    SERIES = {Contemp. Math.},
    VOLUME = {159},
     PAGES = {373--400},
 PUBLISHER = {Amer. Math. Soc., Providence, RI},
      YEAR = {1994},
      ISBN = {0-8218-5188-8},
   MRCLASS = {13C40 (13A30 13H10)},
  MRNUMBER = {1266194},
MRREVIEWER = {Matthew\ Miller},
       DOI = {10.1090/conm/159/01519},
       URL = {https://doi.org/10.1090/conm/159/01519},
}

@article {CEU,
    AUTHOR = {Chardin, Marc and Eisenbud, David and Ulrich, Bernd},
     TITLE = {Hilbert functions, residual intersections, and residually
              {${\rm S}_2$} ideals},
   JOURNAL = {Compositio Math.},
  FJOURNAL = {Compositio Mathematica},
    VOLUME = {125},
      YEAR = {2001},
    NUMBER = {2},
     PAGES = {193--219},
      ISSN = {0010-437X,1570-5846},
   MRCLASS = {13D40 (13C40 13H15 14M06)},
  MRNUMBER = {1815393},
MRREVIEWER = {Philippe\ Gimenez},
       DOI = {10.1023/A:1002442111114},
       URL = {https://doi.org/10.1023/A:1002442111114},
}

@article {CNT,
    AUTHOR = {Chardin, Marc and Na\'eliton, Jos\'e{} and Tran, Quang Hoa},
     TITLE = {Cohen-{M}acaulayness and canonical module of residual
              intersections},
   JOURNAL = {Trans. Amer. Math. Soc.},
  FJOURNAL = {Transactions of the American Mathematical Society},
    VOLUME = {372},
      YEAR = {2019},
    NUMBER = {3},
     PAGES = {1601--1630},
      ISSN = {0002-9947,1088-6850},
   MRCLASS = {13C40 (13D02 13D40 13H10)},
  MRNUMBER = {3976571},
MRREVIEWER = {Yu\ Xie},
       DOI = {10.1090/tran/7607},
       URL = {https://doi.org/10.1090/tran/7607},
}

@article {EisenbudUlrichRI,
    AUTHOR = {Eisenbud, David and Ulrich, Bernd},
     TITLE = {Residual intersections of {$2\times n$} determinantal ideals},
   JOURNAL = {Algebr. Geom. Phys.},
  FJOURNAL = {Algebraic Geometry and Physics},
    VOLUME = {2},
      YEAR = {2025},
    NUMBER = {2},
     PAGES = {309--337},
      ISSN = {2998-100X,2998-1018},
   MRCLASS = {14M12 (13C40 13D02 13H10 13P20 14M06 14M10)},
  MRNUMBER = {5083940},
       DOI = {10.4310/agp.251009144743},
       URL = {https://doi.org/10.4310/agp.251009144743},
}

@article{BMMS,
    author = {Batavia, Manav and Maithani, Aryaman and Mohana Sundaram, Kesavan},
    title = {The arithmetic rank of nullcones of classical invariant rings},
    journal = {\url{https://arxiv.org/abs/2608.29445}},
    year = {2026}
}

@article{CowsikNori,
  author = {Cowsik, R. C. and Nori, Madhav V.},
  title = {Affine curves in characteristic {$p$} are set theoretic complete intersections},
  journal = {Invent. Math.},
  fjournal = {Inventiones Mathematicae},
  volume = {45},
  year = {1978},
  number = {2},
  pages = {111--114},
  issn = {0020-9910,1432-1297},
  mrclass = {14H99 (14M10)},
  mrnumber = {472835},
  mrreviewer = {Knud\ L\o nsted},
  doi = {10.1007/BF01390268},
  url = {https://doi.org/10.1007/BF01390268},
}

@article{HartshorneCI,
  author = {Hartshorne, Robin},
  title = {Complete intersections in characteristic {$p>0$}},
  journal = {Amer. J. Math.},
  fjournal = {American Journal of Mathematics},
  volume = {101},
  year = {1979},
  number = {2},
  pages = {380--383},
  issn = {0002-9327,1080-6377},
  mrclass = {14M10 (14H45)},
  mrnumber = {527998},
  mrreviewer = {Ezio\ Stagnaro},
  doi = {10.2307/2373984},
  url = {https://doi.org/10.2307/2373984},
}

@article {LyubeznikBAMS,
    AUTHOR = {Lyubeznik, Gennady},
     TITLE = {The number of equations needed to define an algebraic set},
   JOURNAL = {Bull. Amer. Math. Soc. (N.S.)},
  FJOURNAL = {American Mathematical Society. Bulletin. New Series},
    VOLUME = {19},
      YEAR = {1988},
    NUMBER = {1},
     PAGES = {273--276},
      ISSN = {0273-0979,1088-9485},
   MRCLASS = {14M10 (14C15 14C25)},
  MRNUMBER = {940486},
MRREVIEWER = {N.\ Mohan Kumar},
       DOI = {10.1090/S0273-0979-1988-15638-8},
       URL = {https://doi.org/10.1090/S0273-0979-1988-15638-8},
}

@article{BarileLyubeznik,
  author = {Barile, Margherita and Lyubeznik, Gennady},
  title = {Set-theoretic complete intersections in characteristic {$p$}},
  journal = {Proc. Amer. Math. Soc.},
  fjournal = {Proceedings of the American Mathematical Society},
  volume = {133},
  year = {2005},
  number = {11},
  pages = {3199--3209},
  issn = {0002-9939,1088-6826},
  mrclass = {14M10 (14F20 14M25)},
  mrnumber = {2160181},
  mrreviewer = {N.\ Mohan Kumar},
  doi = {10.1090/S0002-9939-05-07951-7},
  url = {https://doi.org/10.1090/S0002-9939-05-07951-7},
}

@article {BuchsbaumEisenbudAJM,
    AUTHOR = {Buchsbaum, David A. and Eisenbud, David},
     TITLE = {Algebra structures for finite free resolutions, and some
              structure theorems for ideals of codimension {$3$}},
   JOURNAL = {Amer. J. Math.},
  FJOURNAL = {American Journal of Mathematics},
    VOLUME = {99},
      YEAR = {1977},
    NUMBER = {3},
     PAGES = {447--485},
      ISSN = {0002-9327,1080-6377},
   MRCLASS = {13D15},
  MRNUMBER = {453723},
MRREVIEWER = {M.\ Nagata},
       DOI = {10.2307/2373926},
       URL = {https://doi.org/10.2307/2373926},
}

@article{HartshorneFlat,
  author = {Hartshorne, Robin},
  title = {A property of {$A$}-sequences},
  journal = {Bull. Soc. Math. France},
  fjournal = {Bulletin de la Soci\'et\'e{} Math\'ematique de France},
  volume = {94},
  year = {1966},
  pages = {61--65},
  issn = {0037-9484},
  mrclass = {13.95},
  mrnumber = {209279},
  mrreviewer = {L.\ J.\ Ratliff, Jr.},
  url = {http://www.numdam.org/item?id=BSMF_1966__94__61_0},
}

@article {LyubeznikAnnals,
    AUTHOR = {Lyubeznik, Gennady},
     TITLE = {{\'E}tale cohomological dimension and the topology of algebraic
              varieties},
   JOURNAL = {Ann. of Math. (2)},
  FJOURNAL = {Annals of Mathematics. Second Series},
    VOLUME = {137},
      YEAR = {1993},
    NUMBER = {1},
     PAGES = {71--128},
      ISSN = {0003-486X,1939-8980},
   MRCLASS = {14F20 (14B15 14M07)},
  MRNUMBER = {1200077},
MRREVIEWER = {R.\ T.\ Hoobler},
       DOI = {10.2307/2946619},
       URL = {https://doi.org/10.2307/2946619},
}

@article{Pandey,
AUTHOR = {Pandey,Vaibhav},
TITLE = {{$F$}-purity and the {$F$}-pure threshold as invariants of linkage},
JOURNAL ={\url{https://arxiv.org/abs/2406.05323}},
YEAR = {2024},
}

@article {Valla,
    AUTHOR = {Valla, Giuseppe},
     TITLE = {On determinantal ideals which are set-theoretic complete
              intersections},
   JOURNAL = {Compositio Math.},
  FJOURNAL = {Compositio Mathematica},
    VOLUME = {42},
      YEAR = {1980/81},
    NUMBER = {1},
     PAGES = {3--11},
      ISSN = {0010-437X,1570-5846},
   MRCLASS = {14M10 (13B25)},
  MRNUMBER = {594479},
       URL = {http://www.numdam.org/item?id=CM_1980__42_1_3_0},
}
\end{document}